\newcommand{\id}{\operatorname{id}}
\newcommand{\Ord}{\operatorname{O}}
\newcommand{\diag}{\operatorname{diag}}
\newcommand{\range}{\mathcal{R}}
\newcommand{\Nn}{{\mathbb N}}
\newcommand{\Rr}{{\mathbb R}}
\newcommand{\eps}{\varepsilon}
\newcommand{\Hh}{\mathcal{H}}
\newcommand{\Kcal}{\mathcal{K}}
\newcommand{\resa}{\prescript{\rm a \! }{}{r}}
\newcommand{\mua}{\prescript{\rm a \! }{}{\mu}}
\theoremstyle{plain}
\newtheorem{Thm}{Theorem}[section]
\newtheorem{Cor}[Thm]{Corollary}
\newtheorem{Lem}[Thm]{Lemma}
\theoremstyle{remark}
\newtheorem{Rem}[Thm]{Remark}
\theoremstyle{definition}
\begin{document}

\title{Accelerated Landweber methods based on co-dilated orthogonal polynomials}
\author{Wolfgang Erb
\thanks{Institute of Mathematics, University of Lübeck,
Ratzeburger Allee 160, 23562 Lübeck, Germany. erb@math.uni-luebeck.de}}

\date{18.12.2012}

\maketitle
\vspace{-10mm}
\begin{abstract}
In this article, we introduce and study accelerated Landweber methods for linear ill-posed problems obtained by an alteration of the coefficients in the three-term recurrence relation
of the $\nu$-methods. The residual polynomials of the semi-iterative methods under consideration are linked to a family of co-dilated ultraspherical polynomials. This connection makes it possible
to increase the decay of the residual polynomials at the origin by means of a dilation parameter. This increased decay has advantages when solving linear ill-posed equations 
in which the spectrum of the involved operators is clustered at the origin. The convergence order of the new semi-iterative methods turns out to be the same as the convergence order of the original $\nu$-methods. The new algorithms are tested numerically and a simple adaptive scheme is developed in which an optimal dilation parameter is computed. 
\end{abstract}
{\bf AMS Subject Classification}(2010): 65F10, 42C05\\[0.5cm]
{\bf Keywords: co-dilated orthogonal polynomials, accelerated Landweber method, semi-iterative methods, $\nu$-methods}

\section{Introduction}

The aim of this article is to present and to investigate specific accelerated Landweber schemes that constitute an alternative to the well-known $\nu$-methods and which, depending on the given data, are able to display an improved performance. For the necessary notation, we give first a short summary about linear ill-posed problems, the Landweber iteration and semi-iterative methods. The theoretical background on ill-posed problems and their numerical solution is mainly taken from the monographs \cite{EnglHankeNeubauer} and \cite{Rieder}. Further introductions can be found in \cite{Groetsch,KaltenbacherNeubauerScherzer, Kirsch, Louis} and the references therein. 

In the Hilbert space setting of linear ill-posed problems, one considers a bounded linear operator $A: \Hh_1 \to \Hh_2$ between two Hilbert spaces $\Hh_1$ and $\Hh_2$ and the solutions of the linear equation
\begin{equation} \label{equation-illposedLGS}
A f = g,\qquad f \in \Hh_1, \; g \in \range(A).
\end{equation}
If the range $\range(A)$ of $A$ is not a closed subspace of $\Hh_2$, the solution $f$ of \eqref{equation-illposedLGS} does not depend continuously on the initial data $g$ and the linear system
\eqref{equation-illposedLGS} is called ill-posed. To solve such ill-posed equations, Landweber suggested in \cite{Landweber1951} the iterative scheme 
\begin{equation} \label{equation-Landeber}
f_{n+1} = f_n + 2 \omega  A^*( g - A f_n), \qquad n \geq 0,
\end{equation}
to compute the minimum norm solution of the normal equation $A^* A f = A^* g$. Here, 
the operator $A^*: \Hh_2 \to \Hh_1$ denotes the adjoint of $A$ and $\omega > 0$ is a relaxation parameter.

For the initial vector $f_0 = 0$, the Landweber iterate $f_{n+1}$ belongs to the Krylov space
\[ \Kcal_n(A^* A, A^*g) := \left\{A^*g, (A^*A) A^*g, (A^*A)^2 A^*g, \cdots, (A^*A)^{n} A^*g\right\} \subset \Hh_1.\]
and can be expanded as
\begin{equation} \label{equation-Landweberpolynomial}
f_{n+1} = p_n(\omega A^*A) \omega A^* g \qquad \text{with} \qquad p_n(y) = \frac{1-(1-2y)^{n+1}}{y}.
\end{equation}
Then, for the minimum norm solution $f$ of $A^* A f = A^* g$, we get (cf. \cite[formula (6.12)]{EnglHankeNeubauer}):
\begin{equation} \label{equation-residualpolynomial}
 f - f_{n+1} = r_{n+1}(\omega A^* A) f \qquad \text{with} \qquad r_{n+1}(y) = (1 - 2 y)^{n+1}.
\end{equation}
We remark, that in contrast to other references we use an additional scaling factor $2$ in the definitions \eqref{equation-Landweberpolynomial}
and \eqref{equation-residualpolynomial} of the Landweber polynomials $p_n$ and $r_n$. This particular scaling ensures that the residual polynomials $r_{n}(y)$ converge 
pointwise to zero precisely in the interval $]0,1[$. So, if $\omega \|A^* A\| < 1$ holds, the spectral decomposition of the positive semidefinite operator 
$A^* A$ ensures the convergence of the iterate $f_n$ to $f$ (see \cite[Theorem 4.1 and Theorem 6.1]{EnglHankeNeubauer}). 

A disadvantage of the Landweber scheme is the slow convergence of $r_n(y) = (1 - 2 y)^{n}$ to zero if $y$ is close to zero. To circumvent this drawback, it is favorable to substitute the polynomials $p_n$ and $r_{n}$ of the Landweber iteration with more suitable ones. These more sophisticated iteration schemes are commonly known as semi-iterative or accelerated Landweber methods (see \cite{Egger2006}, \cite[Chapter 6]{EnglHankeNeubauer}, \cite{Hanke1991}, \cite{Hanke1996}, \cite[Section 5.2]{Rieder} and \cite{Schock1987}).
In principle, in formula \eqref{equation-Landweberpolynomial} one could use every sequence $p_n$ of polynomials with exact degree $n$
as a semi-iterative method. However, in order to evaluate the iteration polynomials $p_n$ 
and the respective residual polynomials $r_{n+1}(y) := 1 - y p_n(y)$ in a cost-effective way, it is advantageous to use sequences of orthogonal polynomials (see \cite{Hanke1991}).

If $P_n(x)$, $n \in \Nn_0$, denote monic polynomials of degree $n$ orthogonal with respect to a weight function $w$ supported on the reference interval $[-1,1]$, the polynomials $P_n$
can be evaluated cheaply by the three-term recurrence relation (cf. \cite[I. Theorem 4.1]{Chihara})
\begin{equation} \label{equation-3termrecurrencegeneral} 
P_{n+1}(x) = (x - \alpha_n) P_n(x) - \beta_n P_{n-1}(x), \quad P_0(x) = 1, \quad P_1(x) = x - \alpha_0.
\end{equation}
It is well-known that the coefficients $\alpha_n \in \Rr$ and $\beta_n > 0$ are uniquely given and that $P_n(1) > 0$ holds for all $n \in \Nn_0$. 
Now, if we define the residual polynomials $r_n$ on $[0,1]$ by
\begin{equation} \label{equation-residualnumethod} r_n(y) := \frac{P_n (1-2 y)}{P_n(1)},\end{equation}
the constraint $r_n(0) = 1$ is satisfied and \eqref{equation-3termrecurrencegeneral} yields the following recurrence formula:
\begin{align} \label{equation-Residualpolynomials} 
r_{n+1}(y) &= (1 - \alpha_n - 2 y) \frac{  P_{n}(1) }{P_{n+1}(1)} r_n(y) - \beta_n \frac{  P_{n-1}(1) }{P_{n}(1)} \frac{  P_{n}(1) }{P_{n+1}(1)} r_{n-1}(y), \quad n \geq 1,\\
r_{0}(y) &= 1, \qquad r_1(y) = 1 - \frac{2}{1-\alpha_0} y. \notag 
\end{align}
Also the coefficients $\mu_{n+1} = \frac{P_n(1)}{P_{n+1}(1)}$ can be computed recursively via \eqref{equation-3termrecurrencegeneral} as
\[\mu_{n+1} = \frac{P_n(1)}{P_{n+1}(1)} = \frac{P_n(1)}{(1-\alpha_n)P_n(1) - \beta_n P_{n-1}(1)} = \frac{1}{(1-\alpha_n)-\beta_n \mu_n}.\]
The resultant recursion formula for the iteration polynomials $p_n(x) = \frac{1 - r_{n+1}(x)}{x}$ yields the following semi-iterative algorithm (stated with a slightly different notation in \cite{Hanke1991}):

\begin{algorithm} 
\caption{Semi-iterative method based on monic orthogonal polynomials on $[-1,1]$}
\label{algorithm-2}

\begin{algorithmic}[H]
\STATE $\mu_1 = \frac{1}{1 - \alpha_0}$
\STATE $f_0 = 0$, $f_1 = 2 \mu_1 \; \omega A^* g $
\WHILE {(stopping criterion false)} 
\STATE $\mu_{n+1} = \frac{1}{1-\alpha_n - \beta_n \mu_n }$ \\[2mm]
\STATE $f_{n+1} = f_n + ((1-\alpha_n) \mu_{n+1}-1) ( f_n - f_{n-1}) + 2 \mu_{n+1} \; \omega  A^*( g - A f_n)$\\[2mm]
\STATE $n \to n+1$
\ENDWHILE
\end{algorithmic}
\end{algorithm}

Setting $\alpha_n = \beta_n = 0$, Algorithm \ref{algorithm-2} describes the Landweber iteration \eqref{equation-Landeber}. Other well-known examples 
of Algorithm \ref{algorithm-2} are based on the Chebyshev polynomials $U_n$ of the second kind and the Jacobi polynomials $P_n^{(\nu - 1/2,-1/2)}$, $\nu > 0$. In the first case, the scheme is known as Chebyshev method of Stiefel, in the second case as the $\nu$-methods of Brakhage \cite{Brakhage1987}. For $\nu = 1$, the scheme is known as Chebyshev method of Nemirovskii and Polyak (see \cite{NemirovskiiPolyak1984}). We remark that in this article the parameter $\nu$ of the $\nu$-methods is set twice as large as normally used in the literature. In this way, the parameter $\nu$ coincides with the parameter of the ultraspherical polynomials. As a stopping criterion for Algorithm \ref{algorithm-2}, several choices are possible (see \cite{Hanke1991}). However, the most common one is certainly the discrepancy principle of Morozov and some generalizations of it. 

To analyse the convergence of Algorithm \ref{algorithm-2}, we consider smooth solutions $f$ of \eqref{equation-illposedLGS} in subspaces $X_s := \range((A^*A)^{\frac{s}{2}})$, $s \geq 0$,
of the Hilbert space $\Hh_1$ and the moduli of convergence
\[\eps_s(n) = \sup_{y \in [0,1]} |y^{\frac{s}{2}} r_n(y)|, \qquad \eps_s^S(n) = \sup_{y \in [0,1]} |y^{\frac{s}{2}}(1-y)^{\frac{s}{2}} r_n(y)|, \qquad n \in \Nn. \]
We will use the modulus $\eps_s(n)$ if the residual polynomial $r_n(y)$ converges to zero at $y = 1$ and $\eps_s^S(n)$ otherwise. 
In the first case, if $f = (\omega A^*A)^{\frac{s}{2}} h \in X_s$ holds with $h \in \Hh_1$ and $\omega \| A^* A \| \leq 1$, the spectral theorem yields the error estimate (see \cite[Theorem 3.2]{Hanke1991})
\[ \|f - f_n \| \leq \eps_s(n) \|h\|.\]
In the second case, we assume that $\omega \| A^* A \| < 1$ holds such that $\id - \omega A^* A$ is an invertible operator on $\Hh_1$. Then, a similar argumentation as in \cite[Theorem 3.2]{Hanke1991} yields the bound
\[ \|f - f_n \| \leq \frac{\eps_s^S(n)}{(1-\omega \|A^*A\|)^\frac{s}{2}} \|h\|. \]
The convergence rate of the Landweber method is known to be of order $\eps_s^S(n) = \Ord(n^{-\frac{s}{2}})$, while the $\nu$-methods reveal $\eps_{s}(n) = \Ord(n^{-s})$ for $0 < s \leq \nu$ (see
\cite[Chapter 6]{EnglHankeNeubauer},\cite{Hanke1991}). 

The main goal of this article is to find and investigate orthogonal polynomials $P_n$ with a priori given recurrence coefficients $\alpha_n$ and $\beta_n$
such that the resulting semi-iterative scheme in Algorithm \ref{algorithm-2} improves the performance of the $\nu$-methods. More precisely, under some general assumptions on the given data $A$ and $g$, we want to determine new semi-iterative methods in which the error $\|A f_n - g\|$ in Algorithm \ref{algorithm-2} is smaller compared to the $\nu$-methods. If Algorithm \ref{algorithm-2} is
stopped according to the discrepancy principle, this will result in an earlier termination of the iteration. 
However, a well-known result (cf. \cite[Theorem 4.1]{Hanke1991}, \cite{Hanke1996}) states that the convergence order $\eps_{s}(n) = \Ord(n^{-s})$, $0 < s \leq \nu$, of the $\nu$-methods is already optimal and that it is not possible to obtain semi-iterative methods with a better order. 

Searching for new semi-iterative schemes, we focus therefore on a different important aspect: the decay of the residual polynomials $r_n(y)$ at $y = 0$. For linear ill-posed problems, the operator $A^*A$ on $\Hh_1$ is typically compact and its spectrum is clustered at the origin $y=0$. 
Thus, if we assume that most of the spectrum of $A^*A$ is concentrated at $y=0$, the error $\|A f_n - g\|$ in Algorithm \ref{algorithm-2} depends strongly on how fast the residual polynomials $r_n(y)$ decay to zero in the neighborhood of $y = 0$. For this reason, various concepts of fast decaying polynomials have already been studied, see \cite{Hanke1996} and the references therein.

If the residual polynomials $r_n$ are orthogonal, the decay of $r_n(y)$ at $y=0$ is directly linked to the location of the smallest root of $r_n(y)$ in the interval $[0,1]$. The closer to the origin the smallest root is, the faster $r_n(y)$ decays at $y=0$. This link is now used to construct polynomials $r_n$ with a faster decay at $y=0$ than the residual polynomials of the $\nu$-methods. To this end, we alter particular coefficients in the recurrence relation of the orthogonal polynomials linked to the $\nu$-methods. This altering leads directly to a family of co-dilated orthogonal polynomials, many of whose characteristics are known in the literature, see \cite{DiniMaroniRonveaux1989, IfantisSiafarikas1995-2, MarcellanDehesaRonveaux1990, RonveauxBelmehdiDiniMaroni1990, Slim1988}. Based on these co-dilated orthogonal polynomials, we will construct the new semi-iterative methods and investigate some of their properties.

The main idea of this article is explained in more detail in the next section on the basis of the Chebyshev polynomials.
In Section $3$, the theoretical fundamentals of the co-dilated orthogonal polynomials are laid and some properties of their extremal roots are investigated. 
Section $4$ is devoted to the particular family of co-dilated ultraspherical polynomials and their properties. In Section $5$, the transition from the co-dilated ultraspherical polynomials to the co-dilated $\nu$-methods is illustrated. The main results of Section $4$ and $5$, formulated in Theorem \ref{Theorem-convergenceultraspherical} and Corollary \ref{Corollary-convergencenonsymmetric}, state that under certain conditions on the dilation parameter the convergence order of the new semi-iterative methods is the same as for the $\nu$-methods. In Section $6$, it is shown how for the co-dilated $1$-method the dilation parameter can be fitted optimally to minimize the error $\|A f_n - g\|$. Finally, in the last section some numerical tests are conducted. 

In this article, the coefficients $\alpha_n$ and $\beta_n$ in Algorithm \ref{algorithm-2} are always a priori given. Disabling this constraint, a powerful alternative is given by the method of conjugate gradients where $\alpha_n$ and $\beta_n$ depend on $A$ and $g$ (see \cite[Chapter 7]{EnglHankeNeubauer}, \cite{Fischer}, \cite[Section 5.3]{Rieder}). It is well-known that the iterate $f_{n}$ of the cg-algorithm minimizes the error $\| A f_{n} - g \|$ in the Krylov space $\Kcal_{n-1}( A^*A,  A^*g)$. On the other hand, the cg-iteration has a multifarious convergence behavior that makes it harder to handle as a regularization tool than the $\nu$-methods. A deep analysis of the cg-algorithm as a regularization tool and a comparison with the $\nu$-methods can be found in \cite[Chapter 7]{EnglHankeNeubauer}, \cite{Hanke1991} and \cite[Section 5.3]{Rieder}.

\section{Co-dilated Chebyshev polynomials}

We illustrate the idea of this article on the basis of the Chebyshev polynomials of the second kind.
For $x = \cos t$, $t \in [0,\pi]$, the monic Chebyshev polynomials of the first and the second kind are explicitly given as (cf. \cite[p. 28]{Gautschi})
\begin{align*}
T_n(\cos t) &= \frac{1}{2^{n-1}} \cos n t, \qquad U_n(\cos t) = \frac{1}{2^n}\frac{\sin (n+1)t}{\sin t}. 
\end{align*}
Further, we consider linear combinations of $U_n$ and $T_n$, i.e.
\begin{align} \label{equation-definitioncodilatedchebyshev} 
U_n^*(x) \equiv U_n^*(x,\lambda) &:= (2 - \lambda) U_n(x) + (\lambda-1) T_n(x) \\
                                 &\phantom{:}= \lambda U_n(x) + (1-\lambda) x U_{n-1}(x), \notag \\
                                 &\phantom{:}= U_n(x) + \frac{1-\lambda}{4} U_{n-2}(x), \qquad \lambda \in \Rr. \notag
\end{align}
The last two identities in \eqref{equation-definitioncodilatedchebyshev} follow from simple trigonometric conversions.  
In order to use these polynomials in a semi-iterative scheme, we introduce according to \eqref{equation-residualnumethod} the residual polynomials
\[r_n^*(y) \equiv r_n^*(y,\lambda) = \frac{U_n^*(1-2y)}{U_n^*(1)}, \qquad y \in [0,1]. \]
In Figure \ref{figure-Chebyshevpolynomials}, the normalized polynomials $U_6(x)/U_6(1)$, $T_6(x)/T_6(1)$ and $U_6^*(x)/U_6^*(1)$ with $\lambda = 1.5$ are plotted. 
For $-1 < x < 1$, the polynomials $U_n(x)/U_n(1)$ converge pointwise to zero as $n \to \infty$. Thus, also the residual polynomials $r_n^*(y,1)$ converge pointwise to zero for $y \in ]0,1[$. 
They form a convergent semi-iterative scheme, the so-called Chebyshev method of Stiefel (cf. \cite[p. 116]{Rieder}). 
On the other hand, the polynomials $T_n(x)/T_n(1)$ do not converge pointwise to zero on $[-1,1]$. Nevertheless, the largest root of $T_n$ is much closer to $x = 1$ than the corresponding root of the polynomial $U_n$. This implies that the smallest root of the respective residual polynomial $r_n^*(y,2)$ is closer to $y = 0$ and that $r_n^*(y,2)$ decays faster at $y = 0$ than the polynomial $r_n^*(y,1)$.

\begin{figure} 
\begin{center} 
        \begin{minipage}[t]{0.32\linewidth} 
        \begin{center}
        \includegraphics[angle=-90,width=\linewidth]{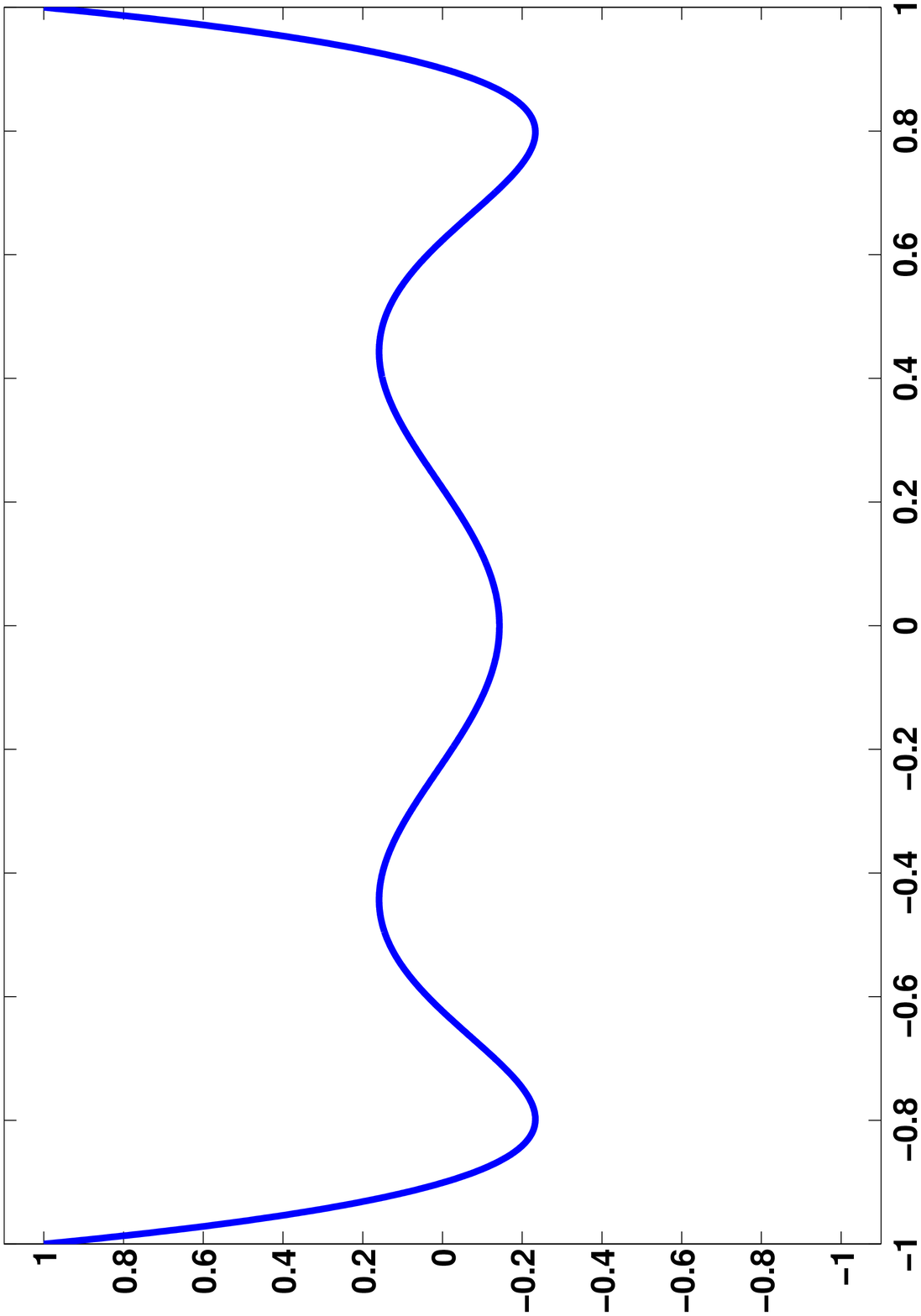}\\ $U_6(x)/U_6(1)$ 
        \end{center}
        \end{minipage}
	\begin{minipage}[t]{0.32\linewidth}
        \begin{center}
        \includegraphics[angle=-90,width=\linewidth]{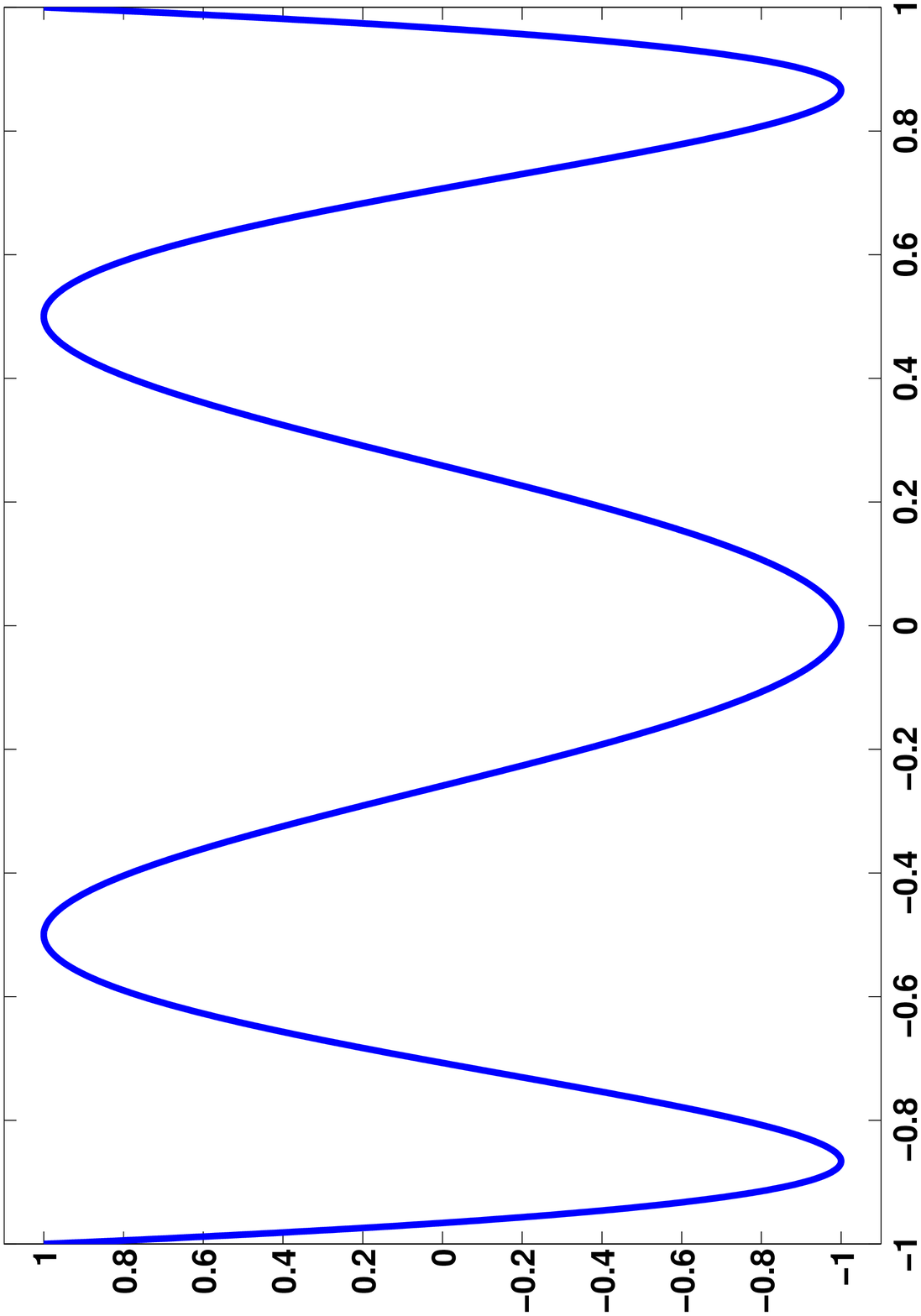}\\ $T_6(x)/T_6(1)$ 
        \end{center}
	\end{minipage}
	\begin{minipage}[t]{0.32\linewidth}
        \begin{center}
        \includegraphics[angle=-90,width=\linewidth]{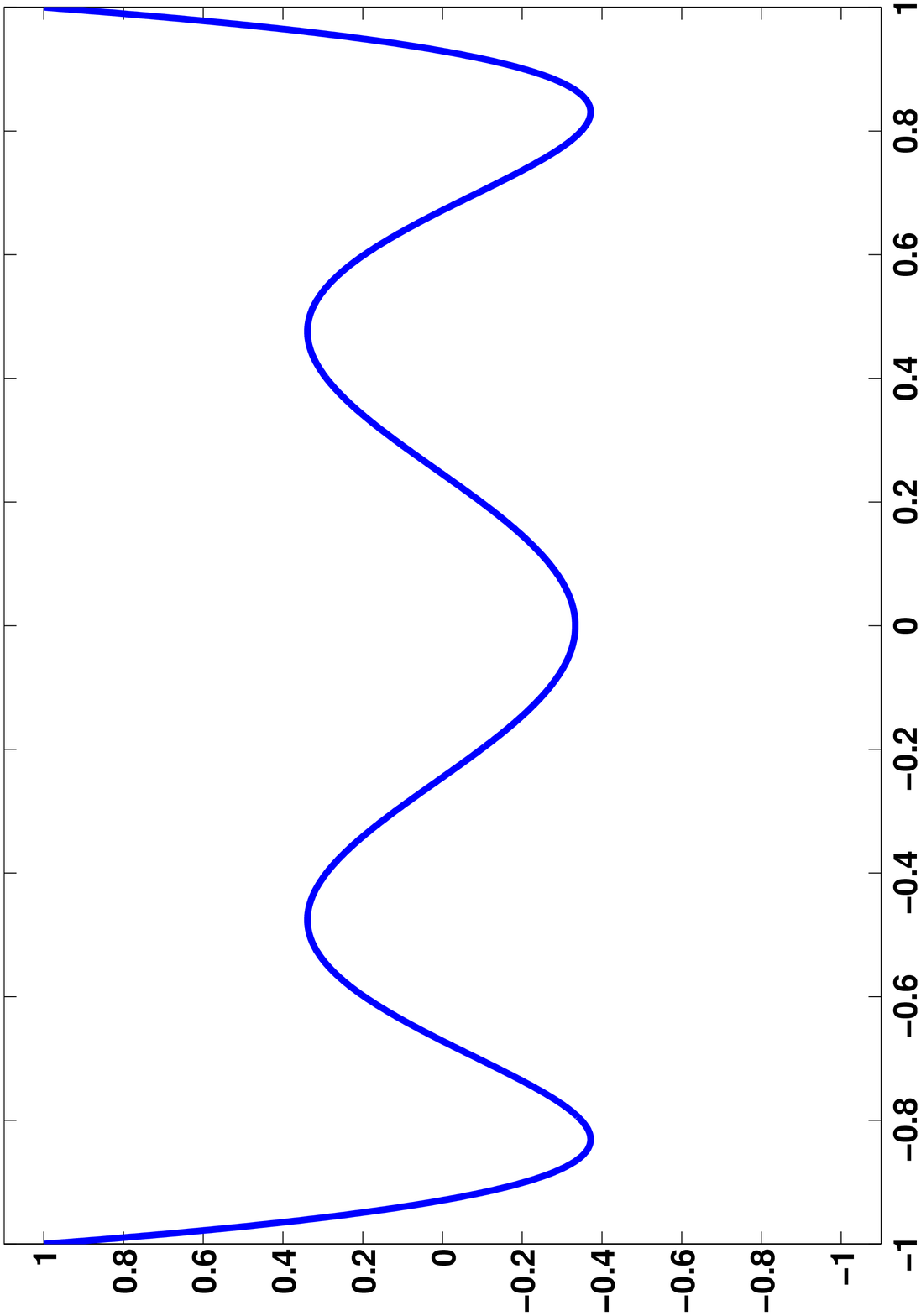}\\ $U_6^*(x)/U_6^*(1)$, $\lambda = 1.5$. 
        \end{center}
	\end{minipage}
\caption{The Chebyshev polynomials $U_6/U_6(1)$, $T_6/T_6(1)$ and the linear combination $U_6^*/U_6^*(1)$, $\lambda = 1.5$, on the interval $[-1,1]$. }
\label{figure-Chebyshevpolynomials}
\end{center}
\end{figure}

Thus, although not giving a convergent iterative scheme, the residual polynomial $r_n^*(y,2)$ has the favorable property to decay fast at the origin. In order to combine both requests, a convergent scheme
and a fast decay at $y=0$, we consider now the linear combinations $U_n^*$ of $U_n$ and $T_n$. With the identities (see \cite[Section A.2, A.3]{Boyd})
\begin{equation} 
\label{equation-identitiesat1} U_n(1) = \frac{n+1}{2^n}, \quad T_n(1) = \frac{1}{2^{n-1}}, \quad U_n'(1) = \frac{ n(n+1)(n+2) }{ 2^{n} 3}, \quad  T_n'(1) = \frac{n^2}{2^{n-1}},
\end{equation}
we get by a simple computation the following formula for the derivative of $U_n^*$ at $x = 1$:
\begin{align*}
\frac{{U_n^{*}}'(1)}{U_n^*(1)} &= \frac{(2-\lambda) U_n'(1) + (\lambda-1) T_n'(1)}{(2-\lambda) U_n(1) + (\lambda-1) T_n(1)} \\
&= \frac{(2-\lambda) \frac{n(n+1)(n+2)}{3} + 2(\lambda-1) n^2}{(2-\lambda) (n+1) + 2(\lambda-1)} = \frac{n^2+2}{3} + \frac{2}{3}\frac{\lambda(n^2-1)}{(2-\lambda)n+\lambda}.
\end{align*}
Hence, for $\lambda <2 $, ${U_n^{*}}'(1)/U_n^*(1)$ is an increasing function of the parameter $\lambda$. Therefore, also the decay ${r_n^*}'(0) = - {U_n^{*}}'(1)/U_n^*(1)$ of the residual polynomials at $y = 0$ gets faster with increasing $\lambda$. So, we can conclude that for $1< \lambda \leq 2$ the residual polynomials $r_n^*(y,\lambda)$ have a faster decay at $y = 0$ than the residual polynomials $r_n^*(y,1)$ 
of the Chebyshev method. On the other hand, it is visible in Figure \ref{figure-Chebyshevpolynomials} that the oscillations of the polynomial $U_n^*(x)$, $\lambda = 1.5$, in the interval $[-1,1]$ have a larger amplitude compared to the polynomial $U_n(x)$. This holds generally for $1<\lambda<2$ and is also visible in the following convergence result. 

\begin{Thm} \label{Theorem-convergenceChebyshev} 
For $\lambda < 2$, $x \in [-1,1]$, the polynomial $U_n^*(x)$, $n \geq 3$, is bounded by
\begin{equation} \label{equation-chebysheverror1} \left| \frac{U_n^*(x)}{U_n^*(1)} \right| \leq \frac{1+|1-\lambda|}{2-\lambda} \frac{ 2 }{\frac12 + \sqrt{1-x^2} (n-1)}. \end{equation}
For the residual polynomial $r_n^*(y) = \frac{U_n^*(1-2y)}{U_n^*(1)}$ on $[0,1]$ and the modulus of convergence $\eps_1^S(n)$, we get the estimates
\begin{equation} \label{equation-chebysheverror2} |r_n^*(y)| \leq \frac{1+|1-\lambda|}{2-\lambda} \frac{1}{\frac14 + \sqrt{y(1-y)} (n-1)}, \qquad \eps_1^S(n) \leq \frac{1+|1-\lambda|}{2-\lambda} \frac{1}{n-1}. \end{equation}
\end{Thm}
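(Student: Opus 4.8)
The plan is to pass to the trigonometric substitution $x = \cos t$, $t \in [0,\pi]$, so that $\sqrt{1-x^2} = \sin t$, and to exploit the third representation $U_n^*(x) = U_n(x) + \frac{1-\lambda}{4}U_{n-2}(x)$ together with the values at $1$ recorded in \eqref{equation-identitiesat1}. Dividing by $U_n^*(1) = \frac{(2-\lambda)n + \lambda}{2^n}$ and inserting $U_n(1) = \frac{n+1}{2^n}$, $U_{n-2}(1) = \frac{n-1}{2^{n-2}}$, I would first establish the clean representation
\[
\frac{U_n^*(\cos t)}{U_n^*(1)} = \frac{(n+1)\,\dfrac{U_n(\cos t)}{U_n(1)} + (1-\lambda)(n-1)\,\dfrac{U_{n-2}(\cos t)}{U_{n-2}(1)}}{(2-\lambda)n+\lambda},
\]
which isolates the dilation parameter in the two numerator weights and in the single denominator $(2-\lambda)n+\lambda$.

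The key technical ingredient is a sharp pointwise bound for the individual normalized factors. Writing $\frac{U_m(\cos t)}{U_m(1)} = \frac{1}{m+1}\frac{\sin (m+1)t}{\sin t}$, the two elementary estimates $|\sin (m+1)t| \le 1$ and $|\sin (m+1)t| \le (m+1)|\sin t|$ give $\left|\frac{U_m(\cos t)}{U_m(1)}\right| \le \min\!\left(1, \frac{1}{(m+1)\sin t}\right)$. Since the minimum of two positive numbers never exceeds their harmonic mean, this yields the single smooth bound
\[
\left|\frac{U_m(\cos t)}{U_m(1)}\right| \le \frac{2}{1+(m+1)\sin t},
\]
which is exactly the shape needed to reproduce the denominator in \eqref{equation-chebysheverror1}.

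I would then feed this bound into the representation above with $m = n$ and $m = n-2$. Using $\frac{1}{1+(n+1)\sin t} \le \frac{1}{1+(n-1)\sin t}$ to put both terms over the common denominator $1+(n-1)\sin t$, and then $1+(n-1)\sin t \ge \frac12 + (n-1)\sin t$, collapses the estimate to $\frac{2P}{D\,(\frac12+(n-1)\sin t)}$ with $P = (n+1)+|1-\lambda|(n-1)$ and $D = (2-\lambda)n+\lambda$. The final, and most delicate, step is the constant comparison: it remains to check $\frac{P}{D} \le \frac{1+|1-\lambda|}{2-\lambda}$, equivalently $(1+|1-\lambda|)D - (2-\lambda)P \ge 0$. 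I expect this algebraic verification to be the main obstacle, not because it is deep but because the absolute value $|1-\lambda|$ forces a case split at $\lambda = 1$; a short computation should reduce the left-hand side to $2(\lambda - 1 + |1-\lambda|)$, which vanishes for $\lambda \le 1$ and equals $4(\lambda-1) > 0$ for $1 < \lambda < 2$, closing the estimate \eqref{equation-chebysheverror1}.

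The residual estimates in \eqref{equation-chebysheverror2} are then immediate corollaries. Substituting $x = 1-2y$ gives $\sqrt{1-x^2} = 2\sqrt{y(1-y)}$, so that $\frac12 + \sqrt{1-x^2}(n-1) = 2\big(\frac14 + \sqrt{y(1-y)}(n-1)\big)$, and the factor $2$ cancels to yield the first inequality. For the modulus $\eps_1^S(n)$, I would multiply by $\sqrt{y(1-y)}$ and simply drop the $\frac14$ in the denominator, i.e. use $\frac{\sqrt{y(1-y)}}{\frac14 + \sqrt{y(1-y)}(n-1)} \le \frac{1}{n-1}$ uniformly in $y$, which gives the stated $\frac{1}{n-1}$ bound.
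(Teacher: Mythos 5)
Your proposal is correct and follows essentially the same route as the paper: both rest on the representation $U_n^*(x)=U_n(x)+\frac{1-\lambda}{4}U_{n-2}(x)$ written in trigonometric form, and on the two elementary bounds $|\sin(m+1)t|\le 1$ and $|\sin(m+1)t|\le(m+1)\sin t$. The only difference is bookkeeping: the paper multiplies the whole quotient by $\bigl(\sin t+\tfrac{1}{n+1}\bigr)$, bounds that product by a constant, and absorbs the remainder using $n\ge 3$, whereas you bound each normalized factor by the harmonic-mean estimate $\frac{2}{1+(m+1)\sin t}$ and close with the exact comparison $(1+|1-\lambda|)\bigl((2-\lambda)n+\lambda\bigr)-(2-\lambda)\bigl((n+1)+|1-\lambda|(n-1)\bigr)=2(\lambda-1+|1-\lambda|)\ge 0$, which indeed holds with the case split at $\lambda=1$ that you describe.
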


\begin{proof}
By the third identity in \eqref{equation-definitioncodilatedchebyshev}, we get for $\lambda < 2$ and $x = \cos t$, $t \in [0,\pi]$, the bound
\begin{align*}
\left( \sqrt{1-x^2}+\frac{1}{n+1}\right) \left| \frac{U_n^*(x)}{U_n^*(1)}\right| & \leq
\left( \sin t+\frac{1}{n+1}\right) \left( \frac{ \left| \frac{\sin ((n+1) t)}{\sin t}\right| +  |1-\lambda| \left| \frac{\sin ((n-1) t)}{\sin t}\right|}{(2-\lambda)n + \lambda} \right)\\
& \leq \frac{2 (1+|1-\lambda|) }{(2-\lambda)(n-1) + 2} \leq \frac{2 (1+|1-\lambda|) }{(2-\lambda)(n-1)}.
\end{align*}
Dividing both sides by $\left(\sqrt{1-x^2}+\frac{1}{n+1}\right)$, we can conclude for $n \geq 3$:
\begin{align*}
 \left| \frac{U_n^*(x)}{U_n^*(1)}\right| &\leq \frac{2 (1+|1-\lambda|) }{(2-\lambda)(n-1)  \left( \sqrt{1-x^2}+\frac{1}{n+1}\right)} 
\leq \frac{1+|1-\lambda|}{2-\lambda}\frac{ 2 }{ \frac12+ (n-1) \sqrt{1-x^2}}.
\end{align*}
The estimates for the residual polynomials $r_n^*(y)$ and the modulus $\eps_1^S(n)$ follow immediately from the estimate of $U_n^*(x)$.
\end{proof}

Theorem \ref{Theorem-convergenceChebyshev} states that for all $\lambda < 2$ the symmetric modulus $\eps_1^S(n)$ has the same order $\Ord(n^{-1})$ of convergence.
However, the factor $\frac{1+|1-\lambda|}{2-\lambda}$ in \eqref{equation-chebysheverror2} has a considerable impact on the error estimates if $\lambda$ is
close to $2$. If $\lambda = 2$, the polynomials $U_n^*$ correspond to the Chebyshev polynomials $T_n$ of the first kind and the corresponding semi-iterative scheme is not convergent. 

We have seen so far that for $1 < \lambda < 2$ the residual polynomials $r_n^*(y,\lambda)$ decay faster at $y=0$ but implicate slightly larger error bounds in 
Theorem \ref{Theorem-convergenceChebyshev} than the residual polynomials $r_n^*(y,1)$ linked to the Chebyshev polynomials $U_n$.
It depends now on the given operator $A$ and the right hand side $g$, whether it is favorable
to choose $U_n$ or $U_n^*$ in a semi-iterative scheme. Assuming that most of the spectrum of $A^*A$ is concentrated at $y = 0$, the choice of $U_n^*$ in Algorithm \ref{algorithm-2} with an appropriate $\lambda > 1$ can have advantages compared to $U_n$. 

Finally, we derive the recurrence coefficients of the semi-iterative scheme based on the polynomials $U_n^*$. For the polynomials $U_n^*$, we have first the three-term recurrence relation
\begin{align} \label{equation-3termrecurrencechebyshev}
U_{n+1}^*(x) &= x U_n^*(x) - \frac{1}{4} U_{n-1}^*(x), \qquad n \geq 2 \\ 
U_0^*(x) &= 1, \qquad U_1^*(x) = x, \qquad U_2^*(x) = x^2 - \lambda \frac{1}{4}. \notag
\end{align}
It is well-known that the monic polynomials $T_n$ and $U_n$ satisfy \eqref{equation-3termrecurrencechebyshev} with $\lambda = 2$ and $\lambda = 1$. Then, it follows immediately that \eqref{equation-3termrecurrencechebyshev} holds
for the linear combination $U_n^*$. 

In view of \eqref{equation-3termrecurrencechebyshev}, the polynomials $U_n^*$ turn out to be a particular family of co-dilated orthogonal polynomials constructed by dilating 
a coefficient in the recurrence relation of the polynomials $U_n$ by a factor $\lambda$. This special construction and the consequences regarding the roots of $U_n^*$ are investigated in more detail in the next section. In the following, the polynomials $U_n^*$ are referred to as co-dilated Chebyshev polynomials. 

The coefficients $\mu_{n+1}$ in Algorithm \ref{algorithm-2} can also be computed explicitly as
\[ \mu_{n+1} = \frac{U_n^*(1)}{U_{n+1}^*(1)} = 2 \frac{(2-\lambda)(n+1) + 2\lambda-2}{(2-\lambda)(n+2) + 2\lambda-2} = 2 \frac{(2-\lambda)n + \lambda}{(2-\lambda)n + 2}.\]
Therefore, using the recurrence coefficients of the polynomials $U_n^*$ in Algorithm \ref{algorithm-2}, we get the following recurrence formula for the iterates:
\begin{align*}
f_{n+1} &= f_n + \frac{(2-\lambda)n+2\lambda-2}{(2-\lambda)n+2}\;(f_n - f_{n-1}) + 4 \,\frac{(2-\lambda)n+\lambda}{(2-\lambda) n+2}\, \omega \;A^*( g - A f_n), \quad n \geq 1, \\
      f_1 &= 2 \omega \; A^* g, \qquad f_0 = 0.
\end{align*}
For $\lambda = 1$, this iteration corresponds precisely with the Chebyshev method of Stiefel (see \cite[p. 116]{Rieder}). 

\section{Symmetric co-dilated orthogonal polynomials}

In this section, we generalize the concept of the co-dilated Chebyshev polynomials to arbitrary symmetric orthogonal polynomials on the interval $[-1,1]$. 
We denote by $P_n$ the monic polynomials of degree $n$ orthogonal with respect to an axisymmetric 
weight function $w$ supported on $[-1,1]$. In this case, the coefficients $\alpha_n$, $n \in \Nn_0$, in \eqref{equation-3termrecurrencegeneral} vanish and we obtain the three-term recurrence relation
\begin{equation} \label{equation-3termrecurrence} 
P_{n+1}(x) = x P_n(x) - \beta_n P_{n-1}(x), \quad P_0(x) = 1, \quad P_1(x) = x,
\end{equation}
with positive coefficients $\beta_n > 0$, $n \in \Nn$. The monic co-dilated orthogonal polynomials $P_n^*(x) \equiv P_n^*(x,\lambda,m)$
are now derived from the original polynomials $P_n$ on $[-1,1]$ by dilating the coefficient $\beta_m$ in the three-term recurrence relation by a factor $\lambda \in \Rr$.
\begin{align} \label{equation-3termrecurrencedilated}
P_0^*(x) &= 1, \qquad P_1^*(x) = x, \notag \\
P_{n+1}^*(x) &= x P_n^*(x) - \beta_n P_{n-1}^*(x), \quad n \neq m, \\
P_{m+1}^*(x) &= x P_m^*(x) - \lambda \beta_m P_{m-1}^*(x). \notag
\end{align}
If $\lambda > 0$, Favards Theorem ensures that $P_n^*(x)$, $n \in \Nn$, is a family of 
orthogonal polynomials. For $m = 1$, the co-dilated orthogonal polynomials $P_n^*(x)$ 
were firstly introduced in \cite{Dini} by Dini and then generalized in \cite{DiniMaroniRonveaux1989, RonveauxBelmehdiDiniMaroni1990}. Many properties of the zeros of the co-dilated 
orthogonal polynomials like interlacing behavior and the
distribution of the zeros are well-known and studied in \cite{IfantisSiafarikas1995-2}, \cite{MarcellanDehesaRonveaux1990} and \cite{Slim1988}. 
We will add some more properties in the course of this section. 

First of all, the co-dilated polynomials can be represented with help of the numerator polynomials associated to $P_n$ (see \cite{MarcellanDehesaRonveaux1990}). Therefore, we denote by $P_n^{(m)}(x)$ the $m$-th. numerator polynomials of $P_n$ defined by the shifted recursion formula
\begin{align} \label{equation-3termrecurrencenumerator} 
P_{n+1}^{(m)}(x) &= x P_n^{(m)}(x) - b_{n+m} P_{n-1}^{(m)}(x), \quad n \geq 1, \\ P_0^{(m)}(x) &= 1, \quad P_1^{(m)}(x) = x. \notag
\end{align}
Then, by a simple induction argument, the co-dilated polynomials $P_n^*$ can be written as 
\begin{align} \label{equation-characteristiccodilated}
P_n^*(x) & =  \lambda P_n(x) + (1-\lambda) P_m(x) P_{n-m}^{(m)}(x),\qquad \text{for $n>m$,} \\
P_n^*(x) & = P_n(x), \hspace{5.2cm} \text{for $n \leq m$.} \notag
\end{align}


Now, we investigate the behavior of the zeros of $P_n^*$ if the dilation parameter $\lambda$ in the recurrence relation \eqref{equation-3termrecurrencedilated} is altered. 
We denote by $x_{n,j}$ and $x_{n,j}^*$, $1 \leq j \leq n$, the $n$ zeros of $P_n$ and $P_n^*$ in ascending order. 
Then, we get the following result for the extremal roots of $P_n^*$ and $P_n$. 

\begin{Thm} \label{Theorem-largerzeros}
The largest zero $x_{n,n}^*$ of $P_n^*(x)$ is a monotone increasing function of the dilation parameter $\lambda$, 
the smallest zero $x_{n,1}^*$ is a monotone decreasing function of $\lambda$. 
In particular, for $\lambda> 1$, we have
\begin{align*}
x_{n,n}^* = x_{n,n} \qquad \text{and} \qquad x_{n,1}^* = x_{n,1} &\qquad \text{for \quad $n \leq m$}, \\
x_{n,n}^* > x_{n,n} \qquad \text{and} \qquad x_{n,1}^* < x_{n,1} &\qquad \text{for \quad $n > m$}.
\end{align*}
\end{Thm}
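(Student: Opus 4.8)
The plan is to translate the statement about zeros into one about the extreme eigenvalues of a symmetric tridiagonal (Jacobi) matrix, and then to exploit the sign pattern of the corresponding eigenvectors. Because the weight is axisymmetric, the zeros of $P_n$ are the eigenvalues of the symmetric tridiagonal matrix $J_n$ of order $n$ with zero diagonal and off-diagonal entries $\sqrt{\beta_1},\dots,\sqrt{\beta_{n-1}}$; by \eqref{equation-3termrecurrencedilated} the zeros of the co-dilated polynomial $P_n^*$ are the eigenvalues of the matrix $J_n(\lambda)$ obtained from $J_n$ by replacing the single entry $\sqrt{\beta_m}$ in positions $(m,m+1)$ and $(m+1,m)$ by $\sqrt{\lambda\beta_m}$. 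If $n\le m$ the coefficient $\beta_m$ never enters $J_n(\lambda)$, so $P_n^*=P_n$ and both extreme zeros are independent of $\lambda$; this already settles the equalities for $n\le m$. Throughout I keep $\lambda>0$, the range in which $\sqrt{\lambda\beta_m}>0$ and $J_n(\lambda)$ is a genuine Jacobi matrix.

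The decisive step, and the one I expect to be the \emph{main obstacle}, is to pin down the sign structure of the eigenvectors belonging to the largest and smallest eigenvalues of $J_n(\lambda)$. For the largest eigenvalue I would argue by Perron--Frobenius: for large $c$ the matrix $cI+J_n(\lambda)$ is entrywise nonnegative and irreducible, since all off-diagonals are positive, so its Perron eigenvalue is simple and carries a strictly positive eigenvector $v$; this eigenvector belongs to $x_{n,n}^*=\lambda_{\max}(J_n(\lambda))$ and satisfies $v_k>0$ for all $k$, in particular $v_m v_{m+1}>0$. For the smallest eigenvalue I would use the spectral symmetry forced by the axisymmetric weight: with the diagonal sign matrix $D=\diag((-1)^{k-1})$ one checks $D\,J_n(\lambda)\,D=-J_n(\lambda)$, so $w:=Dv$ is an eigenvector for $-\lambda_{\max}(J_n(\lambda))=\lambda_{\min}(J_n(\lambda))=x_{n,1}^*$; its entries $w_k=(-1)^{k-1}v_k$ alternate in sign, whence $w_m w_{m+1}=-v_m v_{m+1}<0$. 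Establishing these two facts carefully, positivity of the Perron vector and the sign flip for the bottom of the spectrum, is the real content of the argument, and both persist for every $\lambda>0$ because $J_n(\lambda)$ remains an irreducible Jacobi matrix.

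With the sign structure in hand, strict monotonicity follows from a Rayleigh-quotient comparison, which avoids any appeal to differentiability of the eigenvalues. Fix $n>m$ and $\lambda_2>\lambda_1>0$, and let $v,w$ be the unit eigenvectors above for the parameter $\lambda_1$. Since $J_n(\lambda_2)-J_n(\lambda_1)=\bigl(\sqrt{\lambda_2\beta_m}-\sqrt{\lambda_1\beta_m}\bigr)\bigl(e_m e_{m+1}^{\top}+e_{m+1}e_m^{\top}\bigr)$ carries a positive scalar factor, the Courant--Fischer characterizations give
\begin{align*}
x_{n,n}^*(\lambda_2) &\ge v^{\top}J_n(\lambda_2)v = x_{n,n}^*(\lambda_1)+2\bigl(\sqrt{\lambda_2\beta_m}-\sqrt{\lambda_1\beta_m}\bigr)v_m v_{m+1} > x_{n,n}^*(\lambda_1), \\
x_{n,1}^*(\lambda_2) &\le w^{\top}J_n(\lambda_2)w = x_{n,1}^*(\lambda_1)+2\bigl(\sqrt{\lambda_2\beta_m}-\sqrt{\lambda_1\beta_m}\bigr)w_m w_{m+1} < x_{n,1}^*(\lambda_1).
\end{align*}
Hence $x_{n,n}^*$ is strictly increasing and $x_{n,1}^*$ strictly decreasing in $\lambda$ for $n>m$, and constant for $n\le m$. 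Finally, dilation by $\lambda=1$ leaves \eqref{equation-3termrecurrencedilated} unchanged, so $x_{n,n}^*(1)=x_{n,n}$ and $x_{n,1}^*(1)=x_{n,1}$; combining this base case with the strict monotonicity yields $x_{n,n}^*>x_{n,n}$ and $x_{n,1}^*<x_{n,1}$ for all $\lambda>1$ and $n>m$, which completes the proof.
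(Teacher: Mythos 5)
Your proof is correct, but it takes a genuinely different route from the paper's. The paper deduces the monotonicity from a general Hellman--Feynman-type result on extremal zeros of orthogonal polynomials (citing \cite[Theorem 1.1]{ErbTookos2011}): since the recurrence coefficients have nonnegative derivatives in $\lambda$, the largest zero is nondecreasing, and for $n>m$ strictness follows from the explicit derivative formula
\[
\frac{\mathrm{d} x_{n,n}^*}{\mathrm{d}\lambda} = l_{n,n}^*\,\frac{\beta_m P_m^*(x_{n,n}^*)P_{m-1}^*(x_{n,n}^*)}{\lambda\beta_1\cdots\beta_m} > 0,
\]
with the smallest zero handled by the symmetry $x_{n,1}^*=-x_{n,n}^*$. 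You instead work directly with the Jacobi matrix $J_n(\lambda)$, get the sign pattern of the extremal eigenvectors from Perron--Frobenius and from the conjugation $D J_n(\lambda) D = -J_n(\lambda)$ (the matrix form of the same polynomial symmetry), and replace differentiation in $\lambda$ by a Rayleigh-quotient/Courant--Fischer comparison between two parameter values $\lambda_1<\lambda_2$. Notably, the paper's own Remark after the theorem says that a Perron--Frobenius proof is possible by adapting \cite[Theorem 7.4.1]{Ismail}; your argument essentially carries out that alternative in full. What your route buys is self-containedness: no appeal to differentiability of zeros with respect to a recurrence parameter, no Christoffel numbers, only standard finite-dimensional linear algebra, and it cleanly isolates where strictness comes from (irreducibility of the tridiagonal matrix forces $v_m v_{m+1}>0$). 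What the paper's route buys is stronger, quantitative information: an exact formula for the derivative of the extremal zero rather than one-sided monotonicity inequalities. Your restriction to $\lambda>0$ is the natural domain (it is where Favard's theorem makes $P_n^*$ an orthogonal family, and where the paper's formula also lives), so nothing is lost relative to the stated theorem.
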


\begin{proof}
We deduce Theorem \ref{Theorem-largerzeros} from a general result on the monotonicity of the extremal zeros based on the Hellman-Feynman theorem (see \cite{ErbTookos2011}, \cite[Section 7.3 and 7.4]{Ismail}
and the references therein). This general result states that if the coefficients $\alpha_n$ and $\beta_n$ are differentiable monotone increasing functions of the parameter $\lambda$, then also the largest root is an increasing function of $\lambda$. In our case, the derivatives of the coefficients $\alpha_n$ and $\beta_n$ with respect to the dilation parameter $\lambda$ are given by
\begin{align*} \alpha_n'(\lambda) &= 0, \quad n \in \Nn_0, \\
               \beta_n'(\lambda) & = 0,  \quad n \neq m, \qquad \beta_m'(\lambda) = \beta_m > 0
\end{align*}
and therefore nonnegative. Thus, by \cite[Theorem 1.1]{ErbTookos2011} the largest root $x_{n,n}^*$ of $P_n^*(x)$ is a monotonic increasing function of $\lambda$. For $n \leq m$, equation \eqref{equation-characteristiccodilated} implies that the polynomials $P_n$ and $P_n^*$ and therefore also the zeros $x_{n,n}$ and $x_{n,n}^*$ coincide. For 
$n > m$, the Hellman-Feynman theorem implies the formula (see \cite[formula (2)]{ErbTookos2011}, \cite[formula (7.3.8)]{Ismail})
\[ \frac{ \mathrm{d} x_{n,n}^*}{\mathrm{d} \lambda} = l_{n,n}^*
\frac{\beta_m P_m^*(x_{n,n}^*) P_{m-1}^*(x_{n,n}^*)}{\lambda \beta_1 \beta_2 \cdots \beta_{m}} > 0\]
for the derivative of $x_{n,n}^*$ with respect to the parameter $\lambda$, where $l_{n,n}^*>0$ denotes the Christoffel number corresponding to the zero $x_{n,n}^*$. So, for $n > m$ and $\lambda > 1$, the root $x_{n,n}^*$ is strictly larger than $x_{n,n}$. By the symmetry of the polynomials $P_n^{*}$, we have $x_{n,1}^* = - x_{n,n}^*$. This 
implies the statement for the smallest roots.  
\end{proof}

\begin{Rem}
Alternatively, it is also possible to prove Theorem \ref{Theorem-largerzeros} with the Perron-Frobenius theory. One way to do this consists in adopting \cite[Theorem 7.4.1]{Ismail}
to the setting of Theorem \ref{Theorem-largerzeros}. For the case $m = 1$, even stronger statements can be shown. For $\lambda > 1$, Slim proved in \cite{Slim1988} the following interlacing properties for the zeros $x_{n,j}^*$ and $x_{n,j}$:
\begin{align*}
x_{n,j}^* < x_{n,j} < x_{n,j+1}^*, &\qquad \text{for} \quad 1 \leq j \leq \left\lfloor \frac{n}{2} \right\rfloor, \\
x_{n,j-1}^* < x_{n,j} < x_{n,j}^*, &\qquad \text{for} \quad \left\lceil \frac{n}{2} \right\rceil < j \leq n.
\end{align*}
\end{Rem}

In order to get residual polynomials that are small in the interior of $[0,1]$,
it is important that all the zeros of the co-dilated polynomials $P_n^*$ are in the interior of the interval $[-1,1]$.
Restricting the dilation parameter $\lambda$ appropriately, this can indeed be proven. 

\begin{Lem} \label{Lemma-zeroscodilated}
All the zeros of the polynomials $P_n^*(x)$, $n \in \Nn$, are in the interior of $[-1,1]$, if and only if
\begin{equation} \label{equation-zeroscodilated}
\lambda \leq \frac{1}{1 - L_m},\end{equation}
with the constant $L_m$ given by 
\[ 0 \leq L_m :=  \lim_{n \to \infty} \frac{P_n(1)}{P_{n-m}^{(m)}(1)} \frac{1}{P_m(1)}  < 1. \]
\end{Lem}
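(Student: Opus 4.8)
The plan is to reduce everything to the behaviour of the single largest zero. Since the dilation \eqref{equation-3termrecurrencedilated} keeps the symmetric recurrence (all $\alpha_n=0$), the polynomials $P_n^*$ are again symmetric for $\lambda>0$, so $x_{n,1}^*=-x_{n,n}^*$ and ``all zeros of $P_n^*$ lie in the interior of $[-1,1]$'' is equivalent to $x_{n,n}^*<1$. For $n\le m$ we have $P_n^*=P_n$ by \eqref{equation-characteristiccodilated}, whose zeros already lie in $(-1,1)$, so only the indices $n>m$ impose a condition. For fixed $n>m$ I would first argue that $x_{n,n}^*<1 \Leftrightarrow P_n^*(1)>0$: the implication ``$\Rightarrow$'' is clear because $P_n^*$ is monic with largest zero $x_{n,n}^*$; for ``$\Leftarrow$'', Theorem \ref{Theorem-largerzeros} gives that $x_{n,n}^*$ is continuous and, for $n>m$, strictly increasing in $\lambda$, it equals $x_{n,n}<1$ at $\lambda=1$, and being the largest zero it is the first to reach the value $1$, which can only happen at a $\lambda$ with $P_n^*(1)=0$. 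Hence $x_{n,n}^*<1$ exactly on the $\lambda$-interval where $P_n^*(1)>0$.

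Next I would evaluate $P_n^*(1)$ through the representation \eqref{equation-characteristiccodilated}. Setting $Q_n:=\frac{P_n(1)}{P_m(1)\,P_{n-m}^{(m)}(1)}$ and using that $P_n(1),P_m(1),P_{n-m}^{(m)}(1)$ are all positive, one obtains $P_n^*(1)=P_m(1)P_{n-m}^{(m)}(1)\bigl(1-\lambda(1-Q_n)\bigr)$, which is affine in $\lambda$. Therefore, once $Q_n<1$ is established, the condition $P_n^*(1)>0$ reads precisely $\lambda<\lambda_n:=\frac{1}{1-Q_n}$, and $\lambda_n>1$. The global requirement that every $P_n^*$ have only interior zeros thus becomes $\lambda<\lambda_n$ for all $n>m$, i.e. $\lambda\le\inf_{n>m}\lambda_n$ with the infimum possibly not attained.

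The heart of the proof is to show that $Q_n$ is strictly decreasing with $\lim_{n\to\infty}Q_n=L_m\in[0,1)$; this identifies $\inf_{n>m}\lambda_n=\frac{1}{1-L_m}$ and explains the non-strict inequality in \eqref{equation-zeroscodilated}. I would put $a_j:=P_{j+1}(1)/P_j(1)$ and $c_k:=P_{k+1}^{(m)}(1)/P_k^{(m)}(1)$; evaluating \eqref{equation-3termrecurrence} and \eqref{equation-3termrecurrencenumerator} at $x=1$ gives $a_j=1-\beta_j/a_{j-1}$ and $c_k=1-\beta_{k+m}/c_{k-1}$, both issued from the value $1$ (at index $0$, and after the shift $i=k+m$ at index $m$). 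Since $y\mapsto 1-\beta/y$ is strictly increasing for $y>0$, since all $a_j,c_k>0$ as ratios of the positive numbers $P_j(1),P_k^{(m)}(1)$, and since $a_m=1-\beta_m/a_{m-1}<1=c_0$, an induction along this common recursion yields $a_n<c_{n-m}$ for all $n\ge m$. Because $Q_{n+1}/Q_n=a_n/c_{n-m}$, this gives $Q_{n+1}<Q_n$. Being positive and decreasing, $Q_n$ converges to some $L_m\ge 0$, and $L_m<Q_{m+1}=a_m<1$, which is exactly the constant in the statement. Finally $\lambda_n=\frac{1}{1-Q_n}$ decreases to $\frac{1}{1-L_m}$ while staying strictly above it, so $\lambda<\lambda_n$ for every $n$ holds if and only if $\lambda\le\frac{1}{1-L_m}$, which is \eqref{equation-zeroscodilated}.

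The main obstacle is precisely this monotonicity of $Q_n$, and the clean way through it is the observation that $(a_j)$ and the shifted sequence $(c_k)$ obey the \emph{same} order-preserving recursion $y\mapsto 1-\beta/y$, issued from the same value $1$ but at different starting indices, so that the single seed inequality $a_m<1$ propagates to all later indices. The only points needing care are the positivity of every quantity involved (guaranteed by $P_n(1)>0$ and by $P_{k}^{(m)}(1)>0$) and the fact that the limiting value $\frac{1}{1-L_m}$ is approached from above, which is what converts the strict per-level conditions $\lambda<\lambda_n$ into the non-strict global bound.
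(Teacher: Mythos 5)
Your argument is sound on the range $\lambda>0$, and it reaches the statement by a route genuinely different from the paper's. The paper does not use Theorem \ref{Theorem-largerzeros} at all: it proves by induction the Wronskian-type identity \eqref{equation-identitynumerator}, $P_{n+1}(x)P_{n-m}^{(m)}(x)-P_{n-m+1}^{(m)}(x)P_n(x)=-\beta_m\beta_{m+1}\cdots\beta_n P_{m-1}(x)$, from which the chain of inequalities \eqref{equation-chaininequality} yields exactly your strict monotone decrease of $Q_n$ --- but simultaneously for every $x\ge 1$, not only at $x=1$ --- and it then finishes with a pure sign argument: for $\lambda\le\frac{1}{1-L_m}$ one gets $P_n^*(1)>0$ and no sign change of $P_n^*$ on $[1,\infty)$, while for $\lambda>\frac{1}{1-L_m}$ some $P_n^*(1)<0$, which for a monic polynomial forces a real zero larger than $1$; symmetry handles $x\le-1$. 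Your two replacements are both viable and instructive. The ``same order-preserving recursion, different seeds'' argument for $a_j$ and $c_{j-m}$ is more elementary than the paper's induction identity; what the identity buys in exchange is validity on the whole half-line $x \geq 1$ (which is what drives the paper's sign argument) and reusability (the chain inequality is cited again in the proof of Lemma \ref{Lemma-codilatedbounded}). Conversely, your reduction to the largest zero via the equivalence $x_{n,n}^*<1\Leftrightarrow P_n^*(1)>0$ is clean, but it imports Theorem \ref{Theorem-largerzeros} and with it the restriction $\lambda>0$: for $\lambda\le 0$ the recurrence coefficient $\lambda\beta_m$ is no longer positive, Favard's theorem does not apply, and the zeros need not be real (for $m=1$ and $\lambda<0$ one has $P_2^*(x)=x^2-\lambda\beta_1$ with purely imaginary zeros), so ``largest zero'' loses its meaning. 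Since \eqref{equation-zeroscodilated} allows $\lambda\le 0$, that case is left untreated by your proof; the paper's sign-change argument does cover it, though only in the weaker sense that $P_n^*$ has no real zeros outside $(-1,1)$. Finally, note that both your proof and the paper's take the positivity $P_k^{(m)}(1)>0$ of the numerator polynomials for granted; this is a standard fact (the shifted coefficient sequence $\{\beta_{k+m}\}$ is again a chain sequence), but it is the one ingredient neither argument makes explicit.
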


\begin{proof}
By induction we prove the following identity for the numerator polynomials $P_{n}^{(m)}$: 
\begin{equation} \label{equation-identitynumerator}
P_{n+1}(x) P_{n-m}^{(m)}(x) - P_{n-m+1}^{(m)}(x) P_n(x) = - \beta_m \beta_{m+1} \cdots \beta_{n} P_{m-1}(x), \qquad n \geq m.
\end{equation}
For $n = m$, this is clearly the recurrence formula for $P_{m+1}$. Assuming that \eqref{equation-identitynumerator} holds for an integer $n > m$, we show that 
\eqref{equation-identitynumerator} holds also for $n+1$. To this end we adopt the three-term recurrence formulas \eqref{equation-3termrecurrence} and \eqref{equation-3termrecurrencenumerator} and get:
\begin{align*}
P_{n+2}(x) P_{n-m+1}^{(m)}(x) & - P_{n-m+2}^{(m)}(x) P_{n+1}(x) \\ & = x (P_{n+1}(x) P_{n-m+1}^{(m)}(x) - P_{n-m+1}^{(m)}(x) P_{n+1}(x)) \\ & \qquad - 
\beta_{n+1} (P_{n}(x) P_{n-m+1}^{(m)}(x) - P_{n-m}^{(m)}(x) P_{n+1}(x)) \\
& = - \beta_m \beta_{m+1} \cdots \beta_{n} \beta_{n+1} P_{m-1}(x).
\end{align*}
For $x \geq 1$, we have $P_m(x) > 0$ and $P_{n-m}^{(m)}(x) > 0$ for all $n \geq m \geq 0$. 
Then, using \eqref{equation-identitynumerator}, we get the following chain of inequalities for $x \geq 1$:
\begin{align} \label{equation-chaininequality}
 0 < \frac{P_{n+1}(x)}{P_{n-m+1}^{(m)}(x)} & = \frac{P_{n}(x)}{P_{n-m}^{(m)}(x)} - \frac{\beta_m \beta_{m+1} \cdots \beta_{n} P_{m-1}(x)}{P_{n-m+1}^{(m)}(x) P_{n-m}^{(m)}(x)} \notag \\&  < 
\frac{P_{n}(x)}{P_{n-m}^{(m)}(x)} < \cdots < \frac{P_{m}(x)}{P_{0}^{(m)}(x)} = P_m(x). 
\end{align}
This implies first of all that $0 \leq L_m < 1$ exists. Further, by the identity \eqref{equation-characteristiccodilated} we get
\[ \frac{P_{n}^*(1)}{P_{n-m}^{(m)}(1) P_m(1)} = \lambda \frac{P_n(1)}{P_{n-m}^{(m)}(1) P_m(1)} + 1 - \lambda \quad \left\{ \begin{array}{ll} \geq 1 & \text{if $\lambda \leq 0$}, \\ 
> \lambda (L_m -1) + 1 & \text{if $\lambda > 0$.} \end{array} \right.\]
Therefore, if $\lambda \leq \frac{1}{1-L_m}$, then $P_{n}^*(1) > 0$ and $P_n^*(x)$ does not change sign for $x \geq 1$ for all $n \geq 1$. On the other hand, if $\lambda > \frac{1}{1-L_m}$, there
exists an $n \in \Nn$ such that $P_n^*(1) < 0$. Since the polynomials $P_n^*(1)$ are monic, this implies that there exists a root of $P_n^*(x)$ larger than $1$.  
Since $P_n^*(-x) = (-1)^n P_n^*(x)$, the respective statements hold also for $x \leq -1$. 
\end{proof}

\begin{Rem}
Chihara proved in \cite{Chihara1957} similar results for families of co-recursive orthogonal polynomials. 
The statement and the proof of Lemma \ref{Lemma-zeroscodilated} are adaptions of \cite[Theorem 2]{Chihara1957} to the case of co-dilated polynomials. The case 
$m = 1$ of Lemma \ref{Lemma-zeroscodilated} is proven by Slim in \cite{Slim1988}. For $m = 1$ the formula \eqref{equation-identitynumerator} is also well-known, see \cite[Equation 4.4]{Chihara}.
\end{Rem}

The next Lemma shows that the critical point $\lambda = \frac{1}{1-L_m}$ in Lemma \ref{Lemma-zeroscodilated} is also a critical point for the asymptotic behavior of the normalizing factor $P_n^*(1)$. 

\begin{Lem} \label{Lemma-asymptoticone}
Let $L_m > 0$ and $\lambda < \frac{1}{1-L_m}$. The sequence $P_n^*(1) /P_n(1)$, $n \in \Nn$, is monotonically decreasing and its limit is given by
\begin{equation} \label{equation-asymptoticone}
\lim_{n \to \infty} \frac{P_n^*(1)}{P_n(1)} = \lambda + \frac{1-\lambda}{L_m} > 0.
\end{equation}
If $\lambda = \frac{1}{1-L_m}$, then $\lim_{n \to \infty} \frac{P_n^*(1)}{P_n(1)} = 0$. 
\end{Lem}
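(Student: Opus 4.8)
My plan is to collapse the whole statement onto a single scalar sequence and then do only algebra. Evaluating the representation \eqref{equation-characteristiccodilated} at $x=1$ and dividing by $P_n(1)$ gives, for every $n>m$,
\[ \frac{P_n^*(1)}{P_n(1)} = \lambda + (1-\lambda)\,q_n, \qquad q_n := \frac{P_m(1)\,P_{n-m}^{(m)}(1)}{P_n(1)}. \]
The point is that $q_n$ is exactly the reciprocal of the ratio occurring in the definition of $L_m$ in Lemma \ref{Lemma-zeroscodilated}. Note that this affine formula even reproduces the correct value at $n=m$, since $P_0^{(m)}=1$ forces $q_m=1$ and hence the ratio $1$, consistent with $P_n^*=P_n$ for $n\le m$; so no separate treatment of the initial segment is needed. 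The entire lemma is therefore a statement about the monotonicity and limit of $q_n$, fed through the fixed affine map $t\mapsto \lambda+(1-\lambda)t$.

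First I would pin down the behaviour of $q_n$, which is the only genuinely analytic input. Its reciprocal $1/q_n = P_n(1)/\bigl(P_{n-m}^{(m)}(1)\,P_m(1)\bigr)$ is precisely the quantity controlled by the chain of inequalities \eqref{equation-chaininequality} evaluated at $x=1$ (divided by $P_m(1)$): that chain shows $1/q_n$ is strictly positive and strictly decreasing in $n$, and by the definition of $L_m$ it converges to $L_m$. The hypothesis $L_m>0$ is what makes this limit nonzero, so $q_n$ is strictly positive, strictly increasing, and converges to $1/L_m$.

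From $q_n\to 1/L_m$ I read off \eqref{equation-asymptoticone} directly:
\[ \lim_{n\to\infty}\frac{P_n^*(1)}{P_n(1)} = \lambda + (1-\lambda)\frac{1}{L_m} = \lambda + \frac{1-\lambda}{L_m}. \]
Positivity is then a one-line rearrangement: multiplying by $L_m>0$, the limit is positive iff $1-\lambda(1-L_m)>0$, i.e. (using $1-L_m>0$) iff $\lambda<\tfrac{1}{1-L_m}$; the same computation gives the value $0$ at $\lambda=\tfrac{1}{1-L_m}$, which settles the final assertion. For the monotonicity I would difference the affine relation,
\[ \frac{P_{n+1}^*(1)}{P_{n+1}(1)} - \frac{P_n^*(1)}{P_n(1)} = (1-\lambda)\,(q_{n+1}-q_n), \]
and since $q_{n+1}-q_n>0$ each increment carries the sign of $(1-\lambda)$, giving the claimed monotone decay.

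I do not expect a real obstacle here: the two facts that carry the proof — that $1/q_n$ is positive, strictly decreasing, and tends to $L_m$ — are already established in \eqref{equation-chaininequality} and in the definition of $L_m$, so once they are invoked the argument is purely formal. The one point deserving care is the sign bookkeeping in the monotonicity step, where the direction is dictated entirely by $\operatorname{sign}(1-\lambda)$ (as is already transparent in the Chebyshev case, where $P_n^*(1)/P_n(1)=(2-\lambda)+2(\lambda-1)/(n+1)$); and the indispensable role of $L_m>0$, which keeps $1/q_n$ bounded away from $0$ so that $q_n$ has the finite limit $1/L_m$ — without it the clean formula \eqref{equation-asymptoticone} would degenerate, explaining why that case is excluded.
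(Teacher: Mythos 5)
Your proposal is correct and follows essentially the same route as the paper's own proof: evaluate \eqref{equation-characteristiccodilated} at $x=1$ to get the affine formula $\lambda+(1-\lambda)q_n$, use the chain of inequalities \eqref{equation-chaininequality} together with $L_m>0$ to see that $q_n$ increases to $1/L_m$, and read off the limit, its positivity for $\lambda<\frac{1}{1-L_m}$, and the value $0$ at $\lambda=\frac{1}{1-L_m}$. The sign caveat you flag in the monotonicity step is genuine---for $\lambda<1$ the increments $(1-\lambda)(q_{n+1}-q_n)$ are positive, so $P_n^*(1)/P_n(1)$ is then increasing rather than decreasing---but the paper's proof makes exactly the same silent leap (``therefore $P_n^*(1)/P_n(1)$ is a monotonically decreasing sequence''), so your argument matches the paper's in substance and shares this one minor defect rather than introducing any new gap.
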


\begin{proof}
Using formula \eqref{equation-characteristiccodilated}, we obtain
\[ \frac{P_n^*(1)}{P_n(1)} = \frac{\lambda P_n(1)}{P_n(1)} + \frac{(1-\lambda)P_{n-m}^{(m)}(1) P_m(1)}{P_n(1)} = \lambda + (1-\lambda) \frac{P_{n-m}^{(m)}(1) P_m(1)}{P_n(1)}. \]
By \eqref{equation-chaininequality}, the sequence  $\frac{P_{n-m}^{(m)}(1) P_m(1)}{P_n(1)}$ is monotonically increasing and converges to $\frac{1}{L_m}$. Therefore, $\frac{P_n^*(1)}{P_n(1)}$ is a monotonically
decreasing sequence and for the limit $n \to \infty$ we get
\[ \lim_{n \to \infty} \frac{P_n^*(1)}{P_n(1)} = \lambda + \lim_{n \to \infty} \frac{(1-\lambda)P_{n-m}^{(m)}(1) P_m(1)}{P_n(1)} 
= \lambda +\frac{1-\lambda}{L_m} =: f(\lambda). \]
Since $0 < L_m < 1$, the function $f$ is a strictly monotone decreasing in the variable $\lambda$. Moreover, we have $f(\frac{1}{1-L_m}) = 0$. This implies
the statement of Lemma \ref{Lemma-asymptoticone}.
\end{proof}

\section{Semi-iterative methods based on co-dilated ultraspherical polynomials} \label{section-codilatedultraspherical}

As a main example of accelerated Landweber methods based on co-dilated orthogonal polynomials, we consider the ultraspherical polynomials $P_n^{(\nu)}$, $\nu > -\frac{1}{2}$, and its co-dilated relatives $P_n^{(\nu)*}$. The orthogonality weight function of the ultraspherical polynomials on $[-1,1]$ is given by the function $w_\nu(x) = (1-x^2)^{\nu-\frac{1}{2}}$ with the mass
\[ \beta_0 = \int_{-1}^1 w_\nu(x) dx = \int_{-1}^1 (1-x^2)^{\nu-\frac{1}{2}} dx = \frac{2^{2\nu}\Gamma(\nu+\frac{1}{2})^2}{\Gamma(2\nu+1)}.\]
The coefficients $\beta_n$ of the three-term recurrence relation \eqref{equation-3termrecurrence} can be written explicitly as (see \cite[p. 29]{Gautschi})
\begin{equation} \label{equation-recursioncoefficientsultraspherical} \beta_n = \frac{n (n+2\nu-1)}{4 (n+\nu)(n+\nu-1)}, \qquad n \in \Nn.\end{equation}
In this section, we will only consider co-dilated polynomials in which the first coefficient $\beta_1$ is altered, i.e. in which $m = 1$ holds. To simplify the notation we will 
use the symbol $Q_n^{(\nu)}$ to denote the first order numerator polynomials of $P_n^{(\nu)}$. They can be represented as (see \cite[Chapter III, formula (4.6)]{Chihara})
\begin{equation} \label{equation-seconddefinitionnumeratorpolynomials}
Q_n^{(\nu)}(x) :=  P_n^{(\nu)(1)}(x) = \frac{1}{\beta_0}\int_{-1}^1 \frac{P_{n+1}^{(\nu)}(x) - P_{n+1}^{(\nu)}(\xi)}{x-\xi} w_\nu(\xi) d\xi, \quad n \in \Nn_0.
\end{equation}
We are using this representation to compute the critical value $L_1$. 

\begin{Lem} \label{Lemma-criticalconstantultraspherical}
For $\nu > \frac{1}{2}$, the quotient $Q_{n-1}^{(\nu)}(1)/P_{n}^{(\nu)}(1)$ is given by
\begin{equation} \label{equation-quotientnumerator}
\frac{Q_{n-1}^{(\nu)}(1)}{P_{n}^{(\nu)}(1)} = \frac{2 \nu}{ 2 \nu -1} \left(1 - \frac{\Gamma(2 \nu) \Gamma(n+1)}{\Gamma(n+2\nu)} \right).
\end{equation}
For $-\frac{1}{2} < \nu \leq \frac{1}{2}$, the sequence $Q_{n-1}^{(\nu)}(1)/P_{n}^{(\nu)}(1)$, $n \in \Nn$, diverges.
Therefore, for the ultraspherical polynomials $P_n^{(\nu)}(x)$, the constant $L_1$ in Lemma \ref{Lemma-zeroscodilated} is given by
\[ L_1 = \left\{ \begin{array}{cl} \frac{2\nu-1}{2\nu} & \qquad \text{if \quad $\frac{1}{2} < \nu$,} \\ 0 & \qquad \text{if\quad $- \frac{1}{2} < \nu \leq \frac{1}{2}$,} \end{array}\right. \]
and the statement of Lemma \ref{Lemma-zeroscodilated} holds for all $\lambda \leq \max \{1,\, 2\nu\}$.
\end{Lem}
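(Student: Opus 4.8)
The plan is to evaluate the ratio $Q_{n-1}^{(\nu)}(1)/P_n^{(\nu)}(1)$ directly from the integral representation \eqref{equation-seconddefinitionnumeratorpolynomials}, on which the computation of $L_1$ is meant to rest. Shifting the index in \eqref{equation-seconddefinitionnumeratorpolynomials} to $n-1$ and setting $x=1$ gives
\[ Q_{n-1}^{(\nu)}(1) = \frac{1}{\beta_0}\int_{-1}^1 \frac{P_n^{(\nu)}(1) - P_n^{(\nu)}(\xi)}{1-\xi}\, w_\nu(\xi)\, d\xi, \]
so that, after dividing by $P_n^{(\nu)}(1)$,
\[ \frac{Q_{n-1}^{(\nu)}(1)}{P_n^{(\nu)}(1)} = \frac{1}{\beta_0}\int_{-1}^1 \frac{1 - P_n^{(\nu)}(\xi)/P_n^{(\nu)}(1)}{1-\xi}\, w_\nu(\xi)\, d\xi. \]
First I would record the closed form $P_n^{(\nu)}(1) = (2\nu)_n/(2^n (\nu)_n)$ for the monic ultraspherical polynomials, which follows by a one-line induction from $P_{n+1}(1) = P_n(1) - \beta_n P_{n-1}(1)$ together with \eqref{equation-recursioncoefficientsultraspherical}.

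For $\nu > \tfrac12$ the combination $w_\nu(\xi)/(1-\xi) = (1-\xi)^{\nu-3/2}(1+\xi)^{\nu-1/2}$ is integrable on $[-1,1]$, so I would split the integral into two convergent pieces. The first piece $\frac{1}{\beta_0}\int_{-1}^1 w_\nu(\xi)/(1-\xi)\, d\xi$ is a Beta integral; using the value of $\beta_0$ and $\Gamma(\nu+\tfrac12) = (\nu-\tfrac12)\Gamma(\nu-\tfrac12)$ it evaluates to exactly the prefactor $\frac{2\nu}{2\nu-1}$. The $n$-dependence enters only through the second piece $\frac{1}{\beta_0 P_n^{(\nu)}(1)}\int_{-1}^1 P_n^{(\nu)}(\xi)\, w_\nu(\xi)/(1-\xi)\, d\xi$. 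To evaluate it I would insert the Rodrigues representation $P_n^{(\nu)}(\xi)\, w_\nu(\xi) = c_n\, \frac{d^n}{d\xi^n}(1-\xi^2)^{n+\nu-1/2}$ and integrate by parts $n$ times, the boundary terms vanishing precisely because $\nu > \tfrac12$. Since $\frac{d^n}{d\xi^n}(1-\xi)^{-1} = n!\,(1-\xi)^{-(n+1)}$, this collapses the expression to another Beta integral $\int_{-1}^1 (1-\xi)^{\nu-3/2}(1+\xi)^{n+\nu-1/2}\, d\xi$. After inserting the Rodrigues constant $c_n = (-1)^n\Gamma(n+2\nu)/\Gamma(2n+2\nu)$, fixed by comparing leading coefficients, and simplifying with the duplication formula, the second piece reduces to $\frac{2\nu}{2\nu-1}\cdot\frac{\Gamma(2\nu)\Gamma(n+1)}{\Gamma(n+2\nu)}$, and subtracting it from $\frac{2\nu}{2\nu-1}$ yields \eqref{equation-quotientnumerator}.

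For $-\tfrac12 < \nu \le \tfrac12$ the splitting fails, since $w_\nu/(1-\xi)$ is no longer integrable at $\xi=1$; this is exactly the signature of divergence. To argue it rigorously I would observe that, for fixed $n$, both $P_n^{(\nu)}(1)$ and $Q_{n-1}^{(\nu)}(1)$ are rational functions of $\nu$ (they are built from the rational coefficients $\beta_k$ via the two recurrences), so the identity \eqref{equation-quotientnumerator}, already verified on the interval $\nu > \tfrac12$, persists as an identity of rational functions wherever it is defined. The asymptotics $\frac{\Gamma(2\nu)\Gamma(n+1)}{\Gamma(n+2\nu)}\sim \Gamma(2\nu)\, n^{1-2\nu}$ then show this factor tends to $0$ for $\nu>\tfrac12$ and to $+\infty$ for $\nu<\tfrac12$, while at the critical value $\nu=\tfrac12$ the indeterminate form resolves (by a $\nu\to\tfrac12$ limit) to the harmonic sum $\sum_{k=1}^n 1/k$, which also diverges. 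Hence $\lim_n Q_{n-1}^{(\nu)}(1)/P_n^{(\nu)}(1)$ equals $\frac{2\nu}{2\nu-1}$ for $\nu>\tfrac12$ and diverges otherwise. Since $P_1^{(\nu)}(1)=1$ makes $L_1 = \lim_n P_n^{(\nu)}(1)/Q_{n-1}^{(\nu)}(1)$, this gives $L_1 = \frac{2\nu-1}{2\nu}$ for $\nu>\tfrac12$ and $L_1 = 0$ otherwise; the threshold $\frac{1}{1-L_1}$ of \eqref{equation-zeroscodilated} in Lemma \ref{Lemma-zeroscodilated} then equals $2\nu$ respectively $1$, i.e. $\max\{1,\,2\nu\}$.

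I expect the main obstacle to be the second integral, namely extracting the factor $\Gamma(2\nu)\Gamma(n+1)/\Gamma(n+2\nu)$ honestly. This hinges on the Rodrigues representation with the correct normalization $c_n$ and on justifying that every boundary term in the $n$-fold integration by parts vanishes, which holds exactly in the claimed range $\nu>\tfrac12$ and is precisely what forces the dichotomy at $\nu=\tfrac12$. The ensuing Gamma-function bookkeeping (duplication formula and cancellation against $\beta_0$ and $P_n^{(\nu)}(1)$) is routine but must be carried through so that the two Beta integrals combine into the single clean prefactor $\frac{2\nu}{2\nu-1}$. A secondary point requiring care is the degenerate value $\nu=0$, where $(\nu)_n$ vanishes and which I would treat as a limiting case.
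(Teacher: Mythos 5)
Your proposal is correct, and its first half is essentially the paper's own argument: the paper also starts from the integral representation \eqref{equation-seconddefinitionnumeratorpolynomials} at $x=1$, splits off the Beta integral $\frac{1}{\beta_0}\int_{-1}^1 w_\nu(\xi)(1-\xi)^{-1}d\xi = \frac{2\nu}{2\nu-1}$, and evaluates the remaining piece via the Rodriguez formula and $n$-fold integration by parts (your Rodrigues constant $(-1)^n\Gamma(n+2\nu)/\Gamma(2n+2\nu)$ is the monic one and is consistent with the paper's, which absorbs the division by $P_n^{(\nu)}(1)$). The only structural difference in this half is that the paper carries a general exponent $\eps$ in $(1-\xi)^{-\eps}$, of which your computation is the special case $\eps=1$. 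Where you genuinely diverge from the paper is the case $-\tfrac12<\nu\le\tfrac12$: the paper re-uses its $\eps$-parametrized integral formula to produce lower bounds $Q_{n-1}^{(\nu)}(1)/P_n^{(\nu)}(1)\ge \Gamma(\nu+\tfrac12-\eps)/(4\beta_0)$ for large $n$, and lets $\eps\uparrow\nu+\tfrac12$ to force divergence; you instead observe that both sides of \eqref{equation-quotientnumerator} are rational in $\nu$ for fixed $n$ (since $\Gamma(2\nu)\Gamma(n+1)/\Gamma(n+2\nu)=n!/(2\nu)_n$), so the identity proved for $\nu>\tfrac12$ continues to all $\nu>-\tfrac12$, and divergence follows from the asymptotics $n!/(2\nu)_n\sim\Gamma(2\nu)\,n^{1-2\nu}$ together with the limit values at the removable singularities ($H_n=\sum_{k=1}^n 1/k$ at $\nu=\tfrac12$, and $n$ at $\nu=0$). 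The paper's route stays entirely inside real integration and needs no continuation argument, at the cost of dragging $\eps$ through the integration by parts; your route keeps the hard computation to the single clean case $\eps=1$ and yields strictly more information in the divergent regime (exact values rather than lower bounds), at the cost of the rationality/continuation step and the care at $\nu=0,\tfrac12$, which you correctly flag. One small slip: for $-\tfrac12<\nu<0$ one has $\Gamma(2\nu)<0$, so the factor $\Gamma(2\nu)\Gamma(n+1)/\Gamma(n+2\nu)$ tends to $-\infty$, not $+\infty$; since it is multiplied by $\tfrac{2\nu}{2\nu-1}>0$ after subtraction from $1$, the quotient still tends to $+\infty$, so the conclusion is unaffected.
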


\begin{proof}
For $\nu > - \frac{1}{2}$, let $0 < \eps < \nu + \frac{1}{2}$. Then, the function $(1+x)^{\nu-\frac{1}{2}}(1-x)^{\nu-\frac{1}{2}-\eps}$ is integrable on $[-1,1]$ and we have (cf. \cite[equation (4.02)]{Ismail})
\begin{equation} \label{equation-integralformula1}
 \int_{-1}^1  (1+x)^{\nu-\frac{1}{2}}(1-x)^{\nu-\frac{1}{2}-\eps} dx = \frac{2^{2\nu-\eps}\Gamma(\nu+\frac{1}{2}) \Gamma(\nu-\eps + \frac{1}{2})}{\Gamma(2\nu-\eps+1)}.
\end{equation}
More generally, using the Rodriguez formula \cite[(4.7.12)]{Szegö} for the ultraspherical polynomials $P_n^{(\nu)}$, $n \geq 0$, we get the following integral formula
\begin{align*} 
\int_{-1}^1 \frac{P_{n}^{(\nu)}(x) (1-x^2)^{\nu-\frac{1}{2}} }{P_{n}^{(\nu)}(1) (1-x)^{\eps} }  dx &= \frac{(-1)^n \Gamma(\nu + \frac{1}{2})}{2^n \Gamma(n+\nu+\frac{1}{2})}
\int_{-1}^1 \left( \frac{d}{dx}\right)^n \left[(1-x^2)^{\nu - \frac{1}{2} + n}\right] \frac{1}{(1-x)^{\eps}} dx. 
 \end{align*}
Integration by parts of the right hand side yields (using, as in equation \eqref{equation-integralformula1}, \cite[(4.02)]{Ismail})
\begin{align} 
\int_{-1}^1 \frac{P_{n}^{(\nu)}(x) (1-x^2)^{\nu-\frac{1}{2}} }{P_{n}^{(\nu)}(1) (1-x)^{\eps} }  dx &= \frac{ \Gamma(n + \eps) \Gamma(\nu + \frac{1}{2})}{2^n \Gamma(n+\nu+\frac{1}{2})}
\int_{-1}^1 (1-x^2)^{\nu - \frac{1}{2} + n} \frac{1}{(1-x)^{n+\eps}} dx. \notag \\
&= \frac{ \Gamma(n + \eps) \Gamma(\nu + \frac{1}{2})}{2^n \Gamma(\eps)  \Gamma(n+\nu+\frac{1}{2})} \frac{2^{2\nu-\eps+n}\Gamma(n+\nu+\frac{1}{2}) \Gamma(\nu-\eps + \frac{1}{2})}{\Gamma(n+2\nu-\eps+1)} \notag \\
&= \frac{2^{2\nu-\eps}\Gamma(\nu+\frac{1}{2}) \Gamma(\nu-\eps + \frac{1}{2}) \Gamma(n + \eps)}{\Gamma(\eps)\Gamma(n+2\nu-\eps+1)}. \label{equation-integralformula2}
 \end{align}
Now, if $\nu > \frac{1}{2}$, we can choose $\eps = 1$ and formula \eqref{equation-seconddefinitionnumeratorpolynomials} for the numerator polynomials in combination 
with \eqref{equation-integralformula1} and \eqref{equation-integralformula2} gives
\begin{align*} 
\frac{Q_{n-1}^{(\nu)}(1)}{P_{n}^{(\nu)}(1)} &= \frac{1}{\beta_0} \int_{-1}^1 \left(1 - \frac{P_{n}^{(\nu)}(\xi)}{P_{n}^{(\nu)}(1)}\right) \frac{w_\nu(\xi)}{1-\xi} d\xi \\
&= \frac{2^{2\nu-1}\Gamma(\nu+\frac{1}{2})\Gamma(\nu-\frac{1}{2})}{\beta_0 \Gamma(2\nu)} - \frac{2^{2\nu-1}\Gamma(\nu+\frac{1}{2})\Gamma(\nu-\frac{1}{2}) \Gamma(n + 1)}{\beta_0 \Gamma(n+2\nu)} \\
&= \frac{2 \nu}{ 2 \nu -1} \left(1 - \frac{\Gamma(2 \nu) \Gamma(n+1)}{\Gamma(n+2\nu)} \right).
\end{align*}
On the other hand, if $- \frac{1}{2} < \nu \leq \frac{1}{2}$, we choose $\eps < \nu + \frac{1}{2}$ and get with \eqref{equation-integralformula1} and \eqref{equation-integralformula2}:
\begin{align*} 
\frac{Q_{n-1}^{(\nu)}(1)}{P_{n}^{(\nu)}(1)} & \geq \frac{1}{\beta_0} \int_{-1}^1 \left(1 - \frac{P_{n}^{(\nu)}(\xi)}{P_{n}^{(\nu)}(1)}\right) \frac{w_\nu(\xi)}{(1-\xi)^\eps} d\xi \\
&= \frac{2^{2\nu-\eps}\Gamma(\nu+\frac{1}{2})\Gamma(\nu+\frac{1}{2}-\eps)}{\beta_0 \Gamma(2\nu-\eps+1)}\left(1 - \frac{\Gamma(2 \nu-\eps+1) \Gamma(n+\eps)}{\Gamma(\eps) \Gamma(n+2\nu-\eps+1)} \right)\\
& \geq  \frac{\Gamma(\nu+\frac{1}{2}-\eps)}{2 \beta_0} \left(1 - \frac{\Gamma(n+\eps)}{\Gamma(n+2\nu-\eps+1)} \right).
\end{align*}
Therefore, we can find an $n_\eps \in \Nn$ such that
\begin{align*} 
\frac{Q_{n-1}^{(\nu)}(1)}{P_{n}^{(\nu)}(1)} & \geq \frac{\Gamma(\nu+\frac{1}{2}-\eps)}{4\beta_0} \qquad \text{for all $n \geq n_\eps$.}
\end{align*}
Since $\eps$ can be chosen arbitrarily close to $\nu + \frac{1}{2}$, the term on the right hand side can be arbitrarily large. Hence, in this case the
sequence $Q_{n-1}^{(\nu)}(1)/P_{n}^{(\nu)}(1)$, $n \in \Nn$, diverges. The formulas for the constant $L_1$ follow from the definition $L_1 = \lim_{n \to \infty} \frac{P_n^{(\nu)}(1)}{Q_{n-1}^{(\nu)}(1)}$.
\end{proof}

\begin{Lem} \label{Lemma-codilatedbounded}
For $\nu > \frac{1}{2}$, the polynomials $P_n^{(\nu)*}(x)/P_n^{(\nu)*}(1)$ are uniformly bounded on $[-1,1]$ by
\begin{equation}
\left| \frac{P_n^{(\nu)*}(x)}{P_n^{(\nu)*}(1)} \right| \leq \left\{ \begin{array}{cl} 1 & \qquad \text{if \quad $0 \leq \lambda \leq 1$,} \\ \frac{  |2 \nu (2 \lambda - 1) - \lambda|}{2 \nu - \lambda} & \qquad 
\text{if \quad $1 < \lambda < 2\nu$ \; or \; $\lambda < 0$.} \end{array}\right.
\end{equation}
\end{Lem}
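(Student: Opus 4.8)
The plan is to reduce everything to the explicit representation \eqref{equation-characteristiccodilated}. Specialised to $m=1$, where $P_1^{(\nu)}(x)=x$ and the first numerator polynomial is $Q_{n-1}^{(\nu)}=P_{n-1}^{(\nu)(1)}$, it reads
\[ P_n^{(\nu)*}(x) = \lambda\, P_n^{(\nu)}(x) + (1-\lambda)\, x\, Q_{n-1}^{(\nu)}(x), \qquad n\geq 2. \]
Writing $R_n := Q_{n-1}^{(\nu)}(1)/P_n^{(\nu)}(1)$, so that $P_n^{(\nu)*}(1)=\bigl(\lambda+(1-\lambda)R_n\bigr)P_n^{(\nu)}(1)$, the normalised polynomial becomes
\[ \frac{P_n^{(\nu)*}(x)}{P_n^{(\nu)*}(1)} = c_n\,\frac{P_n^{(\nu)}(x)}{P_n^{(\nu)}(1)} + d_n\,\frac{x\,Q_{n-1}^{(\nu)}(x)}{Q_{n-1}^{(\nu)}(1)}, \qquad c_n+d_n=1, \]
with $c_n=\lambda P_n^{(\nu)}(1)/P_n^{(\nu)*}(1)$ and $d_n=(1-\lambda)Q_{n-1}^{(\nu)}(1)/P_n^{(\nu)*}(1)$. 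By Lemma \ref{Lemma-criticalconstantultraspherical} the denominator stays positive for $\lambda\leq 2\nu$, and the quotient $R_n$ increases monotonically to $1/L_1=\frac{2\nu}{2\nu-1}$.

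The first ingredient I would establish is the endpoint--maximum bound. For $\nu>0$ the monic ultraspherical polynomials satisfy $|P_n^{(\nu)}(x)|\leq P_n^{(\nu)}(1)$ on $[-1,1]$ (their Gegenbauer-normalised versions have nonnegative cosine-expansion coefficients, equivalently they are Jacobi polynomials with $\alpha=\beta\geq-\tfrac12$), and the same endpoint bound holds for the numerator polynomials $Q_{n-1}^{(\nu)}$, which are themselves associated ultraspherical polynomials orthogonal on $[-1,1]$. Hence both fractions above are bounded by $1$ in modulus, and since $|x|\leq 1$ also $\bigl|\tfrac{xQ_{n-1}^{(\nu)}(x)}{Q_{n-1}^{(\nu)}(1)}\bigr|\leq 1$. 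For $0\leq\lambda\leq 1$ the coefficients $c_n,d_n$ are both nonnegative with $c_n+d_n=1$, so the right-hand side is a genuine convex combination and $\bigl|\tfrac{P_n^{(\nu)*}(x)}{P_n^{(\nu)*}(1)}\bigr|\leq 1$, which is the first case.

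For the remaining ranges the two coefficients have opposite signs, and the triangle inequality gives
\[ \left|\frac{P_n^{(\nu)*}(x)}{P_n^{(\nu)*}(1)}\right| \leq |c_n|+|d_n| = \frac{|\lambda|+|1-\lambda|\,R_n}{\lambda+(1-\lambda)R_n}. \]
When $1<\lambda<2\nu$ the right-hand side is an increasing function of $R_n$, so letting $R_n\nearrow\frac{2\nu}{2\nu-1}$ and simplifying produces exactly $\frac{2\nu(2\lambda-1)-\lambda}{2\nu-\lambda}$; substituting the explicit value of $R_n$ from \eqref{equation-quotientnumerator} and checking the sign then shows the estimate holds for every $n\geq 2$, which settles this case.

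The main obstacle is the regime $\lambda<0$. Here the monotone quantity $\frac{|\lambda|+|1-\lambda|R_n}{\lambda+(1-\lambda)R_n}$ is \emph{decreasing} in $R_n$, so the crude triangle estimate is governed by the smallest admissible index rather than by the limit and it overshoots the asserted constant. The point to be argued carefully is therefore that the true supremum is still controlled by the limiting configuration $R_n\to\frac{2\nu}{2\nu-1}$: since $\frac{P_n^{(\nu)}(x)}{P_n^{(\nu)}(1)}$ and $\frac{x\,Q_{n-1}^{(\nu)}(x)}{Q_{n-1}^{(\nu)}(1)}$ agree and equal $\pm1$ at $x=\pm1$, they cannot reach opposite extreme values simultaneously, so the cancellation inherent in $c_n<0<d_n$ keeps the combination well below $|c_n|+|d_n|$. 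I expect this correlation — quantifying how $1-\frac{P_n^{(\nu)}(x)}{P_n^{(\nu)}(1)}$ compares with $1-\frac{x\,Q_{n-1}^{(\nu)}(x)}{Q_{n-1}^{(\nu)}(1)}$ near the endpoints — to be the hard part, after which the common value $\frac{|2\nu(2\lambda-1)-\lambda|}{2\nu-\lambda}=\frac{|\lambda|+|1-\lambda|\frac{2\nu}{2\nu-1}}{\lambda+(1-\lambda)\frac{2\nu}{2\nu-1}}$ is the natural upper bound covering both sign-cancellation regimes.
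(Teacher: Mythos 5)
Your first two cases are correct and are, step for step, the paper's own proof: for $0\leq\lambda\leq1$ the convex-combination argument with the endpoint bounds $|P_n^{(\nu)}(x)|\leq P_n^{(\nu)}(1)$, $|Q_{n-1}^{(\nu)}(x)|\leq Q_{n-1}^{(\nu)}(1)$, and for $1<\lambda<2\nu$ the triangle inequality followed by monotonicity in $R_n$, with $R_n\nearrow\frac{2\nu}{2\nu-1}$ producing the constant $\frac{2\nu(2\lambda-1)-\lambda}{2\nu-\lambda}$. The genuine gap is the case $\lambda<0$, which you leave unproven: the ``correlation near the endpoints'' argument you sketch is only a plan, and your own observation shows that nothing short of such an extra idea can work. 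Indeed, since the triangle-inequality quantity $\frac{-\lambda+(1-\lambda)R_n}{\lambda+(1-\lambda)R_n}$ is decreasing in $R_n$ and $R_n<\frac{2\nu}{2\nu-1}$ for every finite $n$, it is \emph{strictly larger} than the asserted constant for every $n$; concretely, for $\nu=1$, $\lambda=-1$, $n=2$ one has $R_2=\frac{4}{3}$ (since $Q_1^{(1)}(1)=1$, $P_2^{(1)}(1)=\frac34$), the triangle bound equals $\frac{11}{5}$, while the lemma asserts $\frac{5}{3}$. So as a proof of the stated lemma the proposal is incomplete.

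What is worth recording, however, is that the obstacle you identified is real and is not overcome by the paper either: the paper treats $1<\lambda<2\nu$ and $\lambda<0$ in a single display, passing from the triangle-inequality bound to the limit value via
\begin{equation*}
\left| \frac{P_n^{(\nu)}(1) - \frac{1-\lambda}{\lambda}\, Q_{n-1}^{(\nu)}(1)}{P_n^{(\nu)}(1) + \frac{1-\lambda}{\lambda}\, Q_{n-1}^{(\nu)}(1)} \right| \;\leq\; \left| \frac{L_1 - \frac{1-\lambda}{\lambda}}{L_1 + \frac{1-\lambda}{\lambda}} \right|,
\end{equation*}
justified only by $L_1\leq t_n:=P_n^{(\nu)}(1)/Q_{n-1}^{(\nu)}(1)$. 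This replacement is legitimate exactly when the left side is decreasing in $t_n$, which holds for $1<\lambda<2\nu$ because then $\frac{\lambda-1}{\lambda}<L_1\leq t_n$; but for $\lambda<0$ one has $a:=\frac{\lambda-1}{\lambda}=\frac{1+|\lambda|}{|\lambda|}>1\geq t_n$, the quantity becomes $\frac{t_n+a}{a-t_n}$, which is \emph{increasing} in $t_n$, and the inequality reverses (same example: $\frac{11}{5}\not\leq\frac{5}{3}$). So your diagnosis is sharper than the paper's proof, which is invalid in this regime even though the statement itself appears to be true. If you want to actually close the case $\lambda<0$, a workable route is the representation $P_n^{(\nu)*}(x)=P_n^{(\nu)}(x)+(1-\lambda)\beta_1 P_{n-2}^{(\nu)(2)}(x)$, where $P_{n-2}^{(\nu)(2)}$ is the second-order numerator polynomial from \eqref{equation-3termrecurrencenumerator} with $m=2$; this generalizes the third identity in \eqref{equation-definitioncodilatedchebyshev} and follows by induction from the recurrences. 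For $\lambda\leq1$ both coefficients are nonnegative, so granted the endpoint bound $|P_{n-2}^{(\nu)(2)}(x)|\leq P_{n-2}^{(\nu)(2)}(1)$ (the order-two analogue of the bounds the paper quotes from Szwarc), one obtains the stronger estimate with constant $1$ for all $\lambda\leq 1$, which in particular covers $\lambda<0$.
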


\begin{proof}
For the ultraspherical polynomials with the parameter $\nu \geq 0$, it is well-known (see \cite{Szwarc2005} and the references therein) that $|P_n^{(\nu)}(x)| \leq P_n^{(\nu)}(1)$ and $|Q_n^{(\nu)}(x)| \leq Q_n^{(\nu)}(1)$ holds for all $x \in [-1,1]$, $n \in \Nn$. In the case $0 \leq \lambda \leq 1$, 
we get therefore by formula \eqref{equation-characteristiccodilated} the upper bound
\begin{equation*} 
\left| \frac{P_n^{(\nu)*}(x)}{P_n^{(\nu)*}(1)} \right|  
= \left| \frac{\lambda P_n^{(\nu)}(x) + (1-\lambda) x Q_{n-1}^{(\nu)}(x)}{\lambda P_n^{(\nu)}(1) + (1-\lambda) Q_{n-1}^{(\nu)}(1)} \right| \leq 
\frac{\lambda P_n^{(\nu)}(1) + (1-\lambda) Q_{n-1}^{(\nu)}(1)}{\lambda P_n^{(\nu)}(1) + (1-\lambda) Q_{n-1}^{(\nu)}(1)} = 1.
\end{equation*}
Similarly, we get for $1 < \lambda < 2 \nu$ or $\lambda < 0$:
\begin{align*} 
\left| \frac{P_n^{(\nu)*}(x)}{P_n^{(\nu)*}(1)} \right|  
& = \left| \frac{ P_n^{(\nu)}(x) + \frac{1-\lambda}{\lambda} Q_{n-1}^{(\nu)}(x)}{P_n^{(\nu)}(1) + \frac{1-\lambda}{\lambda} Q_{n-1}^{(\nu)}(1)} \right|
\leq \left| \frac{ P_n^{(\nu)}(1) - \frac{1-\lambda}{\lambda} Q_{n-1}^{(\nu)}(1)}{P_n^{(\nu)}(1) + \frac{1-\lambda}{\lambda} Q_{n-1}^{(\nu)}(1)} \right| \\
& \leq \left| \frac{ L_1 - \frac{1-\lambda}{\lambda} }{L_1 + \frac{1-\lambda}{\lambda} } \right| = 
\left| \frac{ \frac{2\nu-1}{2\nu} - \frac{1-\lambda}{\lambda} }{\frac{2\nu-1}{2\nu} + \frac{1-\lambda}{\lambda} } \right| = \frac{|2\nu(2\lambda-1) - \lambda|}{2\nu-\lambda}.
\end{align*}
In the last inequality, we used the fact that $L_1 = \frac{2\nu-1}{2\nu} =\underset{n \to \infty}{\lim} \frac{P_{n}^{(\nu)}(1)}{Q_{n-1}^{(\nu)}(1)} \leq \frac{P_{n}^{(\nu)}(1)}{Q_{n-1}^{(\nu)}(1)}$ holds 
for all $n \in \Nn$ (see formula \eqref{equation-chaininequality} in the proof of Lemma \ref{Lemma-zeroscodilated}).
\end{proof}

The weight function $w_\nu^{(1)}$ of the numerator polynomials $Q_n^{(\nu)}$ is supported on the interval $[-1,1]$ and can be stated explicitely as (see \cite[formulas (28) and (106)]{Grosjean1986})
\begin{align} \label{equation-formulaweightnumerator}
w_\nu^{(1)}(x) &:= \Phi_\nu(x) (1-x^2)^{\nu-\frac{1}{2}}, \quad \Phi_\nu(x):= \frac{1}{q_\nu(x)^2+\frac{\pi^2}{\beta_0^2}w_\nu(x)^2}, \\
q_{\nu}(x) &= \int_{0}^{x} \frac{2 \nu \, w_\nu(x) }{(1-t^2)^{\nu+\frac{1}{2}}} dt. \label{equation-formulasubweightnumerator}
\end{align}
We will soon see that for $\lambda < 2\nu$ and $x \in ]-1,1[$, the normalized co-dilated ultraspherical polynomials $P_n^{(\nu)*}(x)/P_n^{(\nu)*}(1)$ converge pointwise to zero.
The proof is based on the fact that for $\nu > \frac{1}{2}$ the weight function $w_{\nu}^{(1)}$ is a generalized Jacobi weight (see \cite[Definition 9.28]{Nevai}), i.e. $w_{\nu}^{(1)}$ is 
of the form \eqref{equation-formulaweightnumerator} with a continuous 
and strictly positive function $\Phi_\nu(x)$ on $[-1,1]$ whose modulus of continuity $\omega$ satisfies $\int_0^1 \delta^{-1} \omega(\Phi_\nu,\delta) d \delta < \infty$.  

\begin{Lem} 
For $\nu > \frac{1}{2}$, the weight function $w_{\nu}^{(1)}$ is a generalized Jacobi weight. 
\end{Lem}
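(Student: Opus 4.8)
The plan is to verify directly the three requirements in the definition of a generalized Jacobi weight for $\Phi_\nu$: that $\Phi_\nu$ extends to a continuous, strictly positive function on $[-1,1]$ whose modulus of continuity satisfies the Dini condition $\int_0^1 \delta^{-1}\omega(\Phi_\nu,\delta)\,d\delta < \infty$. Since $\Phi_\nu(x) = 1/D(x)$ with $D(x) := q_\nu(x)^2 + \frac{\pi^2}{\beta_0^2}w_\nu(x)^2$, everything reduces to controlling the denominator $D$. The only delicate point, and the main obstacle, is the behaviour of $q_\nu$ at the endpoints $x = \pm 1$, where $w_\nu$ vanishes: there $q_\nu$ is the product of the vanishing factor $w_\nu$ with a divergent integral, an indeterminate $0\cdot\infty$ form, and the strict positivity of $\Phi_\nu$ at $\pm 1$ rests entirely on showing that the resulting finite limit is nonzero.

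First I would analyse $q_\nu$ at the endpoints. Writing $q_\nu(x) = 2\nu\, N(x)/M(x)$ with $N(x) = \int_0^x (1-t^2)^{-\nu-\frac12}\,dt$ and $M(x) = (1-x^2)^{\frac12-\nu}$, the hypothesis $\nu > \frac12$ gives $\nu + \frac12 > 1$ and $\frac12 - \nu < 0$, so both $N(x)$ and $M(x)$ tend to $+\infty$ as $x \to 1^-$. An application of l'H\^opital's rule yields $\lim_{x\to1} N(x)/M(x) = 1/(2\nu-1)$, hence $q_\nu(1) = \frac{2\nu}{2\nu-1}$. Because the integrand $(1-t^2)^{-\nu-\frac12}$ is even and $w_\nu$ is even, $q_\nu$ is an odd function, so $q_\nu(-1) = -\frac{2\nu}{2\nu-1}$. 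Thus $q_\nu$ extends continuously to $[-1,1]$ and is nonzero at both endpoints.

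This gives at once the continuity and strict positivity of $\Phi_\nu$. On the open interval $w_\nu(x) > 0$ forces $D(x) > 0$, while at $x = \pm 1$, where $w_\nu$ vanishes, we have $D(\pm 1) = q_\nu(\pm1)^2 = \bigl(\tfrac{2\nu}{2\nu-1}\bigr)^2 > 0$. Hence $D$ is continuous and strictly positive on the compact interval $[-1,1]$, so it attains a positive minimum, and $\Phi_\nu = 1/D$ is continuous and strictly positive.

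For the Dini condition I would show that $\Phi_\nu$ is H\"older continuous, which suffices since $\omega(\Phi_\nu,\delta) \le C\delta^\alpha$ gives $\int_0^1 \delta^{-1}\omega(\Phi_\nu,\delta)\,d\delta \le C\int_0^1 \delta^{\alpha-1}\,d\delta < \infty$. On compact subintervals of $(-1,1)$ both $q_\nu$ and $w_\nu$ are $C^\infty$, so only the rate of approach to the endpoint values matters. The factor $w_\nu(x) = (1-x^2)^{\nu-\frac12}$ is H\"older with exponent $\min\{1,\nu-\tfrac12\} > 0$. For $q_\nu$, a refinement of the previous computation, examining $R(x) := (2\nu-1)N(x) - M(x)$, whose derivative simplifies to $R'(x) = (2\nu-1)(1-x)^{\frac12-\nu}(1+x)^{-\nu-\frac12}$, together with the identity $q_\nu(x) - q_\nu(1) = \frac{2\nu}{2\nu-1}\,R(x)/M(x)$, shows $q_\nu(x) - q_\nu(1) = O\bigl((1-x)^{\gamma}\bigr)$ with $\gamma = \min\{1,\nu-\tfrac12\}$ (a harmless logarithmic factor appearing only in the borderline case $\nu = \tfrac32$), and symmetrically at $-1$. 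Consequently $D$ is H\"older on $[-1,1]$, and since $D$ is bounded below by a positive constant, so is $\Phi_\nu = 1/D$. This establishes the Dini condition and completes the proof.
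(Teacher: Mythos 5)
Your proposal is correct, and it shares the paper's overall skeleton: both reduce the lemma to showing that $\Phi_\nu$ is strictly positive and H\"older continuous on $[-1,1]$ (whence the Dini condition is immediate), and both identify the only delicate issue as the behaviour of $q_\nu$ at $x=\pm1$, where one must show that $q_\nu$ has a finite nonzero limit and approaches it at a H\"older rate. The difference lies in how that endpoint analysis is carried out. The paper follows Szeg\H{o}'s treatment of the Jacobi functions of the second kind: it expands the factor $(1+t)^{-\nu-\frac12}$ of the integrand in a power series in $(1-t)$ and integrates termwise, obtaining $q_\nu(x) = C(1-x^2)^{\nu-\frac12} + M_1\left(\frac{1-x}{2}\right)(1+x)^{\nu-\frac12}$ with $M_1(0)\neq 0$ when $\nu-\frac12$ is not an integer, and an analogous formula with a logarithmic term when $n=\nu-\frac12$ is an integer; nonvanishing at $x=1$ and the H\"older exponent $\min\{1,\nu-\frac12\}$ are then read off, with the two cases treated separately. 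You instead obtain the limit $q_\nu(1)=\frac{2\nu}{2\nu-1}$ by l'H\^opital and the rate from the derivative of $R=(2\nu-1)N-M$; your formula $R'(x)=(2\nu-1)(1-x)^{\frac12-\nu}(1+x)^{-\nu-\frac12}$ is correct, and integrating it does give $q_\nu(x)-q_\nu(1)=O\bigl((1-x)^{\min\{1,\nu-\frac12\}}\bigr)$ with a logarithmic correction exactly at $\nu=\frac32$, reproducing the paper's conclusions (Lipschitz for integer $n\ge 2$, arbitrary H\"older exponent $<1$ at $n=1$). Your route is more elementary --- no series expansion, and essentially no case split beyond noting the borderline logarithm --- while the paper's expansion yields more structural information about $q_\nu$ near the endpoints than the lemma actually requires; both deliver the same exponents and the same conclusion.
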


\begin{proof}
We show that for $\nu > \frac{1}{2}$ the function $\Phi_\nu$ is strictly positive and Hölder-continuous on $[-1,1]$. Then, it follows immediately that $\int_0^1 \delta^{-1} \omega(\Phi_\nu,\delta) d \delta < \infty$ holds and, thus, that $w_\nu^{(1)}$ is a generalized Jacobi weight. Clearly, the weight function $w_\nu(x) = (1-x^2)^{\nu - \frac{1}{2}}$ of the ultraspherical polynomials is Hölder-continuous on $[-1,1]$ with exponent $\min \{ 1,\nu-\frac{1}{2}\}$. The function $q_\nu$ on the other hand is continuously differentiable on the open interval $(-1,1)$. So, to complete the proof it remains to show that $q_\nu$ satisfies a Hölder-condition and is nonzero at $x = 1$ and $x = -1$. Because of the symmetry of the weight function $w_{\nu}^{(1)}$, we have to study the behavior of $q_\nu(x)$ only at $x = 1$. To investigate the integral formula, we proceed similar as Szegö in \cite[Section 4.62]{Szegö} for the Jacobi polynomials of the second kind. We expand the factor $(1+t)^{\nu + \frac{1}{2}}$ in the integral formula
\eqref{equation-formulasubweightnumerator} of $q_\nu(x)$ in a power series in the variable $(1-t)$. Then, if $\nu > \frac{1}{2}$ and $\nu - \frac{1}{2}$ is not an integer we obtain
\[ q_\nu(x) = C (1-x^2)^{\nu - \frac{1}{2}} + M_1\left(\frac{1-x}{2}\right) (1+x)^{\nu-\frac{1}{2}},\]
with a power series $M_1(y)$ convergent for $|y|< 1$ and $M_1(0) \neq 0$. Thus, in this case the function $q_\nu$ is nonzero at $x=1$ and satisfies a Hölder-condition with exponent $\min\{\nu-\frac{1}{2},1\}$. If $n = \nu-\frac{1}{2} \geq 1$ is an integer, we get in the integrand of \eqref{equation-formulasubweightnumerator} the power series expansion (see \cite[p. 76]{Szegö}) 
\[\frac{1}{(1-t^2)^{n+1}} = \frac{\left( 1-\frac{1-t}{2} \right)^{-n-1}}{2^{n+1}(1-t)^{n+1}} = \frac{1}{2^{n+1}(1-t)^{n+1}} \left( 1 + \cdots + \binom{2n}{n}\left(\frac{1-t}{2}\right)^{n} + \cdots \right).\]
Integrating with respect to $t$ yields a logarithmic term and a power series $M_2(y)$ with $M_2(0) \neq 0$ converging for $|y|< 1$ such that 
\[ q_\nu(x) = C (1-x^2)^{n} \log \left( \frac{1}{1-x}\right) + M_2 \left(\frac{1-x}{2}\right) (1+x)^{n}.\]
Thus, in this case $q_\nu(x)$ is Lipschitz-continuous at $x = 1$ if $n \geq 2$, and Hölder-continuous with an arbitrary coefficient $0 < \alpha < 1$ if $n= 1$. 
\end{proof}

\begin{Thm} \label{Theorem-convergenceultraspherical} For $\nu > \frac{1}{2}$ and $\lambda < 2\nu$, the co-dilated ultraspherical polynomials $P_n^{(\nu)*}$ satisfy the estimate
\[ \left| \frac{P_n^{(\nu)*}(x)}{P_n^{(\nu)*}(1)}\right| 
\leq \frac{ 1+|\lambda|}{(2\nu-\lambda)} \frac{C_{\nu}}{(\sqrt{1-x^2} \, n + 1)^{\nu}} \]
with a constant $C_\nu$ independent of $n$, $x$ and $\lambda$. For the respective residual polynomials $r_n^{(\nu)*}(y) = \frac{P_n^{(\nu)*}(1 - 2y)}{P_n^{(\nu)*}(1)}$ on $[0,1]$ and 
the modulus of convergence $\eps_\nu^S(n)$, we get
\[  \left| r_n^{(\nu)*}(y)\right| \leq    \frac{1+|\lambda|}{(2\nu-\lambda)} \frac{ C_\nu}{ (2 \sqrt{y(1-y)}\,n+1)^\nu},\qquad \eps_\nu^S(n) \leq \frac{1+|\lambda|}{(2\nu-\lambda)} \frac{C_\nu }{(2n)^\nu}.\]
\end{Thm}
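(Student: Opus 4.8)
The plan is to reduce the bound to pointwise decay estimates on $[-1,1]$ for the two polynomials that make up $P_n^{(\nu)*}$. Specialising the representation \eqref{equation-characteristiccodilated} to $m=1$ and $P_1^{(\nu)}(x)=x$ gives
\[ P_n^{(\nu)*}(x)=\lambda\,P_n^{(\nu)}(x)+(1-\lambda)\,x\,Q_{n-1}^{(\nu)}(x), \]
so that $P_n^{(\nu)*}(x)/P_n^{(\nu)*}(1)$ is a $\lambda$-dependent combination of $P_n^{(\nu)}/P_n^{(\nu)}(1)$ and $Q_{n-1}^{(\nu)}/Q_{n-1}^{(\nu)}(1)$, weighted by the endpoint values $P_n^{(\nu)}(1)$ and $Q_{n-1}^{(\nu)}(1)$. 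First I would prove the two uniform estimates
\[ \left|\frac{P_n^{(\nu)}(x)}{P_n^{(\nu)}(1)}\right|\le\frac{C_\nu}{(\sqrt{1-x^2}\,n+1)^\nu},\qquad\left|\frac{Q_{n-1}^{(\nu)}(x)}{Q_{n-1}^{(\nu)}(1)}\right|\le\frac{C_\nu}{(\sqrt{1-x^2}\,n+1)^\nu}, \]
and then combine them with a lower bound for the normalising factor $P_n^{(\nu)*}(1)$.

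The first estimate is the classical uniform bound for ultraspherical polynomials: with $x=\cos\theta$ it reads $|P_n^{(\nu)}(\cos\theta)/P_n^{(\nu)}(1)|\le C_\nu(n\sin\theta+1)^{-\nu}$ and follows from the standard Szeg\H{o} asymptotics. The second estimate is the heart of the argument and is exactly what the generalised Jacobi character of $w_\nu^{(1)}$ was established for. Since $w_\nu^{(1)}$ is a generalised Jacobi weight whose exponent at both endpoints $x=\pm1$ equals $\nu-\tfrac12$, the general uniform estimates for orthogonal polynomials with respect to such weights (\cite{Nevai} and the Badkov-type bounds quoted there) give, for the associated orthonormal polynomials, growth of order $n^\nu$ at the endpoints together with decay of order $\theta^{-\nu}$ away from them; normalising at $x=1$ and matching the two regimes yields precisely the factor $(n\sin\theta+1)^{-\nu}$. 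I expect this to be the main obstacle, as it is the only place where genuine analytic input about a non-explicit orthogonal system is required; the preceding lemma was proved solely to make it available.

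With both decay bounds at hand I would substitute them into the representation, use $|x|\le1$, and factor out $P_n^{(\nu)}(1)$, obtaining
\[ \left|\frac{P_n^{(\nu)*}(x)}{P_n^{(\nu)*}(1)}\right|\le\frac{C_\nu}{(\sqrt{1-x^2}\,n+1)^\nu}\cdot\frac{|\lambda|\,P_n^{(\nu)}(1)+|1-\lambda|\,Q_{n-1}^{(\nu)}(1)}{P_n^{(\nu)*}(1)}. \]
To control the last fraction I would use two facts already proved: by \eqref{equation-chaininequality} and Lemma \ref{Lemma-criticalconstantultraspherical} the ratio $Q_{n-1}^{(\nu)}(1)/P_n^{(\nu)}(1)$ increases to $1/L_1=2\nu/(2\nu-1)$ and hence lies in $[1,2\nu/(2\nu-1)]$, while by Lemma \ref{Lemma-asymptoticone} (applicable because $L_1>0$ and $\lambda<2\nu=1/(1-L_1)$) the quotient $P_n^{(\nu)*}(1)/P_n^{(\nu)}(1)$ is monotone with positive limit $(2\nu-\lambda)/(2\nu-1)$, so it stays bounded below by a positive quantity of size comparable to $2\nu-\lambda$. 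In the acceleration regime $1\le\lambda<2\nu$ these inputs immediately bound the last fraction by a $\nu$-constant times $(1+|\lambda|)/(2\nu-\lambda)$, which I absorb into $C_\nu$. For $\lambda<1$ (in particular $\lambda<0$) the crude splitting over-counts, and the clean remedy is to regroup via the numerator identity $x\,Q_{n-1}^{(\nu)}-P_n^{(\nu)}=\beta_1 P_{n-2}^{(\nu)(2)}$, giving $P_n^{(\nu)*}=P_n^{(\nu)}+(1-\lambda)\beta_1 P_{n-2}^{(\nu)(2)}$; here the coefficient of the dominant block is $1$, the second numerator polynomial obeys the same generalised Jacobi decay bound, and the endpoint-normalised ratio is bounded by $1$, so the factor $(1+|\lambda|)/(2\nu-\lambda)$ again results.

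Finally, the residual and modulus estimates follow by the substitution $x=1-2y$, under which $\sqrt{1-x^2}=\sqrt{(1-x)(1+x)}=2\sqrt{y(1-y)}$, turning the denominator into $(2\sqrt{y(1-y)}\,n+1)^\nu$ and giving the stated bound for $r_n^{(\nu)*}$. For the symmetric modulus I would write $y^{\nu/2}(1-y)^{\nu/2}=(\sqrt{y(1-y)})^\nu$ and apply the elementary inequality $(\sqrt{y(1-y)})^\nu/(2\sqrt{y(1-y)}\,n+1)^\nu\le(2n)^{-\nu}$, valid since $w/(2wn+1)\le1/(2n)$ for $w=\sqrt{y(1-y)}\ge0$; this yields $\eps_\nu^S(n)\le\frac{1+|\lambda|}{2\nu-\lambda}\frac{C_\nu}{(2n)^\nu}$.
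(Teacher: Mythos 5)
Your plan follows the paper's proof in all essentials: the same decomposition $P_n^{(\nu)*}=\lambda P_n^{(\nu)}+(1-\lambda)\,x\,Q_{n-1}^{(\nu)}$ from \eqref{equation-characteristiccodilated}, the same appeal to the generalized Jacobi character of $w_\nu^{(1)}$, the same use of \eqref{equation-chaininequality}, Lemma \ref{Lemma-criticalconstantultraspherical} and Lemma \ref{Lemma-asymptoticone} to control the normalizing factor, and the same endgame via $x=1-2y$. The one place where you diverge is exactly the step you call the heart of the argument, and there your write-up has a genuine gap. The uniform estimates that the generalized Jacobi theory actually supplies (Badkov's lemma, or Nevai's Lemma 9.29 --- the results the paper cites) bound $|Q_{n-1}^{(\nu)}(x)|$ relative to the weighted $L^2$-norm $\|Q_{n-1}^{(\nu)}\|_{w_\nu^{(1)}}$, not relative to $Q_{n-1}^{(\nu)}(1)$. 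Your normalized-at-one bound therefore needs, in addition, the lower bound $Q_{n-1}^{(\nu)}(1)\ge c_\nu\,n^\nu\,\|Q_{n-1}^{(\nu)}\|_{w_\nu^{(1)}}$, i.e.\ a two-sided endpoint estimate; ``normalising at $x=1$ and matching the two regimes'' asserts this but does not prove it. The paper avoids any two-sided estimate: it keeps the $L^2$-normalization throughout, uses Grosjean's identity $\|P_n^{(\nu)}\|_{w_\nu}=\|Q_{n-1}^{(\nu)}\|_{w_\nu^{(1)}}$ to reduce everything to a single norm, and only at the end converts to normalization at $1$ via the explicit formula \eqref{equation-quotientofdifferentnormalizations} and a Gosper-type inequality for the Gamma function, which give $P_n^{(\nu)}(1)/\|P_n^{(\nu)}\|_{w_\nu}\ge c_\nu\,n^{\nu}$. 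Your own ingredients would let you close the gap the same way --- by \eqref{equation-chaininequality} one has $Q_{n-1}^{(\nu)}(1)\ge P_n^{(\nu)}(1)$, and the norms $\|P_n^{(\nu)}\|_{w_\nu}$ and $\|Q_{n-1}^{(\nu)}\|_{w_\nu^{(1)}}$ agree (or, for any normalization of $w_\nu^{(1)}$, differ by an $n$-independent constant, both being products of the same recurrence coefficients) --- but the norm identity never appears in your argument, and without it the key estimate for $Q_{n-1}^{(\nu)}$ is unproven.

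Two smaller points. Your case distinction at $\lambda=1$ is unnecessary: for every $\lambda<2\nu$ the crude splitting gives
\begin{equation*}
\frac{|\lambda|\,P_n^{(\nu)}(1)+|1-\lambda|\,Q_{n-1}^{(\nu)}(1)}{P_n^{(\nu)*}(1)}
\;\le\; \frac{2\nu-1}{2\nu-\lambda}\Bigl(|\lambda|+|1-\lambda|\,\tfrac{2\nu}{2\nu-1}\Bigr)
\;\le\; \frac{4\nu\,(1+|\lambda|)}{2\nu-\lambda},
\end{equation*}
and the factor $4\nu$ is absorbed into $C_\nu$; this is how the paper treats all $\lambda<2\nu$ at once, bounding $|\lambda|C_1+|1-\lambda|C_2\le(1+|\lambda|)(C_1+C_2)$ with no case split. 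Moreover, your proposed remedy for $\lambda<1$ via the identity $x\,Q_{n-1}^{(\nu)}-P_n^{(\nu)}=\beta_1 P_{n-2}^{(\nu)(2)}$ (which is correct) would create a new obligation --- a decay bound for the second associated polynomials $P_{n-2}^{(\nu)(2)}$, whose weight is discussed nowhere --- so it trades a non-problem for extra work. The final estimates for $r_n^{(\nu)*}$ and $\eps_\nu^S(n)$ are carried out correctly and coincide with the paper's.
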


\begin{proof}
Since both, $w_\nu$ and $w_\nu^{(1)}$, are generalized Jacobi weights, we get for $P_n^{(\nu)}$ and $Q_n^{(\nu)}$ the uniform bounds (see \cite[Lemma 1.3]{Badkov1976} or \cite[Lemma 9.29]{Nevai}): 
\begin{align*}
\left(\sqrt{1-x^2} + \frac{1}{n}\right)^{\nu} \frac{|P_n^{(\nu)}(x)|}{\|P_n^{(\nu)}\|_{w_\nu}} \leq \left(\sqrt{1-x} + \frac{1}{n}\right)^{\nu} \left(\sqrt{1+x} + \frac{1}{n}\right)^{\nu} \frac{|P_n^{(\nu)}(x)|}{\|P_n^{(\nu)}\|_{w_\nu}} \leq C_1, \\
\left(\sqrt{1-x^2} + \frac{1}{n}\right)^{\nu} \frac{|Q_{n}^{(\nu)}(x)|}{\|Q_n^{(\nu)}\|_{w_\nu^{(1)}}} \leq \left(\sqrt{1-x} + \frac{1}{n}\right)^{\nu} \left(\sqrt{1+x} + \frac{1}{n}\right)^{\nu} \frac{|Q_{n}^{(\nu)}(x)|}{\|Q_n^{(\nu)}\|_{w_\nu^{(1)}}} \leq C_2,
\end{align*}
with constants $C_1$ and $C_2$ independent of $x \in [-1,1]$ and $n \in \Nn$. Due to the particular normalization \eqref{equation-formulaweightnumerator} of the weight function
$w_\nu^{(1)}$, the weighted $L^2$-norms of the monic polynomials $P_n^{(\nu)}$ and $Q_{n-1}^{(\nu)}$ coincide (see \cite[Section 2]{Grosjean1986}), i.e. 
$\| P_n^{(\nu)}\|_{w_\nu} = \| Q_{n-1}^{(\nu)}\|_{w_\nu^{(1)}}$. This yields the following estimate for the co-dilated polynomials: 
\begin{align*}
\left(\sqrt{1-x^2} + \frac{1}{n}\right)^{\nu} \frac{|P_n^{(\nu)*}(x)|}{\| P_n^{(\nu)}(x)\|_{w_\nu}} &= \left(\sqrt{1-x^2} + \frac{1}{n}\right)^{\nu} \left|  \frac{ \lambda P_n^{(\nu)}(x)}{\| P_n^{(\nu)}(x)\|_{w_\nu}} + \frac{ (1-\lambda) x Q_{n-1}^{(\nu)}(x)}{\| P_n^{(\nu)}(x)\|_{w_\nu}} \right| \\
&\leq |\lambda| C_1 + |1-\lambda| C_2  \leq  (1+|\lambda|) ( C_1 + C_2).
\end{align*}
By Lemma \ref{Lemma-asymptoticone} and Lemma \ref{Lemma-criticalconstantultraspherical}, we have $\frac{P_n^{(\nu)}(1)}{P_n^{(\nu)*}(1)} \leq \frac{L_1}{1+(L_1-1) \lambda} = \frac{2\nu - 1}{2\nu-\lambda}$.
Therefore,
\begin{equation} \label{equation-intermediate1}
 \left(\sqrt{1-x^2} + \frac{1}{n}\right)^{\nu} \frac{|P_n^{(\nu)*}(x)|}{P_n^{(\nu)*}(1)} \leq \frac{\|P_n^{(\nu)}\|_{w_{\nu}} }{P_n^{(\nu)}(1)} 
\frac{2\nu-1}{2\nu - \lambda}  (1+|\lambda|) (C_1 + C_2).
\end{equation}
For the ultraspherical polynomials, the quotient $P_n^{(\nu)}(1)^2 / \|P_n^{(\nu)}\|_{w_{\nu}}^2$ can be computed explicitely as (for the formulas, see \cite[p. 30]{Gautschi})
\begin{equation} \label{equation-quotientofdifferentnormalizations}
\frac{P_n^{(\nu)}(1)^2}{\|P_n^{(\nu)}\|_{w_{\nu}}^2} = \frac{\Gamma(n+2 \nu)}{\Gamma(n+1)} \frac{n+\nu}{2^{2\nu-1}\Gamma(\nu+\frac{1}{2})^2}.
\end{equation}
Using two inequalities related to the formula of Gosper for the Gamma function $\Gamma(x)$ (see \cite[Theorem 1]{Mortici2011}), we get for $\nu > \frac{1}{2}$ the following lower bound:
\begin{align*} \frac{P_n^{(\nu)}(1)^2}{\|P_n^{(\nu)}\|_{w_{\nu}}^2} &\geq \frac{\left(\frac{n+2\nu-1}{e}\right)^{n+2\nu-1} \sqrt{2n+4\nu-2+\frac{1}{3}}}{\left(\frac{n}{e}\right)^n \sqrt{2n+1}}  \frac{n+\nu}{2^{2\nu-1}\Gamma(\nu+\frac{1}{2})^2} \geq \frac{n^{2\nu}}{ (2e)^{2\nu-1} \Gamma(\nu + \frac{1}{2})^2}. 
\end{align*}

Now, including this inequality in the estimate \eqref{equation-intermediate1}, we get a constant $C_\nu$ independent of 
$ \lambda < 2\nu$, $x \in [-1,1]$ and $n \in \Nn$ such that
\[ \left(\sqrt{1-x^2} + \frac{1}{n}\right)^{\nu} \left| \frac{P_n^{(\nu)*}(x)}{P_n^{(\nu)*}(1)}\right| 
\leq  \frac{(1+|\lambda|) }{2\nu-\lambda}   \frac{C_\nu}{ n^{\nu}} . \]
The estimates for the residual polynomial $r_n^{(\nu)*}$ on $[0,1]$ and the modulus $\eps_\nu^S(n)$ follow directly from the respective definitions. 
\end{proof}

\begin{Rem}
Theorem \ref{Theorem-convergenceultraspherical} implies that for the semi-iterative algorithms based on the co-dilated ultraspherical polynomials $P_n^{(\nu)*}$ with parameter $\lambda < 2\nu$ 
the symmetric modulus of convergence is of order $\eps_{s}^S(n) = \Ord(n^{-s})$ for $0 < s \leq \nu$. Therefore, according to \cite[Theorem 4.1]{Hanke1991}), the co-dilated ultraspherical polynomials provide a semi-iterative method with optimal order $\Ord(n^{-s})$ of convergence if the solution $f$ is an element of $X_s$, $0<s \leq \nu$.  
\end{Rem}

Finally, for the co-dilated ultraspherical polynomials, we compute the coefficients $\mu_{n+1}$ in Algorithm \ref{algorithm-2} explicitly. To this end, we need first of all an explicit formula
for the quotient $P_n^{(\nu)}(1)/P_{n+1}^{(\nu)}(1)$. We obtain this quotient by using formula \eqref{equation-quotientofdifferentnormalizations} and the fact that $\|P_{n+1}^{(\nu)}\|_{w_\nu} = \sqrt{\beta_{n+1}}\|P_{n}^{(\nu)}\|_{w_\nu}$ holds for the monic polynomials $P_{n}^{(\nu)}$. Thus, we get
\begin{align*}
\frac{P_n^{(\nu)}(1)}{P_{n+1}^{(\nu)}(1)} &= \frac{P_n^{(\nu)}(1)}{\sqrt{\beta_{n+1}}\|P_{n}^{(\nu)}\|_{w_\nu}} \frac{\|P_{n+1}^{(\nu)}\|_{w_\nu}}{P_{n+1}^{(\nu)}(1)} 
= 2 \frac{n+\nu}{n+2\nu}. 
\end{align*}
Now, using formula \eqref{equation-quotientnumerator}, we get the coefficients $\mu_{n+1}$, $n \geq 1$, explicitly.
\begin{align} \label{equation-coefficientssemiiterativecodilatedultraspherical}
\mu_{n+1} = \frac{P_n^{(\nu)*}(1)}{P_{n+1}^{(\nu)*}(1)} &= \frac{P_n^{(\nu)}(1)}{P_{n+1}^{(\nu)}(1)}\frac{\lambda + (1-\lambda) \frac{Q_{n-1}^{(\nu)*}(1)}{P_{n}^{(\nu)*}(1)}}{\lambda + (1-\lambda) \frac{Q_{n}^{(\nu)*}(1)}{P_{n+1}^{(\nu)*}(1)}} \notag \\& = 2 \frac{n+\nu}{n+2\nu} \frac{\lambda + (1-\lambda) \frac{2 \nu}{ 2 \nu -1} \left(1 - \frac{\Gamma(2 \nu) \Gamma(n+1)}{\Gamma(n+2\nu)} \right)}{\lambda + (1-\lambda) \frac{2 \nu}{ 2 \nu -1} \left(1 - \frac{\Gamma(2 \nu) \Gamma(n+2)}{\Gamma(n+2\nu+1)} \right)}. 
\end{align}
With a simplified expression for $\mu_{n+1}$, the semi-iterative method based on the co-dilated ultraspherical polynomials is summarized in Algorithm \ref{algorithm-8}.

\begin{algorithm} 
\caption{Semi-iterative method based on co-dilated ultraspherical polynomials}
\label{algorithm-8}
\begin{algorithmic}[H]
\vspace{2mm}
\STATE $f_0 = 0$, $f_1 = 2 \; \omega A^* g $ \vspace{2mm}
\WHILE {(stopping criterion false)}  \vspace{2mm}
\STATE $\mu_{n+1} = 2 (n+\nu) \frac{(2\nu-\lambda) \Gamma(n+2\nu) + (\lambda-1) \Gamma(2\nu+1) \Gamma(n+1)}{(2\nu-\lambda) \Gamma(n+2\nu+1) + (\lambda-1) \Gamma(2\nu+1) \Gamma(n+2)} $ \\[2mm]
\STATE $f_{n+1} = f_n + (\mu_{n+1}-1) ( f_n - f_{n-1}) + 2 \mu_{n+1} \; \omega  A^*( g - A f_n)$\\[2mm]
\STATE $n \to n+1$
\ENDWHILE
\end{algorithmic}
\end{algorithm}

\section{Co-dilated $\nu$-methods} \label{section-comodified}

The $\nu$-methods correspond to Algorithm \ref{algorithm-2} with the recurrence coefficients $\alpha_n$, $\beta_n$ of the monic Jacobi polynomials $P_n^{(\nu - \frac{1}{2},-\frac{1}{2})}$ on $[-1,1]$. These particular orthogonal polynomials
are linked to the ultraspherical polynomials $P_{n}^{(\nu)}$ by the formula (see \cite[Theorem 4.1]{Szegö}, using the normalization of the monic polynomials) 
\[ P_n^{(\nu - \frac{1}{2},-\frac{1}{2})}(x) = 2^{n} P_{2n}^{(\nu)}\left(\sqrt{\frac{1+x}{2}}\right), \qquad x \in [-1,1]. \]
In other words, the polynomials $P_n^{(\nu - \frac{1}{2},-\frac{1}{2})}$ describe the positive part of the axisymmetric ultraspherical polynomials $P_{2n}^{(\nu)}$.
Thus, for the asymmetric residual polynomials $\resa_n^{(\nu)}$ of the $\nu$-methods, we have
\[ \resa_n^{(\nu)}(y) := \frac{P_n^{(\nu - \frac{1}{2},-\frac{1}{2})}(1-2y)}{P_n^{(\nu - \frac{1}{2},-\frac{1}{2})}(1)} 
= \frac{P_{2n}^{(\nu)}\left(\sqrt{1-y}\right)}{P_{2n}^{(\nu)}(1)}, \qquad y \in [0,1]. \]

Compared to the semi-iterative methods based on 
the ultraspherical polynomials, the $\nu$-methods have the advantage to converge if $\omega \|A^* A\| = 1$. A similar approach for arbitrary symmetric orthogonal polynomials leads us now to
semi-iterative methods that generalize the $\nu$-methods.

In general, if $P_{2n}(x)$ is an arbitrary even polynomial of degree $2n$ on the interval $[-1,1]$, then $P_{2n}(\sqrt{1-y})$ defines a polynomial of degree $n$ in the variable $y$ on the interval $[0,1]$. In this case, we can define asymmetric residual polynomials $\resa_n$ by
\begin{equation} \label{equation-residualpolynomialsalternative} \resa_n(y) := \frac{P_{2n}(\sqrt{1-y})}{P_{2n}(1)}, \qquad y \in [0,1].\end{equation}
Moreover, if the symmetric polynomials $P_n$ satisfy the three-term recurrence formula \eqref{equation-3termrecurrence}, we can deduce directly a three-term recurrence relation for the 
residual polynomials $\resa_n$. Applying the relation \eqref{equation-3termrecurrence} twice, we get first for the even polynomials $P_{2n}$ the recurrence
\begin{align} \label{equation-3termrecurrenceasymmetric} 
& P_{2n+2}(x) = (x^2 - \beta_{2n} - \beta_{2n+1}) P_{2n}(x) - \beta_{2n}\beta_{2n-1} P_{2n-2}(x), \quad n \geq 1, \notag \\ & P_0(x) = 1, \qquad P_2(x) = x^2-\beta_1. 
\end{align}
Inserting \eqref{equation-3termrecurrenceasymmetric} in the definition \eqref{equation-residualpolynomialsalternative}, yields the following recursion formula
for the residual polynomial $\resa_{n}(y)$ on $[0,1]$:
\begin{align} \label{equation-3termrecurrenceresidualasymmetric}
 & \resa_{n+1}(y) = (1 - y - \beta_{2n}-\beta_{2n+1}) \frac{  P_{2n}(1) }{P_{2n+2}(1)} \resa_n(y) - \beta_{2n} \beta_{2n-1} \frac{  P_{2n-2}(1) }{P_{2n+2}(1)} \resa_{n-1}(y), \notag \\ 
 & \resa_0(y) = 1, \qquad \resa_1(y) = \frac{1-\beta_1 - y}{1-\beta_1}. 
\end{align}
By the formula \eqref{equation-3termrecurrenceasymmetric}, also the factors $\mua_{n+1} = \frac{P_{2n}(1)}{P_{2n+2}(1)}$ can be computed recursively. This results in the following
semi-iterative Algorithm \ref{algorithm-5}.
\begin{algorithm} 
\caption{Semi-iterative method based on the asymmetric residual polynomials $\resa_n$}
\label{algorithm-5}
\begin{algorithmic}[H]
\STATE $\mua_1 = \frac{1}{1 - \beta_1}$
\STATE $f_0 = 0$, $f_1 = \mua_1 \; \omega A^* g $
\WHILE {(stopping criterion false)} 
\STATE $\mua_{n+1} = \frac{1}{1-\beta_{2n}-\beta_{2n+1} - \beta_{2n}\beta_{2n-1} \mua_n }$ \\[2mm]
\STATE $f_{n+1} = f_n + ((1-\beta_{2n}-\beta_{2n+1}) \mua_{n+1}-1) ( f_n - f_{n-1}) + \mua_{n+1} \; \omega  A^*( g - A f_n)$\\[2mm]
\STATE $n \to n+1$
\ENDWHILE
\end{algorithmic}
\end{algorithm}

In the light of \eqref{equation-residualpolynomialsalternative}, we can introduce asymmetric residual polynomials also for the co-dilated orthogonal polynomials $P_n^*$ by setting
\begin{equation} \label{equation-residualpolynomialsalternative2} 
\resa_n^*(y) := \frac{P_{2n}^*(\sqrt{1-y})}{P_{2n}^*(1)}, \qquad y \in [0,1].
\end{equation}
In view of the recurrence relation \eqref{equation-3termrecurrencedilated} of the co-dilated polynomials $P_n^*$, the residual polynomials
$\resa_n^*$ satisfy the same recurrence relation \eqref{equation-3termrecurrenceresidualasymmetric} as the polynomials $\resa_n$ except that the two coefficients including $\beta_m$ are altered. Families of orthogonal polynomials in which more than one coefficient is altered are known as co-modified orthogonal polynomials. As the co-dilated polynomials, they are well
studied in the literature, see \cite{DiniMaroniRonveaux1989, MarcellanDehesaRonveaux1990, RonveauxBelmehdiDiniMaroni1990}. 

From Theorem \ref{Theorem-largerzeros} and the Lemmas \ref{Lemma-zeroscodilated} and \ref{Lemma-asymptoticone}, we can moreover deduce the following results about the zeros of the polynomials
$\resa_n^*$. The statements follow directly from the relation \eqref{equation-residualpolynomialsalternative2} of the polynomials $\resa_n^*$ to the polynomials $P_{2n}^*$. 

\begin{Cor}
The smallest zero of $\resa_n^*$ is a decreasing function of the dilation parameter $\lambda$.
All zeros of $\resa_n^*(y)$, $n \in \Nn$, are in the interior of $[0,1]$ if and only if $\lambda < \frac{1}{1-L_m}$.
\end{Cor}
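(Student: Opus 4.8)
The plan is to transfer both assertions from the symmetric polynomials $P_{2n}^*$ to the asymmetric residual polynomials $\resa_n^*$ by means of the defining substitution \eqref{equation-residualpolynomialsalternative2}. Writing $x=\sqrt{1-y}$, equivalently $y=1-x^2$, the map $\phi(x)=1-x^2$ is a strictly decreasing bijection of $[0,1]$ onto $[0,1]$. Since $P_{2n}^*$ is even of degree $2n$ with $2n$ distinct real zeros symmetric about the origin (for $\lambda$ in the range where $P_n^*$ is orthogonal), its zeros split into the $n$ positive ones $0<x_{2n,n+1}^*<\cdots<x_{2n,2n}^*$ and their negatives, and $0$ cannot be a zero, since an odd number of remaining zeros could not stay symmetric. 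Hence $\resa_n^*(y)=P_{2n}^*(\sqrt{1-y})/P_{2n}^*(1)$ vanishes exactly at the $n$ points $y=1-(x_{2n,j}^*)^2$, $j=n+1,\dots,2n$, i.e.\ precisely at the $\phi$-images of the positive zeros of $P_{2n}^*$.

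First I would settle the monotonicity of the smallest zero. Because $\phi$ is strictly decreasing, the smallest zero of $\resa_n^*$ is the image of the \emph{largest} positive zero of $P_{2n}^*$, namely $1-(x_{2n,2n}^*)^2$. By Theorem \ref{Theorem-largerzeros}, the largest zero $x_{2n,2n}^*$ is a monotone increasing function of the dilation parameter $\lambda$; composing with the decreasing map $\phi$ shows that $1-(x_{2n,2n}^*)^2$ is monotone decreasing in $\lambda$, which is the first claim. This argument is insensitive to whether the zero lies inside or outside $[0,1]$, so it holds for all admissible $\lambda$.

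Next I would handle the interior condition. A zero $y^*$ of $\resa_n^*$ lies in the interior $(0,1)$ of $[0,1]$ if and only if the corresponding positive zero $x^*=\sqrt{1-y^*}$ of $P_{2n}^*$ lies in $(0,1)$. The endpoints cause no trouble: $\resa_n^*(0)=1\neq 0$ and $\resa_n^*(1)=P_{2n}^*(0)/P_{2n}^*(1)\neq 0$, so neither $y=0$ nor $y=1$ is ever a zero, and a zero can leave the interior only when a positive zero of $P_{2n}^*$ reaches or exceeds $1$ (note $\phi$ sends $x=1$ to the excluded endpoint $y=0$). Demanding this for every $n\in\Nn$ amounts, through the even-index family, to requiring that all zeros of $P_{2n}^*$ remain in $(-1,1)$, and Lemma \ref{Lemma-zeroscodilated} characterizes exactly this in terms of $\lambda$ and $L_m$.

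The step needing the most care is the boundary bookkeeping near the critical value $\lambda=\tfrac{1}{1-L_m}$ together with the restriction to even indices, since Lemma \ref{Lemma-zeroscodilated} controls the full family $P_n^*$ whereas $\resa_n^*$ only sees $P_{2n}^*$. One must check that the even-index subsequence already detects the loss of the interior property: by the limit formula of Lemma \ref{Lemma-asymptoticone} the quantity $P_n^*(1)/P_n(1)$ changes sign for $\lambda>\tfrac{1}{1-L_m}$ along all large indices, in particular even ones, so some $P_{2n}^*$ acquires a zero beyond $1$ and the corresponding $\resa_n^*$ has a zero outside $(0,1)$; conversely the positivity $P_n^*(1)>0$ for $\lambda\le\tfrac{1}{1-L_m}$ keeps the largest zero of each $P_{2n}^*$ below $1$. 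Tracking this threshold carefully, and in particular deciding the behaviour exactly at $\lambda=\tfrac{1}{1-L_m}$ where $P_n^*(1)/P_n(1)\to 0$, is the delicate point that yields the stated equivalence.
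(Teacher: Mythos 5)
Your proof of the first assertion is correct and coincides with the paper's: the paper offers only the remark that the statements follow directly from the relation \eqref{equation-residualpolynomialsalternative2} together with Theorem \ref{Theorem-largerzeros}, and your computation --- the smallest zero of $\resa_n^*$ equals $1-(x_{2n,2n}^*)^2$, hence is decreasing in $\lambda$ because the largest zero $x_{2n,2n}^*$ is increasing --- is exactly that deduction written out. Your treatment of the second assertion also follows the paper's intended route (reduce to the location of the zeros of the even-indexed $P_{2n}^*$ and invoke Lemma \ref{Lemma-zeroscodilated}), and your observation that Lemma \ref{Lemma-asymptoticone} lets the even-indexed subfamily detect the failure for $\lambda>\frac{1}{1-L_m}$ --- since $P_n^*(1)/P_n(1)$ becomes negative for \emph{all} large $n$, in particular along even indices --- is a genuinely useful detail, and presumably the reason the paper cites Lemma \ref{Lemma-asymptoticone} in its one-sentence proof.

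However, the point you explicitly leave open, the behaviour at $\lambda=\frac{1}{1-L_m}$, is not something you could have closed in favour of the stated strict inequality, because at that value the corollary is inconsistent with Lemma \ref{Lemma-zeroscodilated}. Your own intermediate remark already settles the matter: since the chain of inequalities \eqref{equation-chaininequality} is strict for every finite $n$, one still has $P_n^*(1)>0$ for all $n$ when $\lambda=\frac{1}{1-L_m}$, so by the lemma (whose condition is the non-strict $\lambda\leq\frac{1}{1-L_m}$) all zeros of every $P_{2n}^*$ lie in $(-1,1)$, and under your correspondence all zeros of every $\resa_n^*$ then lie in $(0,1)$. Concretely, for the co-dilated Chebyshev polynomials ($m=1$, $\nu=1$) one has $\frac{1}{1-L_1}=2$ and $U_n^*(\cdot,2)=T_n$, whose zeros are strictly interior, so the interior property survives at the critical value. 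Hence the equivalence that actually follows from the cited results is that all zeros of all $\resa_n^*$ are interior if and only if $\lambda\leq\frac{1}{1-L_m}$, with the same non-strict inequality as in Lemma \ref{Lemma-zeroscodilated}; the strict $<$ in the corollary cannot be derived from the paper's lemmas and is best read as a slip (strictness, i.e. $\lambda<2\nu$ in the ultraspherical case, is what the convergence estimates of Theorem \ref{Theorem-convergenceultraspherical} and Corollary \ref{Corollary-convergencenonsymmetric} require, not the zero location). So your instinct to flag the boundary case rather than wave it through was correct; the resolution is that the statement, not your reduction, needs the adjustment.
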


Finally, for $m = 1$, we consider in more detail the asymmetric residual polynomials $\resa_n^{(\nu)*}(y) = P_{2n}^{(\nu)*}(\sqrt{1-y})/P_{2n}^{(\nu)*}(1)$ linked 
to the co-dilated ultraspherical polynomials. As a consequence of Theorem \ref{Theorem-convergenceultraspherical}, we get the following estimates for $\resa_n^{(\nu)*}(y)$. 

\begin{Cor} \label{Corollary-convergencenonsymmetric} For $m = 1$, $\nu > \frac{1}{2}$ and $\lambda < 2\nu$, the residual polynomials 
$\resa_n^{(\nu)*}(y)$ on $[0,1]$ and the modulus of convergence $\eps_\nu(n)$ are bounded by
\[  \left| \resa_n^{(\nu)*}(y) \right| \leq    \frac{1+|\lambda|}{(2\nu-\lambda)} \frac{C_\nu}{ (2 \sqrt{y}\,n+1)^\nu}, \qquad \eps_\nu(n) \leq \frac{1+|\lambda|}{(2\nu-\lambda)} \frac{C_\nu}{ (2 n)^\nu}.\]
The constant $C_\nu$ is independent of $n$, $y$ and $\lambda$. 
\end{Cor}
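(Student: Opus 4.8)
The plan is to obtain both estimates directly from Theorem \ref{Theorem-convergenceultraspherical} by means of the defining relation \eqref{equation-residualpolynomialsalternative2}, which expresses $\resa_n^{(\nu)*}$ as the even co-dilated polynomial $P_{2n}^{(\nu)*}$ composed with the substitution $x = \sqrt{1-y}$. Since the corollary concerns the case $m=1$, for which Theorem \ref{Theorem-convergenceultraspherical} is formulated, no new analytic input is required: the whole argument is a change of variables followed by an elementary manipulation.

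First I would apply the bound of Theorem \ref{Theorem-convergenceultraspherical} with the polynomial index $2n$ in place of $n$ and with $x = \sqrt{1-y} \in [0,1]$, which is legitimate for $y \in [0,1]$. Under this substitution one has $1-x^2 = y$, hence $\sqrt{1-x^2} = \sqrt{y}$. Combining this with \eqref{equation-residualpolynomialsalternative2} gives
\[ \left| \resa_n^{(\nu)*}(y)\right| = \left| \frac{P_{2n}^{(\nu)*}(\sqrt{1-y})}{P_{2n}^{(\nu)*}(1)}\right| \leq \frac{1+|\lambda|}{2\nu-\lambda} \frac{C_\nu}{(\sqrt{y}\,(2n)+1)^\nu} = \frac{1+|\lambda|}{2\nu-\lambda} \frac{C_\nu}{(2\sqrt{y}\,n+1)^\nu}, \]
which is exactly the first asserted estimate. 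The constant $C_\nu$ is inherited from Theorem \ref{Theorem-convergenceultraspherical} and is therefore independent of $n$, $y$ and $\lambda$.

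For the modulus $\eps_\nu(n) = \sup_{y \in [0,1]} |y^{\nu/2}\resa_n^{(\nu)*}(y)|$, I would multiply the pointwise bound just obtained by $y^{\nu/2}$ and then discard the $+1$ in the denominator, using $2\sqrt{y}\,n+1 \geq 2\sqrt{y}\,n$. Since $(2\sqrt{y}\,n)^\nu = (2n)^\nu y^{\nu/2}$, the factor $y^{\nu/2}$ cancels exactly and one is left with the $y$-independent bound $\frac{1+|\lambda|}{2\nu-\lambda}\frac{C_\nu}{(2n)^\nu}$, while at $y=0$ the left-hand side vanishes. Taking the supremum over $y \in [0,1]$ then yields the second estimate.

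There is no genuine obstacle here, as the substance has already been established in Theorem \ref{Theorem-convergenceultraspherical}; the corollary is a transcription through the quadratic substitution together with the cancellation of the weight $y^{\nu/2}$. The only points requiring a little care are to verify that $x=\sqrt{1-y}$ maps $[0,1]$ into the interval $[-1,1]$ on which the theorem's bound is valid, and to track the replacement of the degree $n$ by $2n$ so that the factor $2$ appears correctly inside the powers $(2\sqrt{y}\,n+1)^\nu$ and $(2n)^\nu$.
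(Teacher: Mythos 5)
Your proof is correct and follows exactly the route the paper intends: Corollary \ref{Corollary-convergencenonsymmetric} is stated as a direct consequence of Theorem \ref{Theorem-convergenceultraspherical}, obtained by applying its polynomial bound with degree $2n$ and $x=\sqrt{1-y}$ (so that $\sqrt{1-x^2}=\sqrt{y}$), and then cancelling the weight $y^{\nu/2}$ against $(2\sqrt{y}\,n)^\nu$ for the modulus estimate. Your write-up simply makes explicit the details the paper leaves to the reader, including the degree replacement $n\to 2n$ and the supremum argument.
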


\begin{Rem}
For $\lambda = 1$, the result of Corollary \ref{Corollary-convergencenonsymmetric} corresponds precisely to the well-known convergence result of the $\nu$-methods, see \cite[Theorem 6.12]{EnglHankeNeubauer}. 
In Corollary \ref{Corollary-convergencenonsymmetric}, the residual polynomials $\resa_n^{(\nu)*}$ converge pointwise to zero at $y = 1$. Thus, compared to 
the symmetric polynomials $r_n^{(\nu)*}$ of Theorem \ref{Theorem-convergenceultraspherical}, the residual polynomials $\resa_n^{(\nu)*}$ define semi-iterative methods that converge also to zero
if $\omega \|A^* A\| = 1$.
\end{Rem}
\begin{Rem}
The convergence orders $\eps_\nu(n) = \Ord(n^{-\nu})$ obtained in Corollary \ref{Corollary-convergencenonsymmetric} are substantial for the usage of the co-dilated $\nu$-methods as regularization methods. In particular, \cite[Theorem 6.11]{EnglHankeNeubauer} implies that the co-dilated $\nu$-method with $\lambda < 2 \nu$ based on the residual polynomials $\resa_n^{(\nu)*}$ is a regularization method of optimal order for $f \in X_s$ with $0 < s \leq \nu-1$ if 
the iteration $f_n$ is stopped according to the discrepancy principle, i.e. if $\|A f_n - g\| < \tau \eps$. Here, $\eps$ denotes the noise level of the data and the parameter $\tau$ is chosen larger
than the uniform bound $\sup_{y \in [0,1]}|\resa_n^{(\nu)*}(y)|$ given in Lemma \ref{Lemma-codilatedbounded}.
Using a generalized discrepancy principle as stopping rule, as described in \cite[Algorithm 6.17]{EnglHankeNeubauer} and \cite{HankeEngl1994}, the co-dilated $\nu$-methods 
even provide an order optimal regularization method for $0 < s \leq \nu$ (see \cite[Theorem 6.18]{EnglHankeNeubauer}). 
\end{Rem}

In Algorithm \ref{algorithm-5}, the coefficients $\mua_{n+1}$ for the co-dilated ultraspherical polynomials are given explicitly as
$\mua_{n+1} = \mu_{2n+1} \mu_{2n+2}$, the factors $\mu_{n+1}$ given in \eqref{equation-coefficientssemiiterativecodilatedultraspherical}. 
With the recursion coefficients $\beta_n$ of the ultraspherical polynomials given in \eqref{equation-recursioncoefficientsultraspherical}, we summarize the co-dilated $\nu$-methods in
Algorithm \ref{algorithm-4}.

\begin{algorithm} 
\caption{Co-dilated $\nu$-methods}
\label{algorithm-4}

\begin{algorithmic}[H]
\vspace{2mm}
\STATE $f_0 = 0$, $f_1 = \frac{2\nu+2}{2\nu+2-\lambda} \; \omega A^* g $ \vspace{2mm}
\WHILE {(stopping criterion false)}  \vspace{2mm}
\STATE $\mua_{n+1} = 4 (2n+\nu) (2n+\nu+1) \frac{(2\nu-\lambda) \Gamma(2n+2\nu) + (\lambda-1) \Gamma(2\nu+1) \Gamma(2n+1)}{(2\nu-\lambda) \Gamma(2n+2\nu+2) + (\lambda-1) \Gamma(2\nu+1) \Gamma(2n+3)}$ \\[2mm]
\STATE $f_{n+1} = f_n + \left(\!\left( 1 - \frac{4 n^2+4 \nu n + \nu -1}{2(2n+\nu+1)(2n+\nu-1)} \!\right) \! \mua_{n+1}-1 \! \right)\! ( f_n - f_{n-1}) + \mua_{n+1} \; \omega  A^*( g - A f_n)$\\[2mm]
\STATE $n \to n+1$
\ENDWHILE
\end{algorithmic}
\end{algorithm}

\section{Adaptive choice of the dilation parameter $\lambda$ for the co-dilated $1$-method}

In the algorithms of the last sections it is a priori not clear how the dilation parameter $\lambda$ has to be chosen. In the following, we provide for the co-dilated $1$-method a simple 
adaptive scheme that computes for every step $n$ of the iteration an optimal $\lambda$ such that the error $\|A f_n - g\|$ is minimized.
For $\nu = 1$, the ultraspherical polynomials $P_n^{(1)}$ coincide with the Chebyshev polynomials $U_n$ of the second kind. This yields in Algorithm \ref{algorithm-4} the coefficients
\[ \mua_{n+1} = \frac{U_{2n}^*(1)}{U_{2n+2}^*(1)} = 4 \frac{(2-\lambda)2n + \lambda}{(2-\lambda)2n + 4 - \lambda},\]
resulting in the iterative scheme
\begin{align} \label{equation-generalizednemirovskiipolyak}
f_{n+1} &= f_n + \frac{(2-\lambda)2n + 3 \lambda-4}{(2-\lambda)2n+4-\lambda}\;(f_n - f_{n-1}) + 4 \,\frac{(2-\lambda)2n+\lambda}{(2-\lambda) 2n + 4-\lambda}\, \omega \;A^*( g - A f_n), \notag \\
      f_1 &= \frac{4}{4-\lambda} \omega \; A^* g, \qquad f_0 = 0.
\end{align}
For $\lambda = 1$, this iteration is precisely the Chebyshev method of Nemirovskii and Polyak (see \cite[p. 150]{Rieder}). Due to the particular three-term recurrence formula \eqref{equation-3termrecurrencechebyshev} of the Chebyshev
polynomials $U_n$, it is possible to calculate the iterates $f_n$ for all different $\lambda$ at one stroke. Namely, by the last identity in equation \eqref{equation-definitioncodilatedchebyshev} the 
residual polynomials $\resa_n^{*}(y) := \resa_n^{(1)*}(y)$ can be written as
\begin{align} \label{equation-linearcombinationresidual}
\resa_n^{*}(y) &= \frac{U_{2n}^*(\sqrt{1-y})}{U_{2n}^*(1)} = \frac{ U_{2n}(\sqrt{1-y}) + \frac{1-\lambda}{4} U_{2n-2}(\sqrt{1-y})}{U_{2n}^*(1)} \notag \\
& = \frac{  U_{2n}(1)}{U_{2n}^*(1)} \frac{U_{2n}(\sqrt{1-y})}{U_{2n}(1)} + \frac{1-\lambda}{4} \frac{  U_{2n-2}(1)}{U_{2n}^*(1)} \frac{U_{2n-2}(\sqrt{1-y})}{U_{2n-2}(1)} \notag \\
&= \frac{ 2n+1}{(2-\lambda)2n+\lambda} \resa_n(y) +  \frac{(1-\lambda)(2n-1)}{(2-\lambda)2n+\lambda} \resa_{n-1}(y). 
\end{align}
In this way, every residual polynomial $\resa_n^{*}(y)$ can be computed as an affine combination of the residual polynomials $\resa_n$ and $\resa_{n-1}$. This enables us to introduce a low-cost
adaptive algorithm in which in every step $n$ the parameter $\lambda$ is chosen optimally. If $f_n^\lambda$ denotes the iterate in \eqref{equation-generalizednemirovskiipolyak} with respect to a fixed parameter $\lambda \in \Rr$, we have 
\[ \min_{\lambda \in \Rr} \|A f_n^\lambda - g\| = \min_{\gamma \in \Rr} \|A (f_n^1 - \gamma (f_n^1-f_{n-1}^1)) - g\|.\]
The minimum on the right hand side is obtained if the vector $ f_n^1 - \gamma (f_n^1- f_{n-1}^1)$ is orthogonal to $f_n^1- f_{n-1}^1$, i.e. if
\[ \arg \min_{\gamma \in \Rr} \|A (f_n^1 - \gamma (f_n^1-f_{n-1}^1)) - g\|= \frac{\langle f_n^1 , f_n^1-f_{n-1}^1 \rangle}{\|f_{n}^1-f_{n-1}^1\|^2}. \]
Thus, in view of \eqref{equation-linearcombinationresidual}, the optimal $\lambda$ after $n$ steps of the iteration \eqref{equation-generalizednemirovskiipolyak} is given by
\[ \lambda = 1 - \frac{(2n+1)\langle f_n^1 , f_n^1-f_{n-1}^1 \rangle}{(2n-1)(\|f_n^1 - f_{n-1}^1\|^2 - \langle f_n^1 , f_n^1-f_{n-1}^1 \rangle)}.\]
This simple idea is summarized in the adaptive co-dilated $1$-method formulated in Algorithm \ref{algorithm-7}. 
Here, the iteration is stopped according to the discrepancy principle if the minimal error $\min_{\lambda \in \Rr} \|A f_n^\lambda - g\|$ gets smaller than $\tau \eps$, where $\tau > 1$ and
$\eps$ describes the noise level of the data. 

\begin{algorithm} 
\caption{Adaptive co-dilated $1$-method}
\label{algorithm-7}

\begin{algorithmic}[H]
\vspace{2mm}
\STATE $f_0 = 0$, $f_1 = \frac{4}{3} \omega A^* g $
\STATE $v_0 = g$, $v_1 = g - A f_1$ 
\STATE $\gamma = \frac{\langle v_{1} , v_{1}-v_{0} \rangle}{\|v_{1}-v_{0}\|^2}$, $v_{\min} = v_1 - \gamma( v_1 - v_0 )$
\vspace{2mm}
\WHILE {$\|v_{\min}\| > \tau \eps $}  \vspace{2mm}
\STATE $f_{n+1} = f_n + \frac{2n-1}{2n+3} ( f_n - f_{n-1}) + 4 \omega \frac{2n+1}{2n+3} A^*v_n$\\[2mm]
\STATE $v_{n+1} = g - A f_{n+1}$\\[2mm]
\STATE $\gamma = \frac{\langle v_{n+1} , v_{n+1}-v_{n} \rangle}{\|v_{n+1}-v_{n}\|^2}$ \\[2mm]
\STATE $v_{\min} = v_{n+1} - \gamma (v_{n+1} - v_n)$\\[2mm]
\STATE $n \to n+1$
\ENDWHILE \\[2mm]
\STATE $\lambda = 1 - \frac{(2n+1) \gamma }{(2n-1)(1-\gamma)}$ \\[2mm]
\STATE $f_n = f_n - \gamma (f_n - f_{n-1})$
\end{algorithmic}
\end{algorithm}

\begin{Rem}
Similar adaptive schemes are in principle possible also for the other co-dilated $\nu$-methods. Taking two arbitrary real values $\lambda_1 \neq \lambda_2$, every iterate $f_n^\lambda$ can
be written as an affine combination of $f_n^{\lambda_1}$ and $f_n^{\lambda_2}$. Thus, in order to obtain all iterates $f_n^\lambda$, it suffices to compute two iterates 
$f_n^{\lambda_1}$ and $f_n^{\lambda_2}$. However, for general $\nu$ there exists no direct relation between $f_n^{\lambda_1}$ and $f_n^{\lambda_2}$ such as 
in the case of the Chebyshev polynomials. Therefore, for $\nu \neq 1$ the iterations in adaptive schemes like Algorithm \ref{algorithm-7} are twice as expensive
as in Algorithm \ref{algorithm-4}.
\end{Rem}

\section{Numerical tests}

In this final section, we compare the convergence behavior of the co-dilated $\nu$-methods with the original $\nu$-methods and the Landweber method. As a first and very simple test equation 
we consider 
\begin{equation} \label{equation-testequation1} 
A f = g_1^{\eps}, \qquad A = \diag \left(1,\frac{1}{2}, \cdots \frac{1}{N}\right), \quad g_1^{\eps} = e_N + \eps w,
\end{equation}
with $e_N = (0, \cdots, 0,1)$, $\eps > 0$ and $w$ a vector of normally distributed Gaussian white noise.  
To solve \eqref{equation-testequation1}, we use Algorithm \ref{algorithm-4}. We choose $\omega = 1$, $\eps = 0.01$, $N = 100$ and stop the iteration according to the discrepancy principle, if 
$\|A f_n - g_1^{\eps}\| < 4 \eps$. For $\nu = 1$ the number of necessary iterations depending on the dilation parameter $1 \leq \lambda \leq 2.2$ is given in Figure \ref{figure-1}.
For $\nu = 2$, the number of necessary iterations depending on the parameter $3.9 \leq \lambda \leq 4.05$ is depicted in Figure \ref{figure-2}. In both figures, the smallest zeros 
of the respective residual polynomials are plotted on the right hand side. 
 
\begin{figure}[H] \caption{Convergence of the co-dilated $1$-method (Algorithm \ref{algorithm-4} with $\nu = 1$) to solve \eqref{equation-testequation1} depending on the dilation 
parameter $\lambda$.} \label{figure-1} 
  \begin{minipage}{0.5\textwidth}
  \centering
  \includegraphics[angle=-90, width=\textwidth]{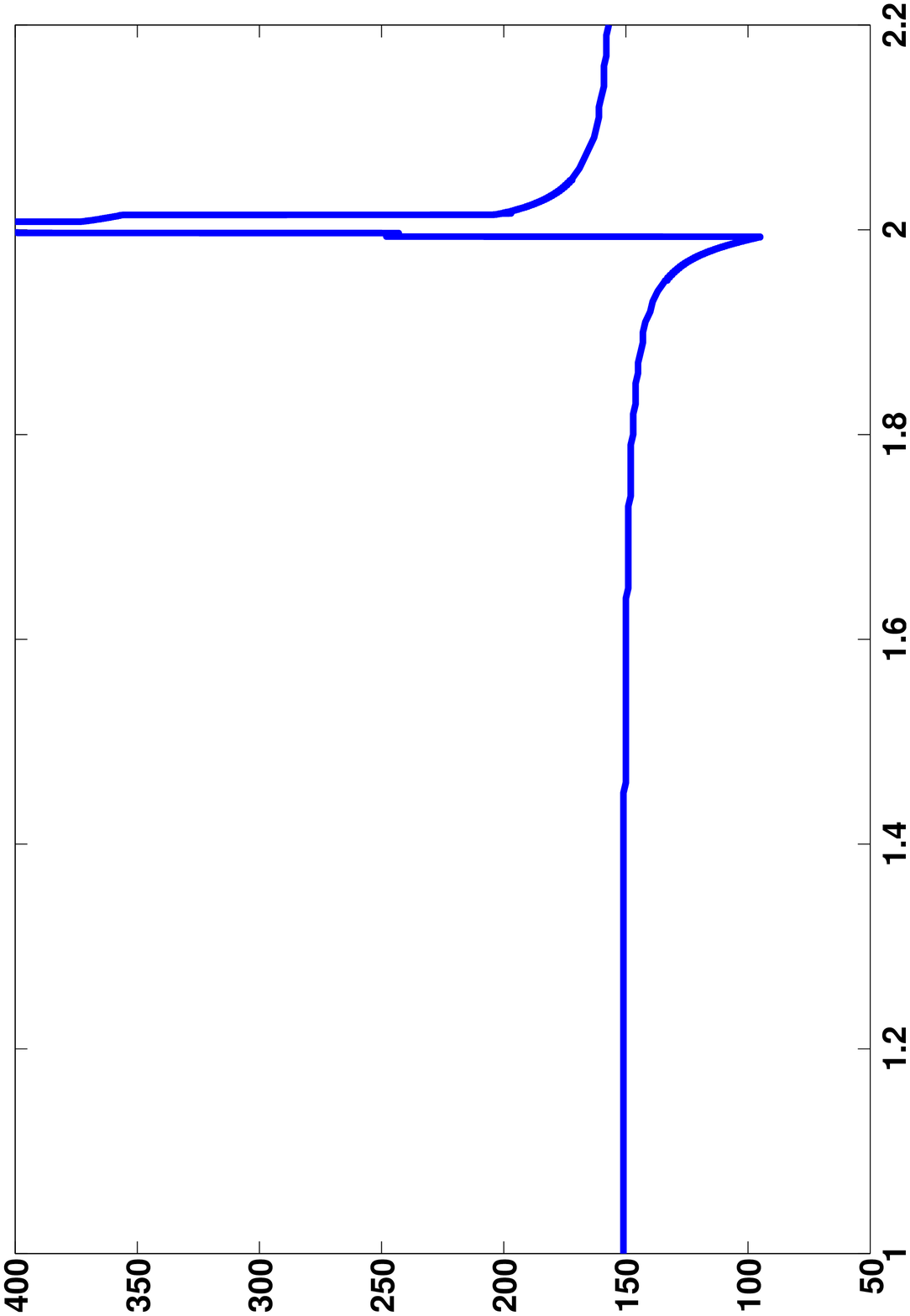}\\
  Number of iterations to solve \eqref{equation-testequation1} with Algorithm \ref{algorithm-4} with $\nu = 1$ depending on $1 \leq \lambda \leq 2.2$. 
  \end{minipage}\hfill
  \begin{minipage}{0.5\textwidth}
  \centering
  \includegraphics[angle=-90, width=\textwidth]{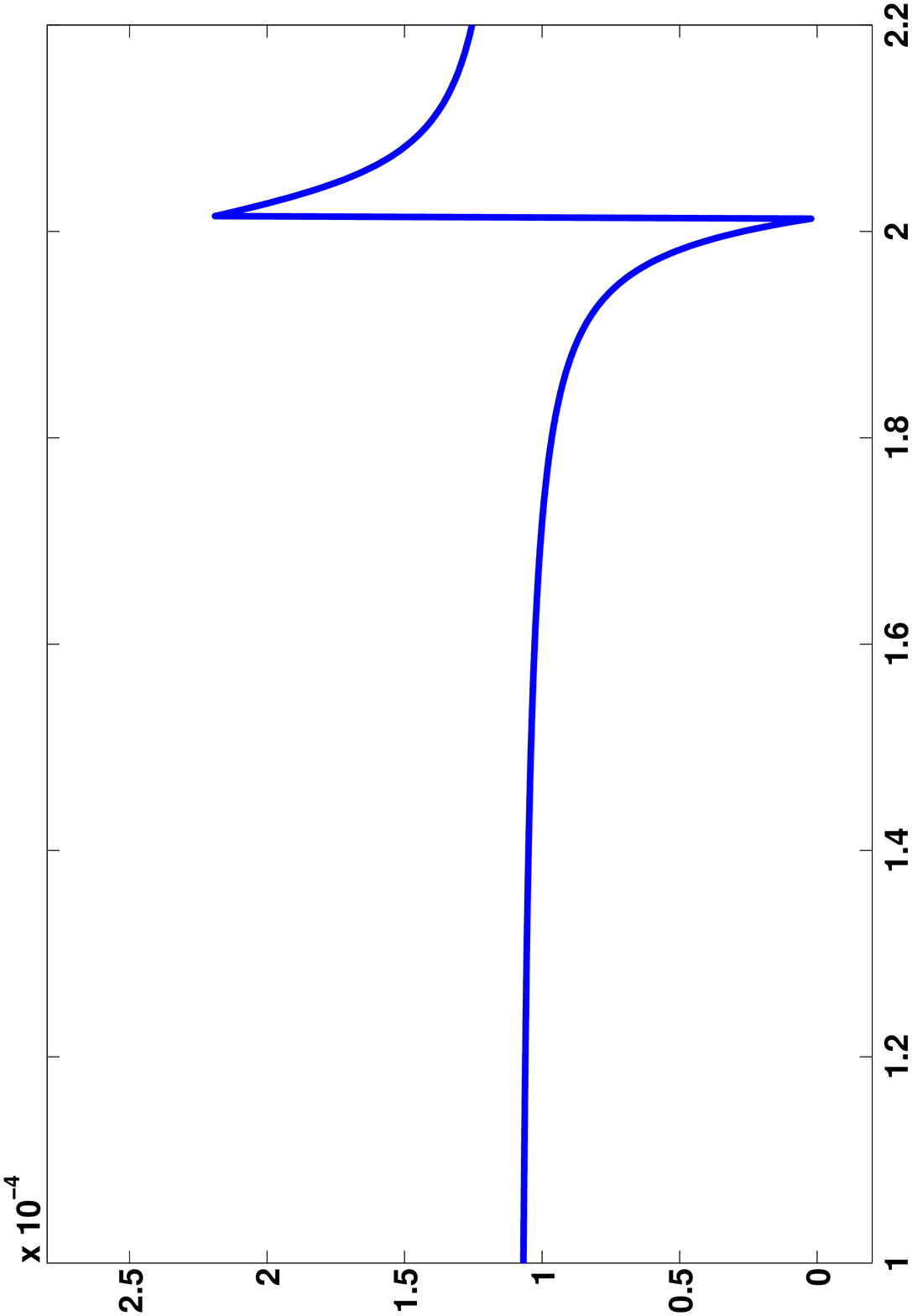}\\
  Smallest zero of the residual polynomial $\resa_{150}^{(1)*}(y)$ in the interval $[0,1]$ depending on $1 \leq \lambda \leq 2.2$.
  \end{minipage}
\end{figure}

\begin{figure}[H] \caption{Convergence of the co-dilated $2$-method (Algorithm \ref{algorithm-4} with $\nu = 2$) to solve \eqref{equation-testequation1} depending on the dilation parameter $\lambda$, .} \label{figure-2}
  \begin{minipage}{0.5\textwidth}
  \centering
  \includegraphics[angle=-90, width=\textwidth]{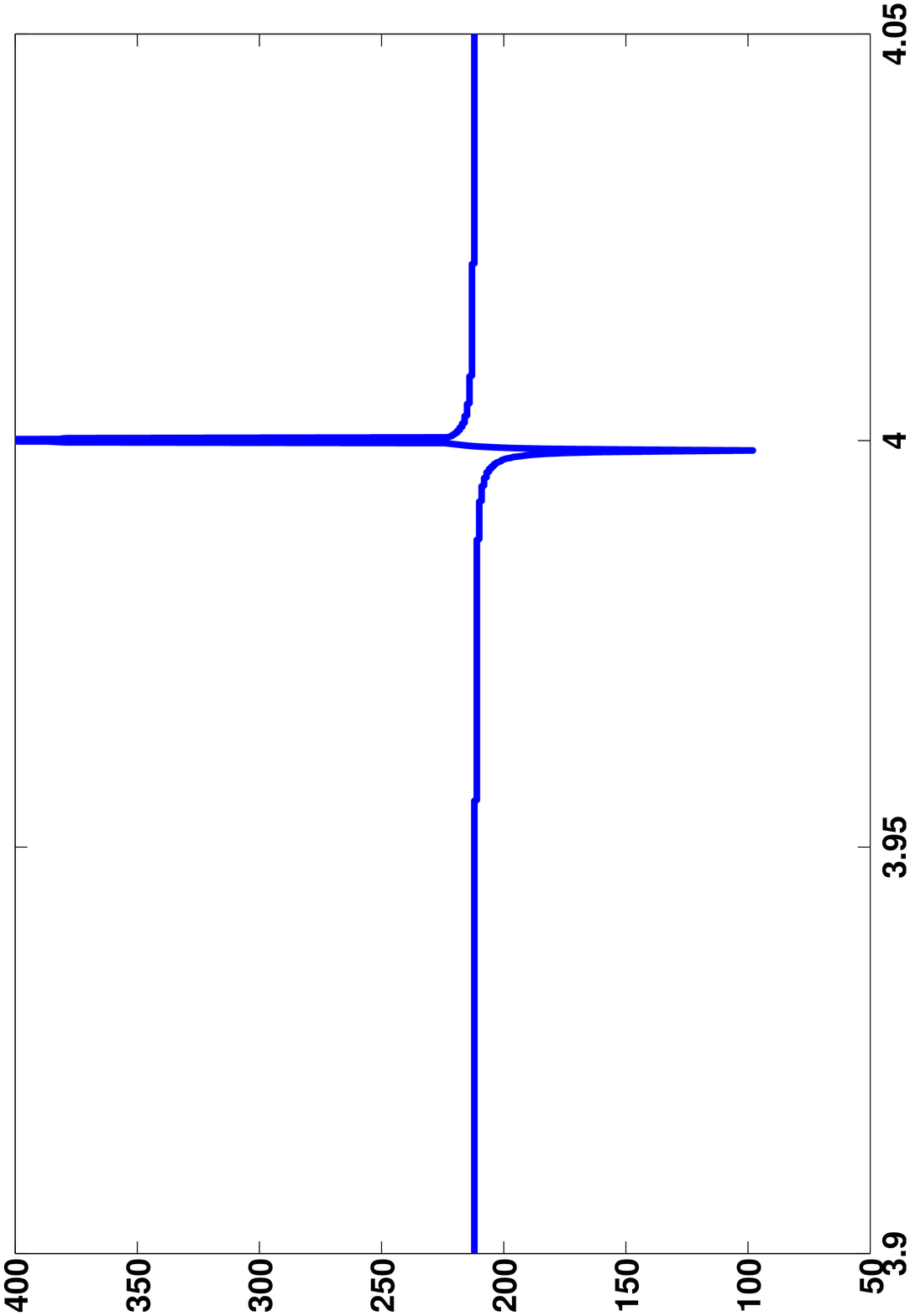}\\
  Number of iterations to solve \eqref{equation-testequation1} with Algorithm \ref{algorithm-4} with $\nu = 2$ depending on $3.9 \leq \lambda \leq 4.05$. 
  \end{minipage}\hfill
  \begin{minipage}{0.5\textwidth}
  \centering
  \includegraphics[angle=-90, width=\textwidth]{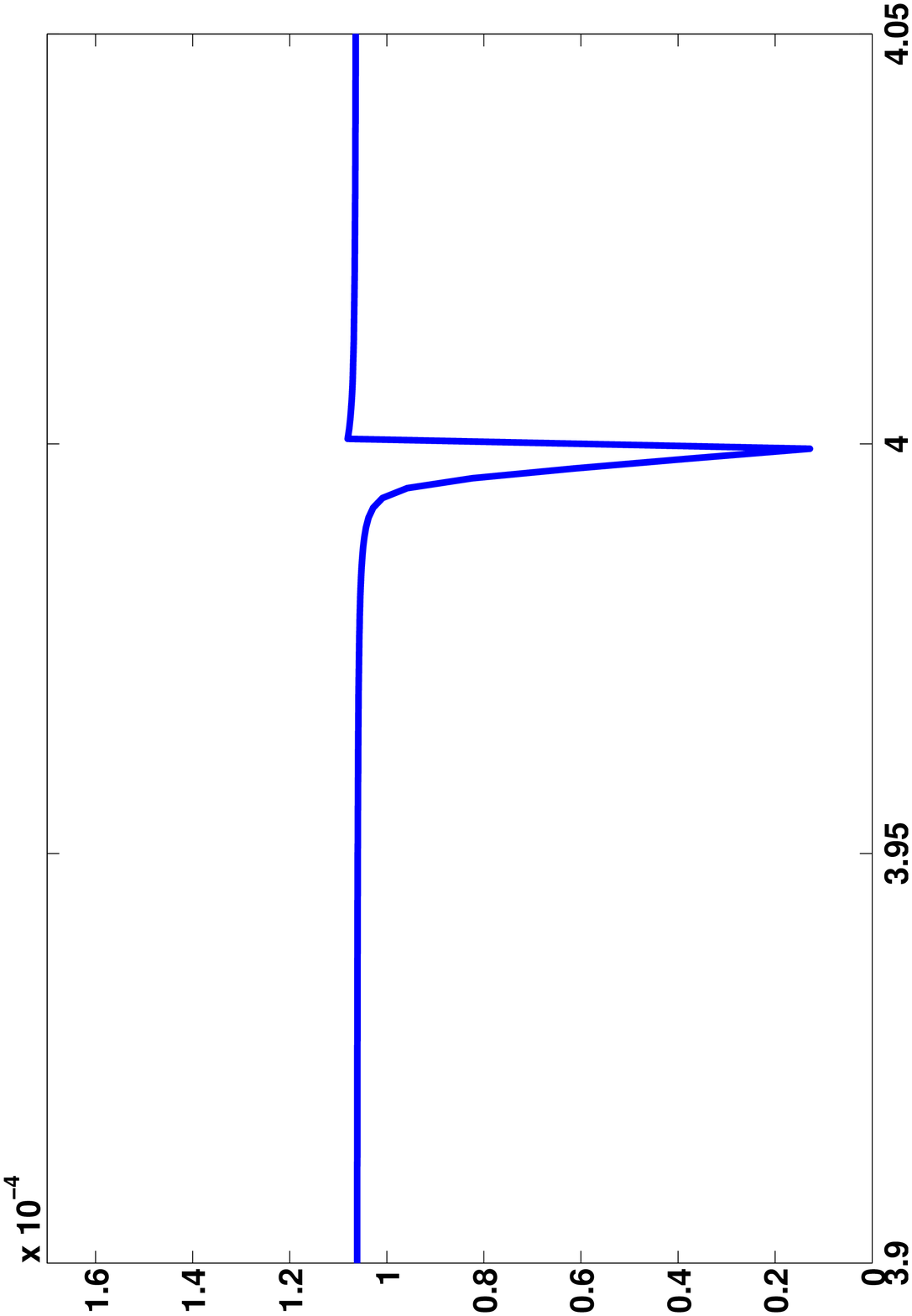}\\
  Smallest zero of the residual polynomial $\resa_{215}^{(2)*}(y)$ in the interval $[0,1]$ depending on $3.9 \leq \lambda \leq 4.05$.
  \end{minipage}
\end{figure}

The graphs in Figure \ref{figure-1} and \ref{figure-2} indicate that for the linear system \eqref{equation-testequation1} it is favorable to choose 
the dilation value $\lambda$ in Algorithm \ref{algorithm-4} close to but smaller than the critical value $2\nu$. This fact can
be explained by the particular structure of the matrix $A$ and the right hand side $g_1^{\eps}$. The vector $g_1^{\eps}$ is up to a small perturbation exactly the eigenvector of 
$A^*A$ with respect to the smallest eigenvalue $0.0001$. For a fast termination
of Algorithm \ref{algorithm-4} it is thus favorable if the residual polynomials $\resa_n^{(\nu)*}$ decay fast at zero. This is guaranteed if the dilation parameter $\lambda$ is close to the 
critical value $2\nu$. For $\nu = 1$, the adaptive Algorithm \ref{algorithm-7} stops after $n = 95$ iterations and gives the optimal parameter $\lambda = 1.9930696$. 

Figures \ref{figure-1} and \ref{figure-2} also indicate that the number of iterations to solve \eqref{equation-testequation1}
is strongly linked to the smallest zero of the residual polynomials $\resa_n^{(\nu)*}$. The smallest zero of the residual polynomial $\resa_n^{(\nu)*}$ in $[0,1]$ is a decaying
function of the parameter $\lambda$ until the critical value $\lambda = 2 \nu$ is attained. At the critical value $\lambda = 2 \nu$ we cannot expect convergence of Algorithm \ref{algorithm-4} and this is
also verified in Figures \ref{figure-1} and \ref{figure-2}. For values of $\lambda$ larger than $2 \nu$ the smallest root of $\resa_n^{(\nu)*}$ is for large $n$ strictly less than zero. In this
case the second smallest root of $\resa_n^{(\nu)*}$ is the smallest root in the interval $[0,1]$. The convergence of Algorithm \ref{algorithm-4} is now linked to the position of the second smallest root of $\resa_n^{(\nu)*}$. 

After having considered a good-natured example, we give a second example in which Algorithm \ref{algorithm-4} does not improve if the parameter $\lambda$ is increased. We consider as a second test
equation
\begin{equation} \label{equation-testequation2} 
A f = g_2^{\eps}, \qquad g_2^{\eps} = e_2 + \eps w,
\end{equation}
with $A$ and $w$ given in \eqref{equation-testequation1} and $e_2 = (0,1,0, \cdots, 0)$. Again, we use Algorithm \ref{algorithm-4} with $\omega = 1$, $\eps = 0.01$, $N = 100$ to solve
\eqref{equation-testequation2} and stop the algorithm if the error $\|A f_n - g_2^{\eps}\|$ is less than $4\eps$.  
In this second case, the vector $g_2^{\eps}$ is up to a small perturbation the eigenvector of $A^*A$ with respect to the second largest eigenvalue $0.25$. 
Here, we can not expect that residual polynomials with a fast decay at the origin will have a strong effect on the number of iterations in Algorithm \ref{algorithm-4}.
The diagrams in Figure \ref{figure-3} confirm this expectation. The adaptive Algorithm \ref{algorithm-7} for $\nu = 1$ stops in this case after $n = 65$ iteration with the optimal parameter $\lambda = 1.6003658$.

\begin{figure}[H] 
\caption{Convergence of Algorithm \ref{algorithm-4} to solve \eqref{equation-testequation2} depending on the parameter $\lambda$.}
\label{figure-3} 
  \begin{minipage}{0.5\textwidth}
  \centering
  \includegraphics[angle=-90, width=\textwidth]{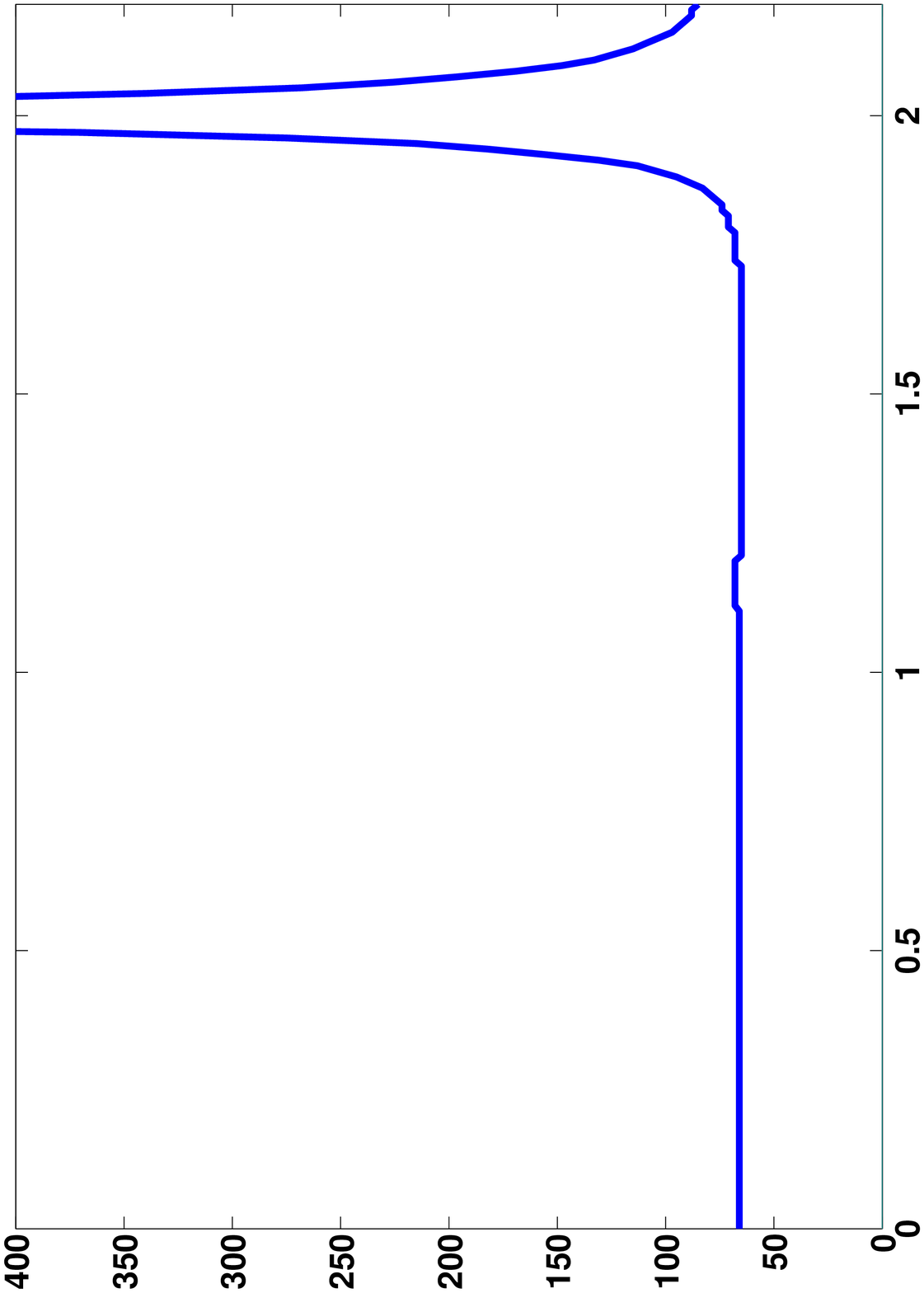}\\
  Number of iterations to solve \eqref{equation-testequation2} with Algorithm \ref{algorithm-4} with $\nu = 1$ depending on $0 \leq \lambda \leq 2.2$. 
  \end{minipage}\hfill
  \begin{minipage}{0.5\textwidth}
  \centering
  \includegraphics[angle=-90, width=\textwidth]{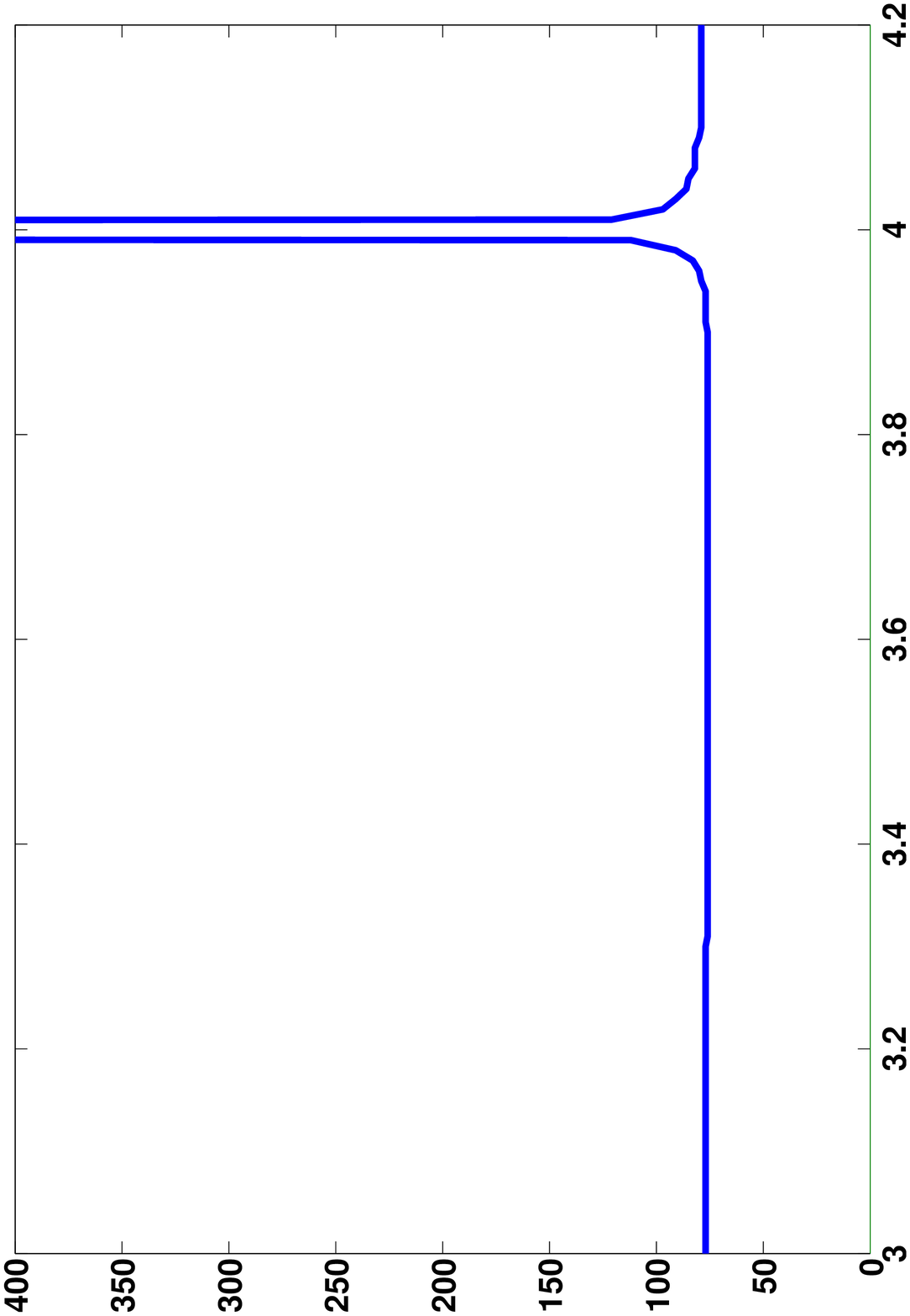}\\
  Number of iterations to solve \eqref{equation-testequation2} with Algorithm \ref{algorithm-4} with $\nu = 2$ depending on $3 \leq \lambda \leq 4.2$. 
  \end{minipage}
\end{figure}

As a final example of an ill-posed linear problem, we consider the well-known Fredholm integral equation of the first kind (see \cite[Example 12.4.1.]{DelvesMohamed}
\[ A f(s) = \int_0^1 k(s,t) f(t) dt = g(s),\qquad 0 \leq s \leq 1, \]
with the right hand side $g(s)$ and the kernel $k(s,t)$ given by
\[ g(s) = \frac{s^3-s}{6},\quad k(s,t) = \left\{ \begin{array}{ll} t(s-1) & 0\leq t < s \leq 1, \\ s(t-1) & 0\leq s \leq t \leq 1.
                                                             \end{array} \right. 
\]
The exact solution of this equation is given by the second derivative $g''(s) = s$. A discretized version of this integral equation based on the Galerkin method is included as test problem \texttt{deriv2} in the regularization toolbox of Hansen \cite{Hansen2007}. As a number of discretization points in \texttt{deriv2} we choose $N = 50$. Further, as in the previous test examples we disturb the right hand side $g$ by a vector $\eps w$ with $\eps = 0.01$. In Algorithm \ref{algorithm-4}, we choose $\omega = 96.5$ to guarantee $\omega \|A^* A\| < 1$ and stop the iteration according to the discrepancy principle if $\|A f_n - g^{\eps}\| < 4 \eps$ is satisfied. The different numbers of iterations in Algorithm \ref{algorithm-4} depending on the dilation parameter $\lambda$ are illustrated in Figure \ref{figure-5} and Table \ref{table-1}. 

\begin{figure}[H] 
\caption{Number of iteration steps of the co-dilated $\nu$-method to solve the test problem \texttt{deriv2} in the toolbox of Hansen \cite{Hansen2007}} \label{figure-5} 
  \begin{minipage}{0.5\textwidth}
  \centering
  \includegraphics[angle=-90, width=\textwidth]{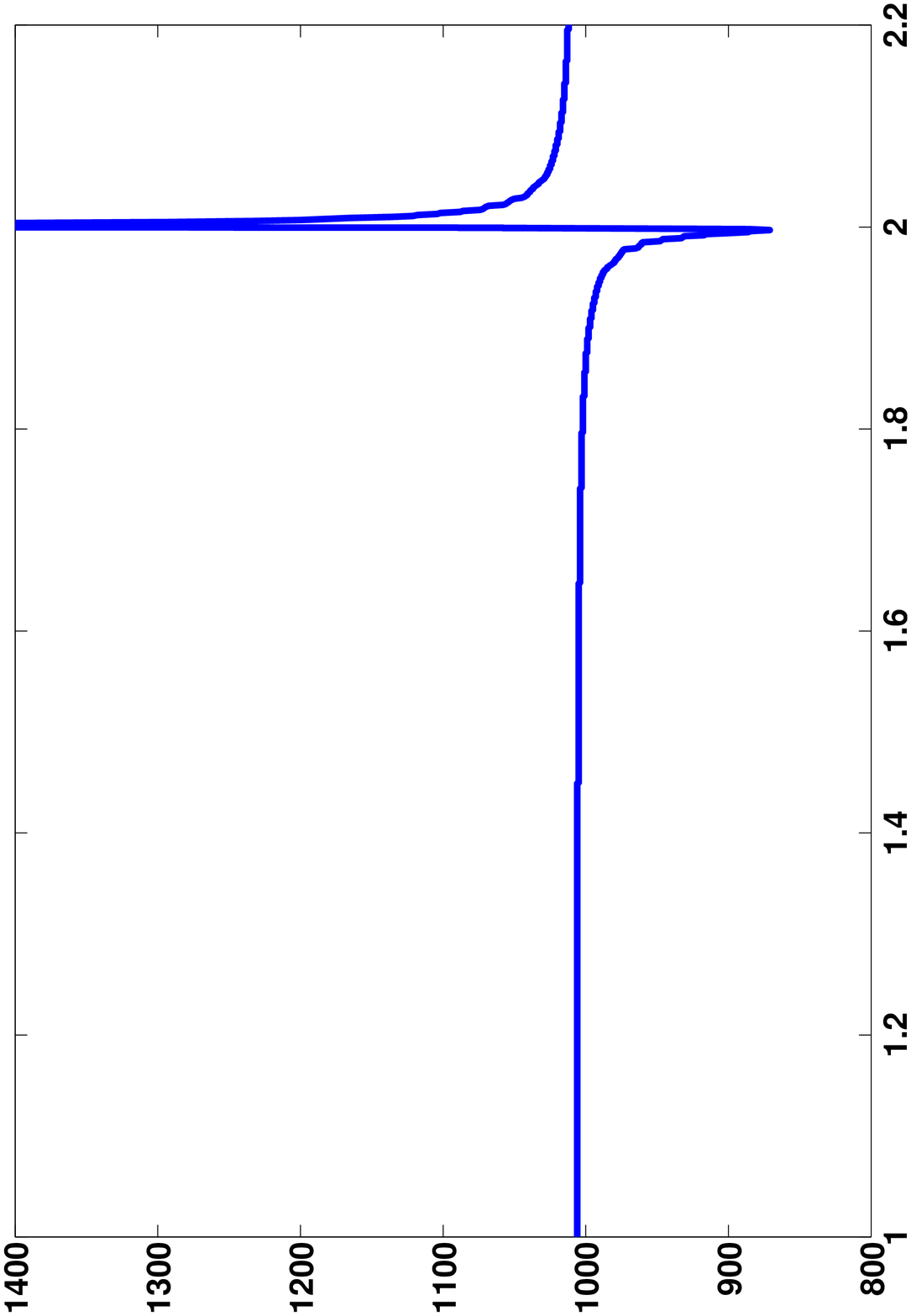}\\
  Number of iteration steps of the co-dilated $1$-method, depending on $1 \leq \lambda \leq 2.2$. 
  \end{minipage}\hfill
  \begin{minipage}{0.5\textwidth}
  \centering
  \includegraphics[angle=-90, width=\textwidth]{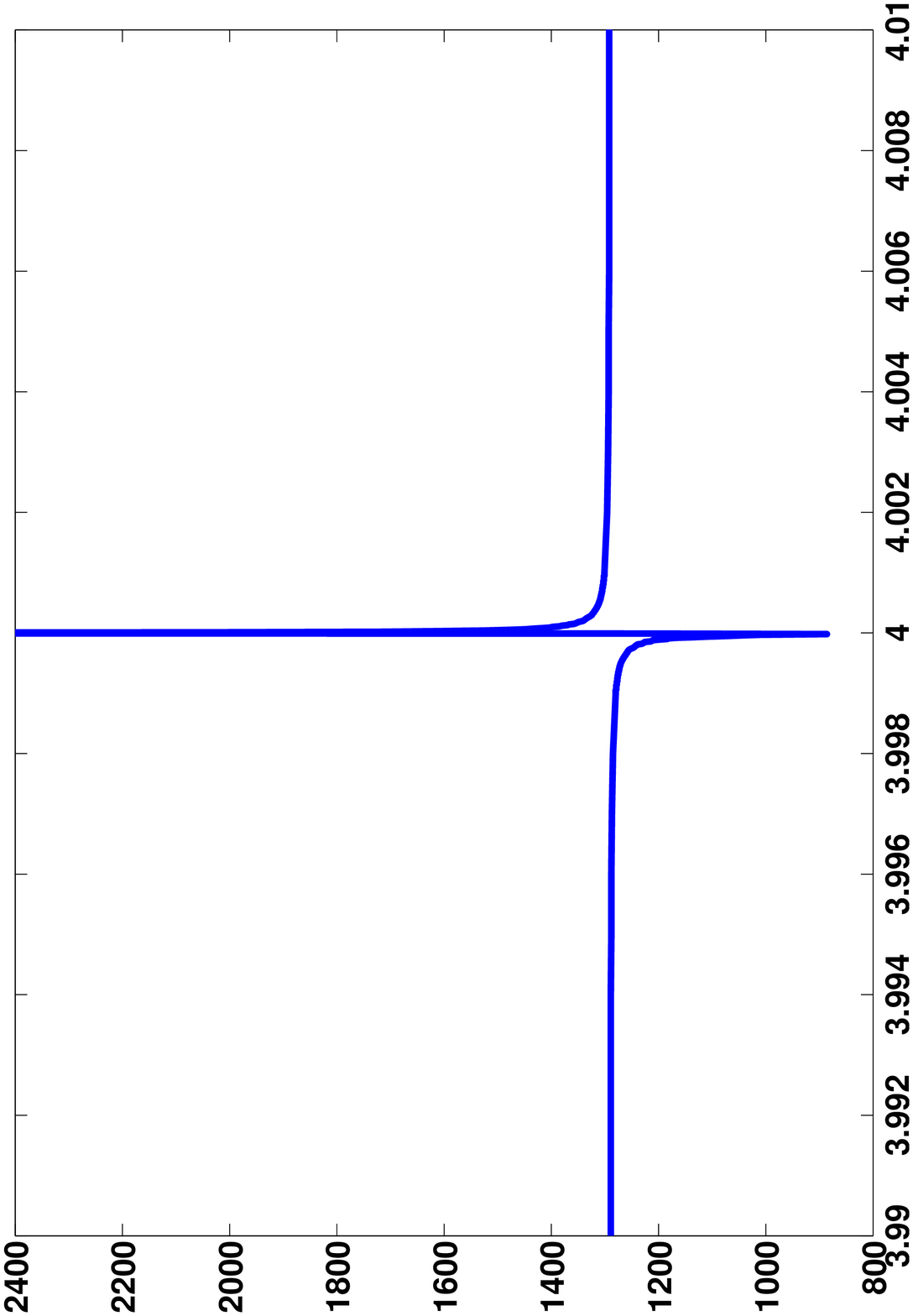}\\
  Number of iteration steps of the co-dilated $2$-method, depending on $3.99 \leq \lambda \leq 4.01$. 
  \end{minipage}
\end{figure}

\begin{table}[H] \caption{Convergence of different semi-iterative methods to solve the test problem \texttt{deriv2} 
}\label{table-1} 
\begin{center}
  \begin{tabular}[t]{|lc|lc|} \hline
    Semi-iterative method & Iteration steps &  Semi-iterative method & Iteration steps \\ \hline 
    $\nu = 1$, $\lambda = 0$ & $1007$ &  $\nu = 2$, $\lambda = 0$ & $1290$ \\
    $\nu = 1$, $\lambda = 0.5$ & $1007$ &  $\nu = 2$, $\lambda = 0.5$ & $1290$ \\
    \begin{minipage}{4cm}  $\nu = 1$, $\lambda = 1$ \\ \tiny $1$-method (Nemirovskii-Polyak) \end{minipage} & $1006$ & \begin{minipage}{4cm} $\nu = 2$, $\lambda = 1$ \\ \tiny $2$-method \end{minipage} & $1290$  \\[1mm]
    $\nu = 1$, $\lambda = 1.5$ & $1005$ &  $\nu = 2$, $\lambda = 3.9$ & $1290$ \\
    $\nu = 1$, $\lambda = 1.9$ & $998$ &  $\nu = 2$, $\lambda = 3.99$ & $1289$ \\
    $\nu = 1$, $\lambda = 1.99$ & $932$ &  $\nu = 2$, $\lambda = 3.999$ & $1280$ \\
    \begin{minipage}{4cm}  $\nu = 1$, $\lambda = 1.99716$ \\ \tiny Adaptive Algorithm \ref{algorithm-7} (optimal) \end{minipage} & $884$ &  $\nu = 2$, $\lambda = 3.9999$ & $1184$ \\
    $\nu = 1$, $\lambda = 1.9999$ & $1498$ &  $\nu = 2$, $\lambda = 3.99998$ & $886$ \\ \hline 
    cg-method & $23$ &  & \\ \hline 
    Landweber & $359379 $ &  & \\ \hline
    \end{tabular}
\end{center}
\end{table}

Figure \ref{figure-5} and Table \ref{table-1} illustrate that for the test problem \texttt{deriv2}, similar as for equation \eqref{equation-testequation1}, the total number of iteration steps of the co-dilated $\nu$-method gets significantly smaller if the parameter $\lambda$ approaches the critical value $2 \nu$. Also, if $\lambda$ is too close to $2 \nu$, we get very slow or no convergence of Algorithm \ref{algorithm-4}. For $\nu = 1$, the adaptive Algorithm \ref{algorithm-7} of the previous section gives the optimal parameter $\lambda = 1.99716$ after $n=884$ steps.

Further, Table \ref{table-1} shows that the $\nu$-methods and the co-dilated $\nu$-methods are significantly faster than the Landweber method. On the other hand, it is also visible that the cg-method outperforms
all semi-iterative methods in which the coefficients are a priori given. For a further comparison between the performance of the $\nu$-methods and the cg-iteration, we refer to \cite{Hanke1991}.

\section*{Acknowledgments}
I want to thank both referees very much for their excellent work. Their profound reviews and suggestions helped me a lot to improve this manuscript.

\end{document}